\documentclass[english]{amsart}
\usepackage[english]{babel}
\usepackage{bbm}
\usepackage{verbatim}

\usepackage{amsfonts,amssymb,mathtools}
\usepackage{amsmath}

\usepackage{xcolor}

\usepackage{graphicx,xcolor}
\graphicspath{{Fig/}}
\usepackage{float}
\usepackage{tikz}
\usepackage{hyperref}

\usepackage[style=numeric,maxcitenames=2,maxbibnames=10,doi=false,isbn=false,url=false,natbib=true,giveninits=true,hyperref,bibencoding=utf8,backend=biber]{biblatex}
\addbibresource{../ref.bib}
\addbibresource{../ref-bio.bib}

\AtEveryBibitem{%
  \clearfield{note}%
  \clearfield{issn}%
  \clearlist{language}%
  }

\newcommand{\N}{\ensuremath{\mathbb{N}}}
\newcommand{\R}{\ensuremath{\mathbb{R}}}

\newcommand{\Z}{\ensuremath{\mathbb{Z}}}
\newcommand{\E}{\ensuremath{\mathbb{E}}}
\renewcommand{\P}{\ensuremath{\mathbb{P}}}

\newcommand{\ind}[1]{\ensuremath{\mathbbm{1}_{\left\{#1\right\}}}}
\newcommand{\diff}{\mathop{}\mathopen{}\mathrm{d}}
\newcommand{\cal}[1]{\ensuremath{\mathcal{#1}}}
\newcommand\croc[1]{\left\langle #1\right\rangle}
\newcommand\steq[1]{\stackrel{\text{\rm #1.}}{=}}

\newcommand\Pois[1]{{\rm Pois}\left[#1\right]}
\newcommand\e[1]{e_{{\scriptscriptstyle{#1}}}}

\def\eps{\varepsilon}
\def\cadlag{c\`adl\`ag }

\def\krs{\kappa_{\!\scriptscriptstyle{R}\!\scriptscriptstyle{S}}}
\def\ksr{\kappa_{\!\scriptscriptstyle{S}\!\scriptscriptstyle{R}}}
\def\klr{\kappa_{\scriptscriptstyle{L}\!\scriptscriptstyle{R}}}
\def\kq0{\kappa_{\!\scriptscriptstyle{Q}\!\scriptscriptstyle{0}}}
\def\k0q{\kappa_{\!\scriptscriptstyle{0}\!\scriptscriptstyle{Q}}}
\def\kri{\kappa_{\!\scriptscriptstyle{R}\!\scriptscriptstyle{I}}}
\def\kil{\kappa_{\!\scriptscriptstyle{I}\!\scriptscriptstyle{L}}}
\def\kqu{\kappa_{\!\scriptscriptstyle{Q}\!\scriptscriptstyle{U}}}

\setcounter{tocdepth}{1}

\newtheorem{proposition}{Proposition}
\newtheorem{definition}[proposition]{Definition}

\newtheorem{theorem}[proposition]{Theorem}
\newtheorem{corollary}[proposition]{Corollary}

\title[Stochastic Models of Regulation]{Stochastic Models of Resource Allocation in Chemical Reaction Networks}

\date{\today}
\author[V. Fromion]{Vincent Fromion}
\email{Vincent.Fromion@inrae.fr}
\address[V.~Fromion, J. Zaherddine]{INRAE, MaIAGE, Université Paris-Saclay, Domaine de Vilvert, 78350 Jouy-en-Josas, France}
\author[Ph. Robert]{Philippe Robert}
\email{Philippe.Robert@inria.fr}
\urladdr{http://www-rocq.inria.fr/who/Philippe.Robert}
\author[J. Zaherddine]{Jana Zaherddine}
\email{Jana.Zaherddine@inria.fr}
\address[Ph.~Robert, J. Zaherddine]{INRIA Paris, 48, rue Barrault, CS 61534, 75647 Paris Cedex, France}

\keywords{Markov Processes;Scaling;Functional Law of Large Numbers;Stochastic Averaging Principle;Gene Expression}

\begin{document}
\maketitle

\begin{abstract}
This paper analyses of  a stochastic model of a chemical reaction network with three types of chemical species  ${\cal R}$, ${\cal M}$ and ${\cal U}$ that interact to transform a flow of external resources, the chemical species ${\cal Q}$, to produce a product, the chemical species ${\cal P}_r$. A regulation mechanism involving the sequestration of the chemical species ${\cal R}$ when the flow of resources is too low is investigated.  The  original motivation  of the study is of  analyzing the qualitative properties of  a key regulation mechanism of gene expression in biological cells, the {\em stringent response}. 

A scaling analysis of a Markov process in $\N^5$  representing the state of the chemical reaction network is achieved. It is shown that, depending on the parameters of the model, there are, quite surprisingly, three possible asymptotic regimes. To each of them corresponds a stochastic averaging principle with  a fast process expressed in terms of a network of $M/M/\infty$ queues. One of these regimes, the optimal sequestration regime, does not seem to have been identified up to now. Under this regime, the input flow of resources is low but the state of the network is still acceptable in terms of unused macro-molecules, showing  the remarkable efficiency of this regulation mechanism. The technical proofs of the main convergence results  rely on a combination of coupling arguments, technical estimates of the solutions of SDEs,  of sample paths of fast processes in particular, and the stability properties of some dynamical systems in $\R^2$. 
\end{abstract}

\maketitle

\bigskip

\hrule

\vspace{-3mm}

\tableofcontents

\vspace{-1cm}

\hrule

\section{Introduction}
This paper proposes a scaling analysis of  a stochastic model of a chemical reaction network which can be described as follows. See Section~\ref{Trans-Intro} for a detailed description.

There are two chemical species $R$-particles and $M$-particles which can form via pairing mechanisms a  complex ${\cal RM}_I$ which becomes after some steps a functional complex ${\cal RM}_L$.

There is an incoming flow of chemical species, $Q$-particles, resources, which can be paired to a chemical species, $U$-particles, defined as suppliers. Such a pair can be processed by a ${\cal RM}_L$-type complex in order to obtain the final product, a $P$-particle. At this point, the corresponding $R$-, $M$-, and $U$-particles are once again free in the environment.

Overall, the flow of $Q$-particles is used to create $P$-particles via a set of chemical reactions driven by random events. The regulation mechanism considered in this paper  involves the sequestration of $R$-particles. This control is designed to happen when the flow of $Q$-particles is too low, to avoid having too many unused complexes of the type ${\cal RM}_L$. 

The  initial motivation  of our study is of  investigating the qualitative properties of  an important regulation mechanism of gene expression in biological cells, the {\em stringent response}. See~\citet{Bouveret} and~\citet{Traxler}.  This type of chemical network can be used to describe several important processes of molecular biology as we will see. We first give a quick reminder of the gene expression context. 

\subsection{Gene Expression}
In bacterial cells, protein production is a central process. The production of macro-molecules, such as polymerases, ribosomes, and RNAs, as well as molecules like amino acids and energy sources, consumes most of cell resources. This enables the production of a diverse set of proteins with various functional properties.

It  can  represented as a two-step mechanism:

The {\em transcription step}.  Macro-molecules {\em polymerases} bind to  genes of DNA to produce  different types of RNAs: mRNAs, rRNAs, tRNAs, sRNAs, \ldots 

The {\em translation step}. A couple of macro-molecules $(R,M)$, where $R$ is a  {\em ribosome} and $M$ is an mRNA, may bind  to form a complex $RM_I$. After this {\em initiation phase}, the complex is transformed into a complex $RM_L$ for the {\em elongation phase}: A chain of amino-acids corresponding to the protein associated to this mRNA is sequentially built. Loosely speaking, an amino-acid $Q$, a biological brick of proteins, is paired  to an uncharged/free {\em transfer RNA} (tRNA),  $U$, to form a complex $UQ$, a charged tRNA. If the complex $RM_L$ is waiting for this type of amino-acid, the complex $UQ$ can be bound/processed by $RM_L$ so that $Q$ is added to the current chain of amino-acids being built. This procedure goes on  until the desired protein is completed and can be detached from the complex $RM_L$ and the complex itself breaks  into  $R$  and $M$ macro-molecules.  

\subsection*{Regulation Mechanisms}
The growth of a bacterial population in a given medium leads to an active consumption of resources necessary to produce new cells. When the concentrations of different resources in the medium  are high enough  for some time, the bacterium use them efficiently, via its complex regulatory system, to  reach a steady growth regime with a fixed  growth rate. 

When resources are scarce, a bacterium has to adapt,  to either exploit differently the available resources, or to do without some of them, as for example when some amino-acids are missing. For many classes of bacteria  resources maximizing the growth rate are used in priority. In the context of this adaptation, when resources are decaying,  a bacterial cell  has to decrease its growth rate and even  to ultimately stop its growth. For a global deterministic model of allocation in bacterial cells, see~\citet{GFS} and~\citet{GMCJ}. 

\begin{center}
\begin{figure}[H]
\resizebox{12cm}{!}{%
\begin{tikzpicture}[->,node distance=8mm]

\node[black, very thick,circle,draw](R_S) at (2,0){${\cal R}_S$};
\node[black, very thick,circle,draw](R_F) at (6,0){${\cal R}$};
\node[black, very thick, circle,draw](R_M) at (10.5,0){${\cal RM}_I$};
\node[black, very thick, circle,draw](R_1) at (14,0){${\cal RM}_L$};
\node[black, very thick, circle,draw](P) at (15,-2){$\,{\cal P}_r\,$};

    \node () [black,very thick,rectangle,draw]at (1.8,1){{\bf Sequestration }};
    \node () [black,very thick,rectangle,draw]at (5.8,1){{\bf Free }};
    \node () [black,very thick,rectangle,draw]at (10.3,1){{\bf Initiation }};
    \node () [black,very thick,rectangle,draw]at (13.9,1){{\bf Elongation }};
    \node () [black]at (0,-0){{\sc $R$-particles }};
    \node () [black]at (0,-0.4){{\rm\small{(Ribosomes)} }};

    \node () [black]at (16.5,-2){{\sc Product }};
    \node () [black]at (16.5,-2.4){{\rm\small{(Proteins)} }};
          
  \path (R_S) edge [black,very thick,left,midway,above] node {$\ksr S$}(R_F);
  \path (R_F) edge [black,very thick,bend left=45,midway,above] node {$\krs RU $} (R_S);
  \path (R_F) edge [black,very thick,left,midway,above] node {$\kri \scriptscriptstyle{(M^0_N{-}(N{-}R{-}S))R}$} (R_M);
  \path (R_M) edge [black,very thick,left,midway,above] node {$\kil \scriptscriptstyle{(N{-}R{-}S{-}L)}$} (R_1);
  \path (R_1) edge [black,very thick,bend left=45,pos=0.15,above left] node {$\klr L(U^0_N{-}U)$} (R_F);
  \path (R_1) edge [black,very thick,bend left=-70,below] node {} (P);

  \node[black, very thick, circle,draw](Q1) at (1,-4.5){${\cal Q}$};
  \node[black, very thick, circle,draw](0) at (5,-4.5){$\emptyset$};

    \path (Q1) edge [black,very thick,bend left=25,midway,above] node {$\kq0 Q$} (0);
  \path (0) edge [black,very thick,bend left=25,midway,below] node {$\k0q N $} (Q1);
  \node () [black]at (-1,-3.5){{\sc Resources }};
  \node () [black]at (-1,-3.9){{\rm\small{(Amino-Acids)} }};
  
  \node[black, very thick, circle,draw](0) at (9,-4.5){${\cal U}$};
  \node[black, very thick, circle,draw](T1) at (12,-4.5){${\cal UQ}$};
  \path (T1) edge [black,very thick,bend left=25,midway,below] node {$ \klr L(U^0_N{-}U)$} (0);
  \path (0) edge [black,very thick,bend left=25,midway,above] node {$\kqu UQ$} (T1);

  \node () [black]at (7,-3.5){{\sc Transporters }};
  \node () [black]at (7,-3.9){{\rm\small{(tRNAs)} }};

\end{tikzpicture}}
\caption{Transitions of Chemical Species}\label{Fig1}
\end{figure}
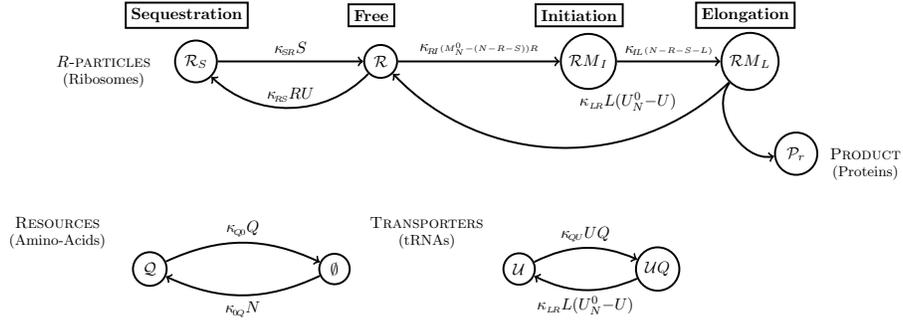
\end{center}

In~\citet{FRZ} we have analyzed a stochastic model of the regulation of the transcription phase. A macro-molecule, a 6S-RNA can sequester a subset of polymerases and therefore reduce the production of RNAs and consequently of proteins. 

In this paper, we are studying the regulation of the translation phase.  There are analogies with the regulation of transcription. There is also a sequestration mechanism (for ribosomes). But it is done via a more sophisticated system. We give a quick sketch of its basic principle. When a flow of amino-acids is too low, there will be many complexes of the type $RM_L$ waiting for this missing amino-acid. If this situation lasts, it may end-up  that  a ``wrong'' amino-acid  is  used instead,  leading to  an error for the composition of the protein and therefore, in some cases, a non-functional protein.

For this reason, the cell has to produce a maximal number of proteins to sustain its growth rate,  but at the same time the number of complexes $RM_L$ waiting for an amino-acid should not be too large. 

When there is a starvation of some types of amino-acids, a significant fraction of the corresponding tRNAs are uncharged, i.e. not paired with an amino-acid. Recall that they ``bring'' amino-acids to complexes of the type $RM_L$ in the elongation phase of proteins. The core of the regulation mechanism, {\em the stringent response}, is that the complex $RM_L$ can use them as a sensor of the lack of amino-acids in the cell. It can detect these uncharged tRNAs and initiate  a pathway leading to the production of  a specific metabolite, (p)ppGpp, that will inhibit/block ribosomes and, therefore, the creation of complexes of the type $RM_L$. The number of uncharged tRNAs is therefore the key quantity in this regulation mechanism.  See~\citet{Bai}, \citet{Traxler} and~\citet{Zhu} for example. 

This is, of course,  a simplification of the current consensus on the stringent response but, from a modeling perspective, this seems to be a reasonable representation. More biological details and further references are provided in Chapter~4 of~\citet{JanaPhD}. It it difficult to overstate the importance of the stringent response in the general economy of bacterial cells. Many types of bacterial cells have such a regulation, with variations, but with the same basic principles. It is a key component of the properties of adaptation of these cells to various environments. Recall that these organisms exist for almost four billions years. 

This chemical reaction context with chemical species, suppliers and resources with interactions via pairing mechanisms is quite common in molecular biology. The transcription step  can be described similarly, $R$-particles are polymerases, $M$-particles are genes and $U$-particles are nucleotids such as ATP or GTP and similarly for the duplication of DNA.  See~\citet{Watson}.

\subsection{A Chemical Reaction Network}\label{Trans-Intro}
We now give a more detailed presentation of the kinetics of this system in terms of  a chemical reaction network. See~\citet{Feinberg} for the general definitions and results on chemical reaction networks.  See Section~\ref{Model-Sec} for a detailed Markovian description.

There are $10$ chemical species,
\[
{\cal S}\steq{def} \{{\cal R}, {\cal R}_S, {\cal RM}_I, {\cal RM}_L, {\cal U}, {\cal UQ},  {\cal Q}, {\cal M}, {\cal P}, \emptyset\}.
\]
and the set of reaction rates is
\begin{equation}\label{RRates}
  {\cal K}\steq{def}\{\krs,\ksr,\klr,\kq0,\k0q,\kri,\kil,\kqu\}.
\end{equation}
All these constants are assumed to be positive. The set of chemical reactions  is given by
\[
\begin{cases}
  &{\cal R}+{\cal M} \xrightharpoonup{\kri} {\cal RM}_I \xrightharpoonup{\kil} {\cal RM}_L,\quad
  \emptyset \xrightleftharpoons[\kq0]{\k0q}  {\cal Q}, \quad   {\cal U} + {\cal Q}\xrightharpoonup{\kqu}  {\cal UQ},\\
    &{\cal R}+{\cal U} \xrightharpoonup{\krs} {\cal R}_S+{\cal U},\qquad
   {\cal R}_S\xrightharpoonup{\ksr} {\cal R}, \quad
                    {\cal RM}_L + {\cal UQ}\xrightharpoonup{\klr} {\cal U} +{\cal R}+{\cal M}+{\cal P}.
\end{cases}
\]
We detail each of these reactions. 
\begin{enumerate}
\item {\sc Pairings of $R$-Particles and $M$-Particles}.\\\label{ipair}
  In the case of the regulation of the translation step, an $R$-particle is a ribosome and an $M$-particle an mRNA. 
\begin{equation}\label{Pair-eq}
{\cal R}+{\cal M} \xrightharpoonup{\kri} {\cal RM}_I \xrightharpoonup{\kil} {\cal RM}_L
\end{equation}
The $R$-particles can be
\begin{itemize}
\item {\em Free},  the species ${\cal R}$;
\item {\em Paired} with a $M$-particle in two species, ${\cal RM}_I$ or ${\cal RM}_L$.\\ The species ${\cal RM}_I$ describes the complex after the pairing of a couple of  $M$ and $R$-particles during the {\em initiation phase}. After several steps, a complex of the species ${\cal RM}_L$ is obtained.  The complex has the functional properties to start the production of a $P$-particle via a  pairing with  a complex of species ${\cal UQ}$. See item~\eqref{iprod}.
\item {\em Sequestered}, species ${\cal R}_S$.  See item~\eqref{iseq}.
\end{itemize}

\medskip
\item {\sc Resources: $Q$-Particles}.\\
For  the regulation of the translation step, a $Q$-particle is an amino-acid. The reactions are 
\begin{equation}\label{Ress-eq}
  \emptyset \xrightleftharpoons[\kq0]{\k0q}  {\cal Q}.
\end{equation}
They describe the creation of these particles and their possible degradation. A $Q$-particle can be also used/consumed to produce a $P$-particle. See item~\eqref{iprod}. If the production rate $\k0q$  is large, the biological environment is favorable. The interesting situation is when it is not the case, this is where the regulation mechanism  is playing its role. 

\medskip
\item {\sc Suppliers: $U$-Particles}.\\
A $U$-particle is a tRNA in  the regulation of the translation step. Such a particle can be paired with a $Q$-particle, as follows,
  \[
    {\cal U} + {\cal Q}\xrightharpoonup{\kqu}  {\cal UQ}.
    \]
A $U$-particle can thus be free, i.e. the species ${\cal U}$, or paired, the species ${\cal UQ}$. 
\item {\sc Sequestration of $R$-Particles}. \label{iseq}\\
The regulation mechanism is represented by the following reactions,
  \[
  {\cal R}+{\cal U} \xrightharpoonup{\krs} {\cal R}_S+{\cal U}\qquad
   {\cal R}_S\xrightharpoonup{\ksr} {\cal R}.
   \]
The number of free $U$-particles is the key characteristic. Because of the kinetics based on the law of mass action, see Section~\ref{Model-Sec}, if the number of free $U$-particles is large, the $R$-particles will be often sequestered, i.e. in a state where they cannot contribute to the production of $P$-particles. A sequestered $Q$-particle becomes free after some time.

\medskip
  \item {\sc Elongation Phase: Production of $P$-Particles}.\label{iprod}
\begin{equation}\label{Prod-eq}
      {\cal RM}_L + {\cal UQ}\xrightharpoonup{\klr} {\cal U} +{\cal R}+{\cal M}+{\cal P}.
\end{equation}
Roughly speaking, a complex of species ${\cal UQ}$ ``carries'' a $Q$-particle to a complex ${\cal RM}_L$ to produce of $P$-particle (protein). 
\end{enumerate}
In the context of  the regulation of the translation step, Relation~\eqref{Prod-eq} indicates that a protein is created with one amino-acid.This is a caricature of the real system of course since the average length of a protein is around 300 amino-acids for {\it E. coli}. Nevertheless, from a modeling perspective, it does make sense. We believe that the main qualitative results of this paper, the existence of three regimes, still hold when one considers $J$ different types  $Q_1$, \ldots, $Q_K$ of amino-acids and  the distribution of the sequence of amino-acids of a protein is i.i.d. distribution with a random length. The dimension of the state space is much larger in this detailed model: $1{+}4J$ for a {\em minimal} model instead of $5$ for the model of this paper, which already requires a careful analysis. 

\subsection{The Three Regimes}\label{RegSec}
We give a quick and informal picture of the results we show in this paper. Precise statements of the main theorems are in Section~\ref{Model-Sec}. 

In the analysis of the regulation of transcription, see~\citet{FRZ}, we have shown the result that there are two regimes depending on the initiation rates of rRNAs, overloaded/under-loaded. For the regulation of the translation step, the situation is, quite surprisingly, more intricate since we will show that there are in fact three asymptotic regimes. 

Initially, the total number of $R$-particles (free, paired, sequestered)  is fixed and equal to $N$, this is the scaling parameter of our analysis. The input rate of $Q$-particles is assumed to be the order of $N$. There are other (natural) scaling assumptions which are detailed in  Section~\ref{Model-Sec}.

The state of the system at time $t{>}0$ is given by a vector of $\N^5$, $$X_N(t)\steq{def} (S_N(t),R_N(t),L_N(t),Q_N(t),U_N(t)),$$ for the respective numbers of copies of  the species ${\cal R}_S$, ${\cal R}$, ${\cal RM}_L$, ${\cal Q}$, ${\cal U}$. 

\begin{enumerate}
\item {\sc Stable Network}. If the input rate of $Q$-particles is above some threshold $\kappa_0$. The variables $(S_N(t),R_N(t),L_N(t),U_N(t))$ are $O(1)$. 
\item {\sc Under-Loaded Network}. When the input rate  of $Q$-particles is below $\kappa_0$, there is a condition ${\cal C}$, see Relation~\eqref{C-Cond} of Section~\ref{Model-Sec},  depending on the parameters of the system such that,
  \begin{enumerate}
\item[i)] if ${\cal C}$ holds, then the variables $(L_N(t),R_N(t),Q_N(t))$  are $O(1)$ and the process $(S_N(t)/N,U_N(t)/N)$ converges in distribution to a non-trivial limit. 
 \item[ii)]  if ${\cal C}$ does not hold then the variables $(R_N(t),Q_N(t))$  are $O(1)$ and the process $(L_N(t)/N,S_N(t)/N)$ converge in distribution to a non-trivial limit. 
  \end{enumerate}
\end{enumerate}
In case a),  when $Q$-particles are created they can be paired with $U$-particles of the network, the complexes of the species ${\cal RM}_L$  produce $P$-particles, and disappear, rapidly. When there are not sufficiently many $U$-particles, $(U_N(t))$ is O(1) in this regime, the $Q$-particles are degraded at some fixed rate. 

The case b)-i) is more interesting. This is the unexpected regime in fact but it is also a validation of the efficiency of this regulation mechanism. 
In this situation, the input rate of  $Q$-particles is not sufficient, nevertheless the regulation mechanism, via the sequestration, manages to keep the process $(L_N(t))$ of $R$-particles in elongation of the order of $O(1)$.  It turns out that in this case the regulation mechanism  is sequestering just the right number of $R$-particles so that most of resources are used to produce $P$-particles.  It is defined as the {\em optimal sequestration} regime. See Theorem~\ref{Stab-1}. 

In the case of  the regulation of the translation step, this means that if the flow of amino-acids is low, there will be few ribosomes in elongation, thereby reducing the probability of non-functional proteins. Given the adaptation capabilities of bacterial cells, it is very likely that Condition~{\cal C} holds in practice. It should be noted that Condition~{\cal C} has some intuitive explanation but this is not entirely clear to us since it involves the numerous parameters of our model. See  Theorem~\ref{Theo-Reg}. 

In the case b)-ii), the situation is not favorable from this point of view since the  number of $R$-particles in elongation of the order of $N$ and therefore prone to the production of ``wrong'' proteins.  

In a somewhat different context, mathematical models of the regulation of gene expression has been analyzed in a deterministic context essentially. This is the literature of regulatory gene networks. See \citet{Santillan}, \citet{Mestl} and~\citet{Hecker}.  See also the general references~\citet{Mackey} and Chapter~6 of~\citet{Bressloff}, and the references therein.  

\subsection{Stochastic Averaging Principles}
An asymptotic regime is determined in fact by the subset of coordinates of $(X_N(t))$ which are $O(1)$, the other coordinates being of the order of $N$.  See Definition~\ref{Order-Def} of Section~\ref{Order-Sec}. Each asymptotic regime corresponds to  a stochastic averaging principle. The coordinates of $(X_N(t))$ which are $O(1)$ are associated to a class of fast processes and the others are the slow components.

Early works on the proof of averaging principles are due to Has'minski\v{\i}. See~\cite{Khasminskii0,Khasminskii1}. Chapter~7 of~\citet{Freidlin} considers these questions in terms of the convergence of Cesaro averages of the fast component. \citet{Papanicolaou} has introduced a  stochastic calculus approach to these problems, mainly for diffusion processes. \citet{Kurtz} has developed this approach to jump processes.

Averaging principles have already  been used in various situations to study chemical reaction networks (CRNs), \citet{BallKurtz}, \citet{Kang},  \citet{Kim2017}, \citet{LR23,LR24}, \ldots It should be noted that for most of these examples of CRNs, the fast process is generally of dimension $1$, a birth and death process. The tightness of the associated occupation measure is generally proved with standard arguments. Its invariant distribution, which is crucial to express the asymptotic behavior of the slow component has an explicit expression.

In our case, the state space is of dimension five and the state space of the fast process is in general of dimension three. The proofs of the convergence in distribution of the sequence of occupation measures and of the slow process require a careful analysis. A significant difficulty of this context is of showing the  crucial property that if the coordinates of the  initial state has the ``correct'' orders of magnitude in the scaling parameter, then this property also holds on any finite time interval. This is not only a formal complication as it will be seen: Under the conditions of  Theorem~\ref{Stab-1}, this property is not true, one has to restrict the set of initial states in the convergence result for this reason. 

In a classical stochastic averaging setting such as in~\citet{Kurtz}, \citet{BallKurtz}, \citet{Kang},  the tightness/convergence  properties of fast processes are expressed in terms of their occupation measures which is sufficient in general to derive a limiting equation for the slow processes. Here,  we need a control on the orders of magnitudes of the {\em sample paths} of fast processes and not only their occupation measure. Several technical results, Propositions~\ref{MMIHit}, \ref{Prop0}, \ref{Prop1} are used for this purpose. We believe that this type of approach may be also a useful tool to prove convergence results for CRNs when more than two timescales are involved. 

The proofs of the main convergence results are achieved by using  these technical results on fast processes and via a sequence of steps involving
\begin{enumerate}
\item introduce a stopping time to stop the slow processes and the occupation measure so that the components of $(X_N(t))$ remain in the ``correct'' region of the state space. The main difficulty is of showing that this stopping time is not vanishing to $0$ as $N$ gets large. 
\item A coupling argument with an auxiliary process is used to prove tightness properties. It is a somewhat decoupled version of the initial process.
\item For each of the asymptotic regimes, the fast process is expressed in terms of a network of several $M/M/\infty$ queues. See Section~\ref{MMISec}. The invariant distribution of these networks has in fact an explicit expression. For one of them it is not expressed by a product form formula. See Proposition~\ref{FastInv}.
\item In Section~\ref{Reg-1-Sec}  a stability analysis of an asymptotically deterministic system to extend the convergence in distribution on an arbitrary time-interval.   This is a crucial ingredient in the proofs of Theorems~\ref{Stab-1-Max} and~\ref{Stab-1-Sat}. In the context of optimal sequestration, Theorem~\ref{Stab-1-Max}, there is a constraint on the initial state precisely for this reason. It is due to the fact that even if the system starts in the ``correct'' state space (for the orders of magnitudes of the coordinates), it may leave it before returning to it later. 
\end{enumerate}
The proof of Theorem~\ref{Stab-0} details each of these steps. 

\subsection{Organization of the Paper}
Section~\ref{Model-Sec} introduces the processes investigated and the associated definitions and notations. The main first order results are stated in this section. To stress the impact of the regulation, Section~\ref{Reg-0-Sec} investigates the CRN without a sequestration state for the $R$-particles.  There are two asymptotic regimes in this case. Section~\ref{Reg-1-Sec} investigates the general system with regulation. In this case there is an additional asymptotic regime, the optimal  sequestration regime.  Theorems~\ref{Stab-1-Max} and~\ref{Stab-1-Sat} of this section are the main convergence results of the paper. In addition, in each case, as a consequence of these results,  we establish a convergence result for the total number of $P$-particles produced. 
\section{Stochastic Model}\label{Model-Sec}

\subsection{Notations and Definitions}
If $H$ is a locally compact space ${\cal C}(H)$, resp. ${\cal C}_c(H)$, denotes the set real-valued continuous functions on $H$, resp. continuous functions with compact support on $H$. The set of Radon measures on $H$ is denoted by ${\cal M}(H)$, it is endowed with the topology of weak convergence. Finally ${\cal P}(H)$ is the space of probability distributions on $H$. See~\citet{Rudin}. 

A \cadlag process $(X(t))$ is a stochastic process whose sample paths are almost surely continuous on the right with left limit $X(s{-})$ at every point $s{>}0$.

\begin{definition}
If $(Z(t))$ is a \cadlag process on $H$, the {\em occupation measure} associated to $(Z(t))$ is the random measure $\Lambda$ on $\R_+{\times}H$ defined by
\[
\croc{\Lambda,f}=\int_0^{+\infty} f\left(t,Z(t)\right)\diff t,
\]
for $f{\in}{\cal C}_c(\R_+{\times}H)$
\end{definition}
See~\citet{Dawson} for a general presentation of random measures.

\subsection{Scaling Conditions}\label{ScalSec}
For $A{\in}\{R,M,U\}$, the total number of $A$-particles, paired or free is constant, it will be assumed that the corresponding constants $R^0_N$, $M^0_N$ and $U^0_N$,  are such that, $R^0_N{=}N$ and are asymptotically proportional to $N$, 
\begin{equation}\label{Scal}
\lim_{N\to+\infty}\left(\frac{M^0_N}{N},\frac{U^0_N}{N}\right)=(C_M,C_U),
\end{equation}
with $C_M{>}1$ and $C_U{>}0$. The condition  $C_M{>}1$ reflects the fact that there are much more $M$-particles that $R$-particles. This condition has in fact a biological motivation. See Chapter~4 of~\citet{JanaPhD}. 

It will be also assumed that  the input rate of resources, the $Q$-particles,  is also of the order of $N$.

\subsection{A Markovian Representation}

The state space of our system is 
\[
{\cal S}_N=\left\{x{=}(s,r,l,q,u){\in}\N^5:s{+}r{+}l{\le}N, u{\le}U^0_N\right\}.
\]
If $x{=}(s,r,l,q,u)$ is an element of ${\cal S}_N$,
\begin{enumerate}
\item $s$, $r$, $l$  are respectively the number of $R$-particles sequestered, free or in elongation (species ${\cal RM}_L$),
\begin{itemize}
\item  $N{-}r{-}s{-}l$ is the number of  $R$-particles in initiation (species ${\cal RM}_I$).
\item  $N{-}r{-}s$ is the number of  $R$-particles paired with an $M$-particle (species ${\cal RM}_I$ or ${\cal RM}_L$),
\item  $M^0_N{-}(N{-}r{-}s)$ is the number of free  (i.e. not paired) $M$-particles. 
\end{itemize}
\item $q$ is the number of free $Q$-particles;
 \item $u$ is the number of free $U$-particles, $U^0_N{-}u$ is the number of paired $Q$-particles. 
\end{enumerate}

The kinetics of our system is Markovian, driven by the {\em law of mass action}, see~\citet{Guldberg} and~\citet{Voit2015}.  For the reactions
\[
{\cal A} \xrightharpoonup{\kappa} {\cal C},\qquad 
{\cal A}+{\cal B} \xrightharpoonup{\kappa} {\cal C},
\]
with the chemical species ${\cal A}$, ${\cal B}$, ${\cal C}$, in state $(x_A,x_B,x_C)$, the number of copies of species  ${\cal C}$ grows at rate
$\kappa x_A$  in the first reaction and $\kappa x_Ax_B$ in the second one. 

\medskip

This gives that the  $Q$-matrix  ${\cal Q}_N$ of the associated Markov process of our model
\[
(X_N(t))=(X_{N,i}(t),i{\in}\{s,r,\ell,q,u\})=(S_N(t),R_N(t),L_N(t),Q_N(t),U_N(t))
\]
is defined by, for $x{=}(s,r,l,q,u)$, 
\begin{equation}\label{QMat}
x{\longrightarrow} x{+}
\begin{cases}
{\rm Jump}& {\rm Rate}\\
 \e{Q},& \k0q N,\\
 {-}\e{Q},&\kq0 q,\\
 {-}\e{U}{-}\e{Q},& \kqu uq.
\end{cases}\quad
x{\longrightarrow} x{+}
\begin{cases}
\e{R}{-}\e{L}{+}\e{U},&\klr (U^0_N{-}u)l,\\
\e{S}{-}\e{R},&\krs ru,\\
\e{R}{-}\e{S},&\ksr s,\\
{-}\e{R},&\kri(M^0_N{-}(N{-}r{-}s))r,\\
\e{L},&\kil (N{-}r{-}s{-}l),
\end{cases}
\end{equation}
where $\e{R}$, $\e{L}$, $\e{U}$, $\e{S}$ and $\e{Q}$ are the unit vectors of $\N^5$. 
 
\subsection{Occupation Measures and Orders of Magnitude}\label{Order-Sec}
For the sake of readability, we make a slight abuse of notations for the set of indices in the following, we will use the notation $\{s,r,\ell,q,u\}$ instead of $\{1,2,3,4,5\}$ and similarly for any subset.

If $I$ is a subset  of $\{s,r,\ell,q,u\}$, the random measure $\Lambda_{N,I}$ is the occupation measure associated to the process $(X_{N,i}(t),i{\in} I)$, i.e.
the random measure  on $\R_+{\times}\N^{|I|}$ defined by,
\[
\croc{\Lambda_{N,I},f}=\int_0^{+\infty} f\left(t,(X_{N,i}(t),i{\in} I)\right)\diff t,
\]
for $f{\in}{\cal C}_c(\R_+{\times}\N^{|I|})$. In some situations, the fifth coordinate  $(X_{N,u}(t)){=}(U_N(t))$ may be replaced by $(U^0_N{-}U_N(t))$ in the definition of $\Lambda_{N,I}$. 

The scaled process on the index set $I$ is defined by
\[
\left(\overline{X}_{N,I}(t)\right) \steq{def}\left(\frac{X_{N,i}(t)}{N},i{\in} I\right).
\]

We will study the asymptotic behavior of this model for a convenient $I$ when $N$ goes to  infinity. The scaling results obtained give the order of magnitude in $N$ of the chemical species,  proportional to $N$ or $O(1)$ essentially. Mathematically, this is expressed rigorously by the following definition.

The three regimes in this paper are identified by the subset of the variables which are $O(1)$, they are ``fast variables'', the other are ``slow'' variables, of the order of $N$. The convergence is expressed as follows. 
\begin{definition}\label{Order-Def}
For $I{\subset}\{s,r,\ell,q,u\}$, the coordinates of the process $(X_N(t))$ whose index is in $I$ are said to be  $O(1)$ and the others of the order of $N$ if 
\begin{itemize}
  \item The initial condition is such that, for any $N{\ge}1$, $X_{N,I}(0){=}(X_{N,i}(0),i{\in}I)$ is a fixed element of $\N^{|I|}$ and
\[
\lim_{N\to+\infty} \left(\frac{X_{N,i}(0)}{N},i{\not\in}I\right)=(x_i,i{\not\in}I)\in(0,{+}\infty)^{5-|I|}.
\]
\item   The sequence of  variables $((\overline{X}_{N,I^c}(t)), \Lambda_{N,I})$, with $I^c{=}\{s,r,\ell,q,u\}{\setminus}I$,  is converging in distribution to 
$((x(t)), \Lambda_\infty)$, where $(x(t))$ is a non-trivial continuous process and  $\Lambda_\infty$ is a random measure on $\R_+{\times}\N^{|I|}$, such that, almost surely,  $\Lambda_{\infty}(\diff x, \N^{|I|})$ is the Lebesgue measure on $\R_+$.
\end{itemize}
\end{definition}

\subsection{Stochastic Differential Equations}\label{SDE-Sec}
The Markov process $(X_N(t))$ is represented as a \cadlag process on $\N^5$, the unique solution of a stochastic differential equation (SDE).

The probability space is such that, for any element $\kappa$ of the set of reaction rates ${\cal K}$, see Relation~\eqref{RRates}, there is a Poisson process  ${\cal P}_\kappa$ on $\R_+^2$ with intensity measure $\kappa \diff s{\otimes}\diff t$ on $\R_+^2$. The Poisson processes ${\cal P}_\kappa$, $\kappa{\in}{\cal K}$ are assumed to be independent.  See Chapter~1 of~\citet{Robert}.

The associated filtration is $({\cal F}_t)$, where, for $t{\ge}0$,  ${\cal F}_t$ is the completed  $\sigma$-field generated by set of random variables
\begin{equation}\label{SDEFilt}
\left\{{\cal P}_r(A{\times}[0,s)), r{\in}{\cal K}, s{\le}t, A{\in}{\cal B}(\R_+)\right\}. 
\end{equation}
Throughout the paper, the notions of adapted, optional processes, of martingale and stopping time are implicitly with respect to this filtration. See~\citet{Rogers2} for general definitions  and results of stochastic calculus. 

For $a{\ge}0$ and $\kappa{\in}{\cal K}$, we will use  the differential notation, 
\[
{\cal P}_\kappa((0,a),\diff t) =\int_{s=0}^a{\cal P}_\kappa(\diff s,\diff t).
\]
Finally, throughout the paper, for $a{>}0$, $\Pois{a}$ denotes the Poisson distribution on $\N$ with parameter $a$. 

Let $(X_N(t))=(S_N(t),R_N(t),L_N(t),Q_N(t),U_N(t))$ be the unique solution of the following system of SDEs 
\begin{equation*}
  \diff S_N(t)={\cal P}_{\krs}\left(\left(\rule{0mm}{4mm}0,R_N(t{-})U_N(t{-})\right),\diff t\right) 
        {-}{\cal P}_{\ksr}\left(\left(\rule{0mm}{4mm}0,S_N(t{-})\right),\diff t\right),\phantom{aaaaaaa}
\end{equation*}
\begin{multline*}
  \diff R_N(t)={\cal P}_{\klr}\left(\left(\rule{0mm}{4mm}0,L_N(t{-})(U^0_N{-}U_N(t{-}))\right),\diff t\right) 
{+}{\cal P}_{\ksr}\left(\left(\rule{0mm}{4mm}0,S_N(t{-})\right),\diff t\right)\\
{-}{\cal P}_{\krs}\left(\left(\rule{0mm}{4mm}0,R_N(t{-})U_N(t{-})\right)\diff t\right)\\
{-}{\cal P}_{\kri}\left(\left(\rule{0mm}{4mm}0,(M^0_N{-}(N{-}R_N(t{-}){-}S_N(t{-})))R_N(t{-})\right),\diff t\right),
\end{multline*}
\begin{multline*}
  \diff L_N(t)={\cal P}_{\kil}\left(\left(\rule{0mm}{4mm}0,N{-}R_N(t{-}){-}S_N(t{-}){-}L_N(t{-})\right),\diff t\right) \\
{-}{\cal P}_{\klr}\left(\left(\rule{0mm}{4mm}0,L_N(t{-})(U^0_N{-}U_N(t{-}))\right),\diff t\right),
\end{multline*}
\begin{multline*}
  \diff Q_N(t)={\cal P}_{\k0q}\left(\left(\rule{0mm}{4mm}0,N\right),\diff t\right) 
{-}{\cal P}_{\kqu}\left(\left(\rule{0mm}{4mm}0,Q_N(t{-})U_N(t{-})\right),\diff t\right)\\
{-}{\cal P}_{\kq0}\left(\left(\rule{0mm}{4mm}0,Q_N(t{-})\right),\diff t\right),
\end{multline*}
\begin{multline*}
\diff U_N(t)={\cal P}_{\klr}\left(\left(\rule{0mm}{4mm}0,L_N(t{-})(U^0_N{-}U_N(t{-}))\right),\diff t\right)\\
{-}{\cal P}_{\kqu}\left(\left(\rule{0mm}{4mm}0,Q_N(t{-})U_N(t{-})\right),\diff t\right),
\end{multline*}
with initial point $(s,r,l,q,u){\in}{\cal S}_N$. 

\subsection*{Production Process}
Since the $P$-particles are created via Reaction~\eqref{Prod-eq},  with these notations,  the number $P_N(t)$ of $P$-particles produced up to time $t{\ge}0$, is therefore given by 
\begin{equation}\label{Prott}
  P_N(t)\steq{def}
  \int_{(0,t]}{\cal P}_{\klr}\left(\left(\rule{0mm}{4mm}0,L_N(s{-})(U^0_N{-}U_N(s{-}))\right),\diff s\right).
\end{equation}

The existence and uniqueness of the solution $(X_N(t))$ of this system of SDEs is standard by constructing by induction the sequence of the instants of jump and the state at these instants. Such a solution has the same distribution as the Markov process with $Q$-matrix ${\cal Q}_N$ and initial point $(s,r,l,q,u)$. It is easily checked by showing that it is the solution of the martingale problem associated to $Q_N$. See Section~IV.20 of~\citet{Rogers2} for example.

To simplify the mathematical model, it is tempting to merge the initiation and elongation phases,  into one phase.  It would reduce the dimension of the state space to $4$ instead of $5$. As a model of the regulation of gene expression, this is of course not accurate from a biological point of view. Mathematically, it can be shown that the associated Markov process of this reduced system does not exhibit the qualitative properties described in Section~\ref{RegSec}: When the network is under-loaded, the number of complexes $RM_L$, the process $(L_N(t))$, is always of the order of $N$ so that Regime~(b)~i) does not exist.  This would imply that for a biological cell under stress a significant fraction of proteins would exhibit errors, which is unlikely biologically.

\subsection*{The Use of Poisson Processes on $\R_+^2$ in the SDEs}  If $(\lambda(t))$ is a \cadlag adapted process, a Poisson point process with intensity $(\lambda(t))$ can be represented in two ways:
\begin{enumerate}
\item Following Kurtz, see~\citet{Ethier}, if ${\cal N}$ is a Poisson point process with rate $1$ on $\R_+$, the counting  measure of such a point process can be expressed as
  \[
({\cal A}(t))\steq{def}  \left({\cal N}\left(0,\int_0^t\lambda(s)\diff s\right)\right).
  \]
\item In our paper we take the representation
  \[
({\cal B}(t))\steq{def}    \left(\int_0^t{\cal P}((0,\lambda(s-))),\diff s\right),
  \]
  where ${\cal P}$ is a Poisson process on $\R_+^2$ with intensity measure $\diff s{\otimes}\diff t$. 
\end{enumerate}
It is not difficult to see that $({\cal A}(t))$ and $({\cal B}(t))$ have the same distribution.

It should be noted that the filtration $({\cal F}_t)$ we have defined is dependent of the process $(\lambda(t))$.
 A natural filtration for $({\cal A}(t))$ would, a priori, depend on  $(\lambda(t))$.  When  coupling  constructions are considered, there may be  different such processes $(\lambda(t))$, with  a common  driving Poisson process. The definition of the filtration, which is crucial for martingale, stopping time properties, is not impossible in this case, but may be quite cumbersome to define properly. 

 \subsection{The Different Asymptotic Regimes}
To stress the impact of the regulation mechanism, we have also analyzed in Section~\ref{Reg-0-Sec} the model without the regulation, i.e. when $R$-particles cannot be sequestered. The regulation mechanism is analyzed in Section~\ref{Reg-1-Sec}.

 \subsubsection{System without Regulation}\ \\
The state space in this case is  
\[
{\cal S}_N=\left\{x{=}(r,\ell,q,u){\in}\N^4:r{+}\ell{\le}N, u{\le}U^0_N\right\},
\]
and the Markov process has four coordinates
 \[
(X_N(t))=(R_N(t),L_N(t),Q_N(t),U_N(t)). 
 \]

{\em A heuristic picture.} In view of the discussion of the introduction, a desired state is that the processes of $R$-particles in elongation $(L_N(t))$ and the process of the number of  free $U$-particles $(U_N(t))$ should be $O(1)$. Since $C_M{>}1$, the transition from ${\cal R}$ to ${\cal RM}_I$ is quick, so the same property holds for $(R_N(t))$.  Consequently, the coordinates $(R_N(t),L_N(t),U_N(t))$ should be $O(1)$. This implies that all, but a finite number, of the $N$  $R$-particles are in the species ${\cal RM}_I$. Similarly, the arriving $Q$-particles are quickly paired with a $U$-particle. The arrival rate of $Q$-particles is $\k0q$ and their consumption rate, via the production of $P$-particles is of the order of $\kil N$. Hence if $\k0q{>}\kil$, there will be an excess of $Q$-particles, so that the process   $(L_N(t))$ of the number of  proteins in elongation is $O(1)$ with high probability on finite time intervals, which is desirable to avoid the production of non-functional proteins in the case of gene expression. 

Our main results are summarized in the following theorem. A more precise version of these results are given by Propositions~\ref{Stab-0} and~\ref{Sub-0} of Section~\ref{Reg-0-Sec}.
\begin{theorem}\label{Reg-0-Th}
  Under the scaling conditions~\eqref{Scal},
 \begin{enumerate}
\item\label{item-a}  if $\k0q{>}\kil $, and the initial state is $x_N(0)=(r,\ell,q_N,u){\in}{\cal S}_N$ and
  \[
  \lim_{N\to+\infty} \frac{q_N}{N}=q_\infty{>}0,
  \]
The coordinates $(R_N(t),L_N(t),U_N(t))$ are $O(1)$ and the convergence in distribution 
  \[
  \lim_{N\to+\infty} \left(\frac{Q_N(t)}{N}\right)=(q(t)),
  \]
  holds, where $(q(t))$ is the solution of the ODE,
  \[
  \dot{q}(t)=\k0q -\kil +\kq0 q(t),
  \]
  starting at $q_\infty$. 
\item 
  If $\k0q{<}\kil$ and  $x_N(0){=}(r,\ell_N,q,U^0_N{-}u)$, with $r$, $q$, $u{\in}\N$ and
  \[
  \lim_{N\to+\infty} \frac{\ell_N}{N}=\ell_0{\in}(0,1),
  \]
  The coordinates $(R_N(t),Q_N(t),U_{N}^0{-}U_N(t))$  are $O(1)$ and the convergence in distribution
  \[
  \lim_{N\to+\infty} \left(\frac{L_N(t)}{N}\right)=(\ell(t)),
  \]
  where $(\ell(t))$ is the solution of the ODE,
\begin{equation}\label{eq-ell}
  \dot{\ell}(t)=\kil-\k0q-\kil\ell(t),  
\end{equation}
  starting at $\ell_0$.
 \end{enumerate}
 \end{theorem}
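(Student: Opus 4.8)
The plan is to treat each case as a two-timescale averaging problem in which a single coordinate is slow (of order $N$) and the remaining three are fast ($O(1)$), the fast coordinates reaching, at the $N$-scale, the equilibrium of a network of $M/M/\infty$ queues obtained by freezing the slow coordinate. For the system without regulation one specialises the SDEs of Section~\ref{SDE-Sec} by setting $S_N\equiv 0$, so that the number of ${\cal RM}_I$ complexes is $N{-}R_N{-}L_N$ and the number of free $M$-particles is $M^0_N{-}(N{-}R_N)$. In case (a), where $\k0q{>}\kil$, the slow coordinate is $\overline Q_N{=}Q_N/N$ and the fast triple is $(R_N,L_N,U_N)$; in case (b), where $\k0q{<}\kil$, the slow coordinate is $\overline L_N{=}L_N/N$ and the fast triple is $(R_N,Q_N,U^0_N{-}U_N)$. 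In each case I would write the SDE of the slow coordinate, divide by $N$, and observe that its martingale part has predictable quadratic variation $O(1/N)$ and hence vanishes; the only term that does not close is a bilinear rate that must be averaged against the law of the fast coordinates, namely the pairing flux $\kqu Q_N U_N$ in the $\diff Q_N$ equation (case a) and the production flux $\klr L_N(U^0_N{-}U_N)$ in the $\diff L_N$ equation (case b).

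Next I would identify the fast dynamics with the slow coordinate frozen. Using \eqref{Scal}, the large factors linearise: in case (a), $M^0_N{-}(N{-}R_N)\sim(C_M{-}1)N$ and $U^0_N{-}U_N\sim C_U N$, so $(R_N,L_N,U_N)$ is a system of coupled infinite-server queues, with $L_N$ fed at rate $\sim\kil N$ and depleted at rate $\klr C_U N\,L_N$, each production event simultaneously removing one $L$-customer and adding one $R$- and one $U$-customer, while $R_N$ and $U_N$ are depleted at the linear rates $\kri(C_M{-}1)N\,R_N$ and $\kqu qN\,U_N$. Only the stationary means enter the averaging, and these are fixed by the linear flow-balance equations: $\klr C_U\,\E_\pi(L_N){=}\kil$ and $\kqu q\,\E_\pi(U_N){=}\klr C_U\,\E_\pi(L_N)$, whence $\kqu q\,\E_\pi(U_N){=}\kil$ and the scaled pairing flux averages to $\kil$. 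The averaged drift of $\overline Q_N$ is therefore $\k0q{-}\kil{-}\kq0 q$, with the positive, globally stable equilibrium $(\k0q{-}\kil)/\kq0$ that exists precisely because $\k0q{>}\kil$. The same scheme in case (b), where now $U_N\sim C_U N$ while $Q_N$ and $U^0_N{-}U_N$ are $O(1)$, gives $\E_\pi(\kqu Q_N U_N)\to\k0q N$ (the $Q$-throughput, the degradation $\kq0 Q_N$ being $O(1)$ and negligible), hence the production flux averages to $\k0q$ and the drift of $\overline L_N$ to $\kil(1{-}\ell){-}\k0q$, which is exactly \eqref{eq-ell}.

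The genuinely hard step, emphasised in the introduction, is to prove that the fast coordinates \emph{stay} $O(1)$ on a fixed interval $[0,t]$, since this is what makes the above linearisations uniformly valid and gives meaning to the frozen invariant measure $\pi_{x(s)}$. I would introduce the stopping time $\tau_N^A$ at which a fast coordinate first exceeds a level $A$ or the slow coordinate first leaves a neighbourhood of its fluid value, and on $[0,\tau_N^A]$ compare each fast coordinate, by a coupling with a decoupled auxiliary process, to an ergodic $M/M/\infty$ queue whose arrival rate is a constant multiple of $N$ and whose per-customer service rate is a constant multiple of $N$; such a queue has an explicit Poisson law with $O(1)$ mean and well-understood excursions, so hitting-time estimates for it yield $\limsup_N\P(\tau_N^A\le t)\to 0$ as $A\to\infty$. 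The subtlety is that the fast coordinates are coupled to one another (a single production event moves three of them), so the dominating queues cannot be read off coordinate by coordinate; the decoupled auxiliary process is precisely the device that breaks this coupling while retaining a stochastic comparison, and controlling it is where the real work lies.

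Finally, with the fast coordinates confined and the occupation measure $\Lambda_{N,I}$ tight, its time-marginal being Lebesgue measure by Definition~\ref{Order-Def}, I would extract a limit $(\overline X_{N,I^c},\Lambda_{N,I})\to(x,\Lambda_\infty)$ along a subsequence, identify $\Lambda_\infty(\diff s,\diff y){=}\diff s\,\pi_{x(s)}(\diff y)$ by passing to the limit in the martingale characterisation of the frozen fast generator, and then pass to the limit in the slow SDE: integrating the bilinear flux against $\pi_{x(s)}$ replaces it by its averaged value, yielding the closed scalar ODE. Since that ODE has a unique solution, the whole sequence converges and $x$ is the announced trajectory; the global stability of the scalar ODEs towards $(\k0q{-}\kil)/\kq0$ and $1{-}\k0q/\kil$ then confirms a posteriori that the fast coordinates remain $O(1)$ for all $t$.
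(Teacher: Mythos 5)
Your overall architecture --- stopping time, coupling with a decoupled dominating process, tightness of the occupation measure, identification of the frozen fast dynamics as a network of $M/M/\infty$ queues, averaging of the bilinear flux, and iteration using the stability of the scalar ODE --- is exactly the strategy the paper implements in Theorems~\ref{Stab-0} and~\ref{Sub-0}. One genuine difference is worth noting: you extract the averaged drift from the linear flow-balance (traffic) equations for the stationary means of the fast network, which bypasses the explicit, non-product-form invariant law that the paper computes in Proposition~\ref{FastInv}. For the summary statement at hand (only the slow ODE and the $O(1)$ claim in the sense of Definition~\ref{Order-Def}) this is a legitimate shortcut, provided you also justify that the first moments of the invariant law are finite; the paper does this with a Lyapunov function and a truncation argument in the proof of Proposition~\ref{Prop0}.

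The confinement step, however, fails as you state it. You stop at the first time a fast coordinate exceeds a \emph{fixed} level $A$ and claim that $\limsup_N\P(\tau_N^A\le t)\to 0$ as $A\to\infty$. For every fixed $A$ this probability tends to $1$ as $N\to\infty$: the fast coordinates are positive recurrent Markov processes run at speed $N$, so on a macroscopic interval $[0,t]$ they make of the order of $Nt$ jumps and exceed any fixed level with probability tending to one (the running maximum of an $M/M/\infty$ queue over such an interval grows like $\log N/\log\log N$). Hence $\lim_{A\to\infty}\limsup_N\P(\tau_N^A\le t)=1$, not $0$, and the region on which your stochastic comparison is valid is lost. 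The cure, which is what the paper does, is to let the barrier grow with $N$: the stopping time is of the form $\tau_N=\inf\{t: Q_N(t)\le\eta N\text{ or }U_N(t)\ge\eta N\}$, and Proposition~\ref{MMIHit}(a) (the hitting time of level $N^\delta$ by an ergodic $M/M/\infty$ queue is of order $(N^\delta-1)!$, hence beats any polynomial timescale) shows the dominating queues do not reach level $N^\delta$, a fortiori $\eta N$, before time $t$. The assertion that the fast coordinates are $O(1)$ must then be read in the occupation-measure sense of Definition~\ref{Order-Def}, not as a uniform sup bound at a fixed level. With that correction the rest of your argument (vanishing martingales, tightness via Kurtz's Lemma~1.3, identification of the limiting occupation measure, and the induction over $[kt_0,(k+1)t_0]$ using that the scalar ODE keeps the slow variable away from the critical boundary) goes through as in the paper.
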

Note that the equilibrium point of the ODE~\eqref{eq-ell} is $\eta{\steq{def}}1{-}\k0q/\kil{>}0$, the fraction of $R$-particles in elongation is greater than $(\ell_0\wedge\eta)/2$ with high probability on any finite time interval. Hence when  $\k0q{<}\kil$, the system has a large number $R$-particles in elongation. In the context of gene expression, this implies that non-functional proteins will be produced on a regular basis. 
 \subsubsection{System with Regulation}

Our main results on the regulation are summarized in the following theorem. A more detailed version of these results, with the explicit expressions of the corresponding limiting occupation measures,  is given by  Propositions~\ref{Stab-1-Sat} and~\ref{Stab-1-Max}  of Section~\ref{Reg-1-Sec}.
\begin{theorem}\label{Theo-Reg}
Under the scaling conditions~\eqref{Scal},
\begin{enumerate}
\item   If $\k0q{>}\kil$, the assertions of \eqref{item-a} of Theorem~\ref{Reg-0-Th} hold and the coordinates $(S_N(t),R_N(t),L_N(t),U_N(t))$ are $O(1)$. 
\item  If $\k0q{<}\kil$ and
\begin{equation}\label{C-Cond}
\frac{\ksr}{\krs }\frac{\kri }{\k0q}\left(C_M{-}\frac{\k0q}{\kil}\right)\left(1{-}\frac{\k0q}{\kil}\right) < C_U
\end{equation}
and if  $x_N(0){=}(s_N,r,\ell,q,u_N)$, with $r$, $q$, $\ell{\in}\N$ and
  \[
  \lim_{N\to+\infty} \left({s_N}/{N},{u_N}/{N}\right)=(s_0,u_0) \in V_0.
  \]
where $V_0$ is an open subset of $(0,1){\times}(0,C_U)$, then  $(R_N(t),L_N(t),Q_N(t))$ is $O(1)$ and, for the convergence in distribution, the relation
  \[
  \lim_{N\to+\infty} \left({S_N(t)}/{N},{U_N(t)}/{N}\right)=(s(t),u(t)),
  \]
holds,  where $(s(t), u(t))$ is the solution of the ODE,
\begin{align*}  
\dot{u}(t)&{=}\kil(1{-}s(t)){-}\k0q\\
\dot{s}(t)&{=}\krs u(t)\frac{\kil(1{-}s(t)){+}\ksr s(t)}{\kri (C_M{-}(1{-}s(t))){+}\krs u(t)}{-}\ksr s(t),
\end{align*}
starting at $(s_0,u_0)$.
\item   If $\k0q{<}\kil$ and 
\begin{equation}\label{D-Cond}
\frac{\ksr}{\krs }\frac{\kri }{\k0q}\left(C_M{-}\frac{\k0q}{\kil}\right)\left(1{-}\frac{\k0q}{\kil}\right) > C_U
\end{equation}
hold, and if  $x_N(0){=}(s_N,r,\ell_N,q,U^0_N{-}u)$, with $r$, $q$, $u{\in}\N$ and
  \[
  \lim_{N\to+\infty} \left(\frac{s_N}{N},\frac{\ell_N}{N}\right)=(s_0,\ell_0) \text{ with } \ell_0, s_0{>}0 \text{ and }s_0{+}\ell_0{<}1.
  \]
then the coordinates   $(R_N(t),Q_N(t),U^0_N{-}U_N(t))$ are $O(1)$ and, for the convergence in distribution, the relation 
  \[
  \lim_{N\to+\infty} \left(\left(\frac{S_N(t)}{N},\frac{L_N(t)}{N}\right)\right)=(s(t),\ell(t)),
  \]
  holds,   where $(s(t), \ell(t))$ is the solution of the ODE,
\begin{align*}  
\dot{\ell}(t)&{=}\kil(1{-}\ell(t){-}s(t))-\k0q,\\
\dot{s}(t)&{=}\krs C_U\frac{\k0q{+}\ksr s(t)}{\kri (C_M{-}1{+}s(t){+}\ell(t))){+}\krs C_U}{-}\ksr s(t)
\end{align*}
starting at $(s_0,\ell_0)$. 
\end{enumerate}
\end{theorem}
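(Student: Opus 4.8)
The three assertions share a common two-timescale structure, so the plan is to set up a single stochastic-averaging scheme and instantiate it in each regime. Assertion~(1) is essentially assertion~\eqref{item-a} of Theorem~\ref{Reg-0-Th}: when $\k0q{>}\kil$ resources are in excess, so sequestration is a negligible perturbation. I would first check that adjoining the coordinate $(S_N(t))$ does not change the leading-order picture — since $(U_N(t))$ is $O(1)$ in that regime, the sequestration rate $\krs R_N(t)U_N(t)$ is $O(1)$, and $(S_N(t))$, driven by an $O(1)$ birth rate and the linear death rate $\ksr S_N(t)$, stays $O(1)$ by comparison with a birth–death process — and then transport the convergence of $(Q_N(t)/N)$ verbatim from Theorem~\ref{Reg-0-Th}. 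The real content is in assertions~(2) and~(3).

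For each of~(2) and~(3) I would identify the slow coordinates (those of order $N$: $(S_N,U_N)$ under \eqref{C-Cond}, and $(S_N,L_N)$ under \eqref{D-Cond}) and the fast coordinates (those $O(1)$: $(R_N,L_N,Q_N)$, resp. $(R_N,Q_N,U^0_N{-}U_N)$), and proceed in the four steps sketched in the introduction. Step one: define a stopping time $\tau_N$ at which one coordinate first leaves the region of expected orders of magnitude (a fast coordinate exceeds a fixed level, or a scaled slow coordinate leaves a compact neighbourhood of the candidate ODE trajectory), and work with the processes stopped at $\tau_N$. Step two: prove tightness. The coordinates scaled by $N$ are handled by writing their SDEs, isolating the compensator and controlling the martingale parts with the Poisson estimates; tightness of the occupation measures $\Lambda_{N,I}$ of the fast coordinates is obtained, as announced, by a coupling with an auxiliary, decoupled process in which the slow coordinates are frozen at their current scaled values, turning the fast dynamics into a genuine network of $M/M/\infty$ queues whose occupation measure is easy to control.

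Step three identifies the limit. On the fast timescale the slow variables are constant, and for fixed $(s,u)$ (resp. $(s,\ell)$) the fast coordinates become a feed-forward network of $M/M/\infty$ queues: in regime~(2) a source feeds $L$ at rate $\kil(1{-}s)N$, whose departures feed $R$ together with the desequestration input $\ksr sN$, while $Q$ is an independent queue with input $\k0q N$ and per-customer service $\kqu uN$; in regime~(3) the chain is $Q\to\mathcal{UQ}\to R$. Its invariant distribution is computed explicitly (Proposition~\ref{FastInv}), and need not be of product form. I would then pass to the limit in the semimartingale decomposition of the scaled slow coordinates, replacing the fast-dependent instantaneous rates by their averages against this invariant measure; this reproduces exactly the stated drifts. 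For instance the $\dot u$-equation balances the averaged production of free $U$, which equals $\klr\E[L](C_U{-}u){=}\kil(1{-}s)$, against consumption $\kqu\E[Q]u{=}\k0q$, and the $\dot s$-equation balances sequestration against desequestration with the free-ribosome factor replaced by the stationary mean $\E[R]{=}(\kil(1{-}s){+}\ksr s)/(\kri(C_M{-}(1{-}s)){+}\krs u)$.

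The main obstacle is the last step: showing that $\tau_N$ does not collapse to $0$, i.e. that the correct orders of magnitude persist on every finite interval, and then removing the stopping. This is where the stability analysis of the limiting planar ODE enters. Under \eqref{C-Cond} the subtlety flagged in the introduction appears — the system may transiently leave the region where $(R_N,L_N,Q_N)$ are $O(1)$ before sequestration pulls it back — which is precisely why the initial condition must be restricted to the open set $V_0$ on which the ODE trajectory stays admissible. I would therefore use Proposition~\ref{MMIHit} and the sample-path controls of Propositions~\ref{Prop0} and~\ref{Prop1} to bound the excursions of the fast \emph{sample paths}, not merely their occupation measure, show by an invariance/Lyapunov argument that the ODE started in $V_0$ keeps $(s(t),u(t))$ (resp. $(s(t),\ell(t))$) in the relevant region, and conclude by a bootstrap that $\liminf_N \tau_N$ is bounded below, so the averaging limit extends to all of $[0,T]$. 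Pinning down $V_0$ and controlling these fast excursions is the part I expect to be genuinely delicate.
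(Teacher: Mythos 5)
Your proposal follows essentially the same route as the paper: Theorem~\ref{Theo-Reg} is established there by combining Theorems~\ref{Stab-1}, \ref{Stab-1-Max} and~\ref{Stab-1-Sat}, each proved exactly by your scheme of stopping time, coupling with a decoupled auxiliary process, convergence of the occupation measure of the fast $M/M/\infty$ network, averaging in the semimartingale decomposition of the slow coordinates (your drift computations match the paper's), and stability of the limiting planar ODE to bootstrap the convergence from $[0,t_0]$ to $[0,T]$, with the restriction to $V_0$ entering precisely where you place it. The only slight misplacement is the appeal to Proposition~\ref{FastInv}: in regimes~(2) and~(3) the fast networks are feed-forward and their invariant laws are products of Poisson distributions, while the non-product-form invariant distribution of Proposition~\ref{FastInv} is needed only for the stable regime of part~(1), where a departure from the $L$-queue creates simultaneous arrivals at $R$ and $U$.
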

In case (b), despite that there are not sufficiently many $Q$-particles, the number of $R$-particles in elongation remains finite in contrast with (b) of Theorem~\ref{Reg-0-Th} when there is no regulation. The $R$-particles are in someway put aside by the sequestration, thereby showing the efficiency of this scheme.  There are limitations to the sequestration mechanism nevertheless, since this property of stability does not hold anymore when Condition~\eqref{C-Cond} is not valid. 

\subsection{The $M/M/\infty$ process}\label{MMISec}
We introduce a classical birth and death process, the $M/N/\infty$ queue, technical results for this process play an important role in the proof of our convergence results.  See Chapter~6 of~\citet{Robert}. 
\begin{definition}
The ${M/M/\infty}$ queue with input rate $\lambda{\ge}0$ and service rate $\mu{>}0$ is a  Markov process  on $\N$ with transition rates, for $x{\in}\N$,
\[
x\longrightarrow
\begin{cases}
x{+}1&   \lambda,\\
x{-}1&   \mu x.
\end{cases}
\]
\end{definition}
Its invariant distribution is $\Pois{\rho}$, a Poisson distribution with parameter $\rho{=}\lambda/\mu$.
\begin{center}
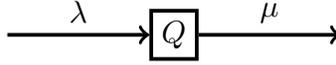
\begin{figure}[H]
\resizebox{5cm}{!}{%
\begin{tikzpicture}[->,node distance=10mm]

\node(M) at (-2,0){};
\node[black, very thick,rectangle,draw](L) at (0,0){$Q$};
\node(N) at (2,0){};

\path (M) edge [black,very thick,left,midway,above] node {$\lambda$} (L);
\path (L) edge [black,very thick,left,midway,above] node {$\mu$} (N);
\end{tikzpicture}}
\caption{$M/M/\infty$ queue}\label{FigF0}
\end{figure}
\end{center}

The following technical proposition on the $M/M/\infty$ queue  will be used repeatedly in the following.
\begin{proposition}\label{MMIHit}
\begin{enumerate}\ 
\item Let $(L(t))$ be the process of an $M/M/\infty$ queue with input rate $\lambda{>}0$ and service rate $\mu{>}0$ with $L(0){=}\ell{\in}\N$, then for any  $\delta{>}0$, $T{>}0$,
    \[
    \lim_{N\to+\infty}\P\left(\sup_{t\leq T}\frac{L(N^\delta t)}{N}\ge 1\right)=0.
    \]
  \item Let $(L_N(t))$ be the process of an $M/M/\infty$ queue with input rate $\lambda N{>}0$ and service rate $\mu{>}0$ with initial condition $L_N(0)$ such that,
    \[
    \lim_{N\to+\infty} \frac{L_N(0)}{N}=\ell_0,
    \]
    then, for any $\delta{>}0$ and $\eps{>}0$,
    \[
    \lim_{N\to+\infty}\P\left(\sup_{0\le t\leq T}\left|\frac{L_N(N^\delta t)}{N} - \rho\right|>|\ell_0{-}\rho|{+}\eps \right)=0. 
    \]
    with $\rho{=}{\lambda}/{\mu}$ and, for any $\eta{>}0$,
    \[
    \lim_{N\to+\infty}\P\left(\sup_{\eta\le t\leq T}\left|\frac{L_N(N^\delta t)}{N}{-}\rho\right|\ge \eps \right)=0. 
    \]
\end{enumerate}
\end{proposition}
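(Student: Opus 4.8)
The plan is to reduce the supremum over the growing time window $[0,N^\delta T]$ to a union bound over $O(N^\delta)$ subintervals of constant length, on each of which the deviation of the path is controlled by two independent mechanisms: a counting bound on the monotone contributions (arrivals can only raise the queue, departures only lower it), and a uniform tail bound on the marginal law at the left endpoint of each subinterval. The basic tool is the explicit transient law of the $M/M/\infty$ queue: starting from $\ell$ customers, at time $s$ one has $L(s)=B_s+P_s$, where $B_s$ is $\mathrm{Binomial}(\ell,e^{-\mu s})$ counting surviving initial customers and $P_s$ is an independent $\mathrm{Pois}(\rho(1{-}e^{-\mu s}))$ variable counting surviving arrivals. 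Using $\log(1{+}x)\le x$ on the associated generating function, this yields $\log\E[e^{\theta L(s)}]\le m_s(e^\theta{-}1)$ for every $\theta$ and every $s$, with $m_s=\E[L(s)]$; that is, $L(s)$ is dominated in the MGF order by a Poisson variable of the same mean, uniformly in $s$.

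For assertion (1), I fix $\delta,T$ and split $[0,N^\delta T]$ into the $K_N=O(N^\delta)$ unit intervals $[i,i{+}1]$. On each one the process can exceed $L(i)$ only through arrivals, so $\sup_{s\in[i,i+1]}L(s)\le L(i){+}A_i$, with $A_i$ a $\mathrm{Pois}(\lambda)$ variable of constant mean. A union bound gives $\P(\sup_{s\le N^\delta T}L(s)\ge N)\le \sum_i[\P(L(i)\ge N/2){+}\P(A_i\ge N/2)]$, and each summand is at most $C2^{-N/2}$ via the MGF bound with $z=2$ (for $L(i)$ this is uniform in $i$ since $\E[2^{L(i)}]\le 2^\ell e^{\rho}$). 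Because $K_N$ is only polynomial in $N$, the product $K_N\cdot 2^{-N/2}$ vanishes, which is exactly the required limit.

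For assertion (2), the same scheme applies after computing the fluid mean $\ell_N(s)=\rho{+}(\overline{L}_N(0){-}\rho)e^{-\mu s}$, where $\overline{L}_N(0)=L_N(0)/N$; by monotone relaxation $|\ell_N(s){-}\rho|\le|\overline{L}_N(0){-}\rho|$ for all $s$. I would split $[0,N^\delta T]$ into intervals of small but constant length $\Delta$, chosen so that the arrival count $\mathrm{Pois}(\lambda N\Delta)$ and, on the event that the path stays bounded, the departure count have mean below $(\eps/4)N$. The deviation above $\rho{+}|\ell_0{-}\rho|{+}\eps$ is controlled as in (1): arrivals bound the rise, while the marginal upper tail $\P(L_N(t_k)\ge(\rho{+}|\ell_0{-}\rho|{+}\eps/2)N)$ is $e^{-IN}$ with $I>0$ uniform in $k$, since the sub-Poisson mean $m_k/N=\ell_N(t_k)$ is eventually within $\eps/8$ of the band. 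The deviation below $\rho{-}|\ell_0{-}\rho|{-}\eps$ is symmetric, with departures bounding the fall and the lower tail again sub-Poisson; these two estimates give the first limit. The second limit follows because for $s\ge N^\delta\eta$ one has $|\ell_N(s){-}\rho|\le|\overline{L}_N(0){-}\rho|e^{-\mu N^\delta\eta}\to 0$, so the admissible band shrinks to $\rho\pm\eps$ and the same union bound applies on $[N^\delta\eta,N^\delta T]$.

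The main obstacle is the growing horizon: marginal concentration at a single time is not enough, and one must control a supremum over a window of length $N^\delta T\to\infty$. The resolution is that constant-length subintervals already suffice, so their number is only $O(N^\delta)$, while each contributes a failure probability exponentially small in $N$. A secondary technical point, specific to the lower deviation in (2), is that the departure rate is state-dependent, so the number of departures over a subinterval cannot be dominated by a fixed Poisson variable unconditionally; I would first establish $\sup_{s\le N^\delta T}L_N(s)\le(\rho{+}|\ell_0{-}\rho|{+}\eps)N$ and work on that high-probability event, where the departure rate is bounded by $\mu(\rho{+}|\ell_0{-}\rho|{+}\eps)N$ and the needed stochastic domination holds.
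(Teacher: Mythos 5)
Your argument is correct, but it follows a genuinely different route from the paper's. The paper proves (a) by invoking the hitting-time asymptotics of the $M/M/\infty$ queue (Proposition~6.10 of Robert's book: the time to reach a high level, normalized by a factorial, is asymptotically exponential, hence super-polynomially large), and proves the first claim of (b) by coupling $(L_N(t))$ with a reflected random walk --- an $M/M/1$ queue with constant, subcritical rates --- whose exit time from a band of width $\eps N$ is exponentially large (Proposition~5.11 there); the second claim of (b) then follows from the fluid limit plus the strong Markov property at the entrance time into an $\eps/2$-neighborhood of $\rho$. You instead discretize the window $[0,N^\delta T]$ into $O(N^\delta)$ constant-length blocks, control the rise (resp.\ fall) on each block by the arrival (resp.\ departure) count, and control the marginals at the grid points via the explicit transient law $L(s)\steq{dist}\mathrm{Bin}(\ell,e^{-\mu s}){+}\Pois{\rho(1{-}e^{-\mu s})}$ together with the sub-Poisson bound $\log\E[e^{\theta L(s)}]\le m_s(e^\theta{-}1)$; a polynomial number of blocks times an $e^{-IN}$ failure probability then vanishes. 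Both arguments must address the state-dependence of the death rate for the lower deviation: the paper does it through the reflected-walk coupling, while you first establish the upper bound on the supremum and dominate the departure count by a Poisson variable on that event --- you flag and resolve this correctly. The paper's proof is shorter given the cited results; yours is more self-contained and elementary, produces explicit exponential rates (so it would in fact tolerate time horizons growing sub-exponentially in $N$, not just polynomially), and handles the second claim of (b) directly from the decay of the mean rather than via the strong Markov property.
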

In the context of the second case of this proposition, it is well known that, for the convergence in distribution
\[
\lim_{N\to+\infty}\left(\frac{L_N(t)}{N}\right)=\left(\rho{+}(\ell_0{-}\rho)e^{-\mu t}\right). 
\]
See Theorem~6.13 of~\cite{Robert} for example. The case~b) gives a further characterization.  By speeding up the timescale by a factor $N^\delta$, then the process $L_N$ is arbitrarily close to the fixed point $\rho$ outside time $0$, and do not escape from any of its neighborhoods on any finite time interval. 
\begin{proof}
Define $u_N\steq{def}\lceil N^\delta\rceil$ and 
\[
H_N=\inf\{t{>}0: L(t) \ge u_N\rceil\}.
\]
Proposition~6.10 of~\citet{Robert} shows that $(H_N/(u_N{-}1)!)$ is converging in distribution to an exponential random variable. As a consequence, the relation
\[
\lim_{N\to+\infty}\P\left(H_N\leq N^\delta T\right)=0,
\]
holds. The case~a) of the proposition is proved. 

For the case~b), a coupling is used. Assume $\rho{\le}\ell_0{<}\rho{+}\eps$, define 
$v_N^1{=}\lceil N(\rho{+}2\eps)\rceil$, and 
\[
H_N^1=\inf\{t{>}0: L_N(t) \ge v_N^1\rceil\}.
\]
We introduce $(Z_N(t))$, the Markov process with jump matrix
\[
x\longrightarrow
\begin{cases}
x{+}1&   \lambda N,\\
x{-}1&   \mu N(\rho{+}\eps)\ind{x>N(\rho{+}\eps)}.
\end{cases}
\]
It is easy to construct a coupling of $(Z_N(t))$ and $(L_N(t))$, such that $L_N(t){\le}Z_N(t)$ holds for all $t{\le}H_N^1$.

The $(Z_N(t))$ is a random walk with a reflection at $v_N^0=\lfloor N(\rho{+}\eps) \rfloor$.
This process can be expressed as $v_N^0{+}A(Nt)$, where $(A(t))$ is a birth and death process on $\N$ with birth rate $\lambda$ and death rate $\lambda{+}\mu\eps$. This is the process of the $M/M/1$ queue, see Chapter~5 of~\cite{Robert}. 

The coupling gives that $T_N{\le}N H_N^1$, with
\[
T_N=\inf\{t{\ge}0: A(t) \ge N\eps \}.
\]
Proposition~(5.11) of~\citet{Robert} shows that there exists $\alpha{\in}(0,1)$ such that the sequence $(\alpha^nT_N)$ converges in distribution to an exponential distribution. This implies that $(T_N/N^{p})$ converges in distribution to ${+}\infty$ for any $p{>}0$, hence
\[
\lim_{N\to+\infty}\P(H_N^1\leq N^\delta T)=0.
\]
The case  $\rho{\ge}\ell_0{>}\rho{-}\eps$ is handled in a similar way. The first relation of~b) is therefore established. 

For $\eps{>}0$, the convergence of $(L_N(t)/N)$ to $(\rho{+}(\ell_0{-}\rho)e^{-\mu t})$, shows that if 
\[
H_N^2\steq{def}\inf\left\{t{\ge}0: \frac{L_N(t)}{N}\in\left(\rho{-}\frac{\eps}{2},\rho{+}\frac{\eps}{2}\right)\right\},
\]
then there exists $K{>}0$ such that $\P(H_N^2{\ge}K)$ is arbitrarily small uniformly in $N$. In particular $H_N^2$ is less than $\eta N^\delta$ with high probability. Our last assertion is proved by using the strong Markov property of $(L_N(t))$ for the stopping time $H_N^2$ and the first relation of~b). 
\end{proof}

\section{A Model without Regulation}\label{Reg-0-Sec}
In this section we investigate the production of $P$-particles when there is no possibility of sequestering $R$-particles. There are two regimes described roughly as follows. 
\begin{enumerate}
\item When $N$ is large, if the input rate of resources, $Q$-particles, is above some threshold then all $R$-particles but a finite number are paired in the initiation phase, the chemical species ${\cal RM}_I$. In particular there is a finite number of  paired $R$-particles waiting for a $Q$ particle, the chemical species ${\cal RM}_L$.
\item Otherwise, if the input rate is strictly less than this threshold the number of  paired $R$-particles waiting for a $Q$ particle is of the order of $N$.
\end{enumerate}

\begin{center}
\begin{figure}[H]
\resizebox{12cm}{!}{%
\begin{tikzpicture}[->,node distance=8mm]

\node[black, very thick,circle,draw](R_F) at (1,0){${\cal R}$};
\node[black, very thick, circle,draw](R_M) at (6.5,0){${\cal RM}_I$};
\node[black, very thick, circle,draw](R_1) at (10,0){${\cal RM}_L$};
\node[black, very thick, circle,draw](P) at (12,-1){$\,{\cal P}_r\,$};

    \node () [black,very thick,rectangle,draw]at (1,1){{\bf Free }};
    \node () [black]at (-1,1){{\sc $R$-Particles}};
    \node () [black,very thick,rectangle,draw]at (6.3,1){{\bf Initiation }};
    \node () [black,very thick,rectangle,draw]at (9.9,1){{\bf Elongation }};

  \path (R_F) edge [black,very thick,left,midway,above] node {$ \kri(M^0_N{-}(N{-}R))R$} (R_M);
  \path (R_M) edge [black,very thick,left,midway,above] node {$\kil (N{-}L{-}R) $} (R_1);
   \path (R_1) edge [black,very thick,bend left=45,midway,above] node {$\klr L(U^0_N{-}U)$} (R_F);
   \path (R_1) edge [black,very thick,bend left=-110,below] node {} (P);
   
  \node[black, very thick, circle,draw](Q1) at (1,-4.5){${\cal Q}$};
  \node[black, very thick, circle,draw](0) at (5,-4.5){$\emptyset$};

  \path (Q1) edge [black,very thick,bend left=25,midway,above] node {$\kq0 Q$} (0);
  \path (0) edge [black,very thick,bend left=25,midway,below] node {$\k0q N $} (Q1);
  \node () [black]at (-1,-3.5){{\sc Resources }};

  \node[black, very thick, circle,draw](T1) at (8,-4.5){${\cal U}$};
  \node[black, very thick, circle,draw](0) at (14,-4.5){${\cal UQ}$};

  \path (T1) edge [black,very thick,bend left=25,midway,above] node {$\kqu Q U$} (0);
  \path (0) edge [black,very thick,bend left=25,midway,below] node {$ \klr L(U^0_N{-}U)$} (T1);

  \node () [black]at (7,-3.5){{\sc Transoporters }};
\end{tikzpicture}}
\caption{Network without Regulation}
\end{figure}
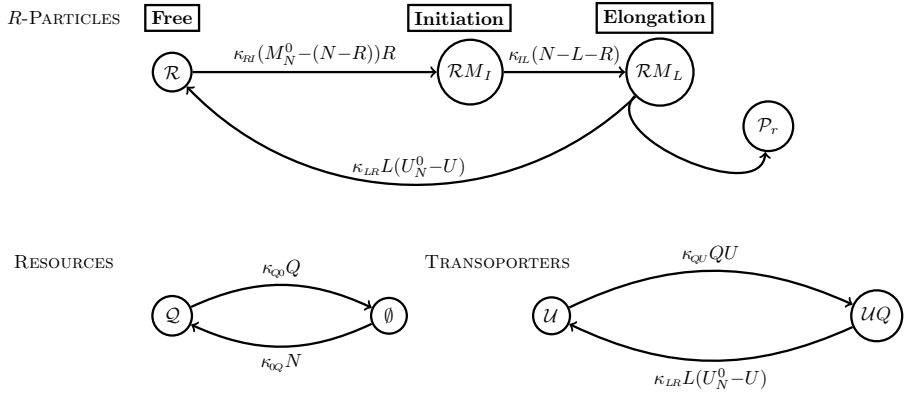
\end{center}

Without the sequestration mechanism, the state space is 
\[
{\cal S}_N=\left\{x{=}(r,\ell,q,u){\in}\N^4:r{+}l{\le}N, u{\le}U^0_N\right\},
\]
and the Markov process is  $(X_N(t)){=}(R_N(t),L_N(t),Q_N(t),U_N(t))$, with $Q$-matrix, for $x{=}(r,\ell,q,u){\in}{\cal S}_N$, 
\[
x{\longrightarrow} x{+}
\begin{cases}
\e{R}{-}\e{L}{+}e_u,&\klr  (U^0_N{-}u)\ell,\\
{-}\e{R},&\kri (M^0_N{-}(N{-}r))r,\\
\e{L},&\kil (N{-}e{-}r),
\end{cases}\text{ and }
x{\longrightarrow} x{+}
\begin{cases}
 \e{Q},& \k0q N,\\
 {-}\e{Q},&\kq0 q,\\
 {-}\e{U}{-}\e{Q},& \kqu uq.
\end{cases}
\]
In this case the corresponding SDEs, see Section~\ref{SDE-Sec}, are given by 
\begin{equation}\label{SDEwo}
\begin{cases}
  \diff R_N(t)&={\cal P}_{\klr}\left(\left(\rule{0mm}{4mm}0,L_N(t{-})(U^0_N{-}U_N(t{-}))\right),\diff t\right)\\
 &\hspace{2cm}{-}{\cal P}_{\kri}\left(\left(\rule{0mm}{4mm}0,(M^0_N{-}(N{-}R_N(t{-})))R_N(t{-})\right),\diff t\right),\\
  \diff L_N(t)&={\cal P}_{\kil}\left(\left(\rule{0mm}{4mm}0,N{-}R_N(t{-}){-}L_N(t{-})\right),\diff t\right) \\
&\hspace{2cm}{-}{\cal P}_{\klr}\left(\left(\rule{0mm}{4mm}0,L_N(t{-})(U^0_N{-}U_N(t{-}))\right),\diff t\right),\\
  \diff Q_N(t)&={\cal P}_{\k0q}\left(\left(\rule{0mm}{4mm}0,N\right),\diff t\right) \\
 &\hspace{5mm} {-}{\cal P}_{\kqu}\left(\left(\rule{0mm}{4mm}0,Q_N(t{-})U_N(t{-})\right),\diff t\right)
{-}{\cal P}_{\kq0}\left(\left(\rule{0mm}{4mm}0,Q_N(t{-})\right),\diff t\right),\\
\diff U_N(t)&={\cal P}_{\klr}\left(\left(\rule{0mm}{4mm}0,L_N(t{-})(U^0_N{-}U_N(t{-}))\right),\diff t\right)\\
 &\hspace{2cm}{-}{\cal P}_{\kqu}\left(\left(\rule{0mm}{4mm}0,Q_N(t{-})U_N(t{-})\right),\diff t\right),
\end{cases}
\end{equation}
The following proposition is a technical result on fast processes in a simple setting. It will used in the proof of the main result of this section. Its proof uses essentially standard arguments. It is nevertheless detailed since they will be used repeatedly without justification in subsequent proofs of this paper.
\begin{proposition}\label{Prop0}
For $0{<}\alpha{\le}\beta$, let $(A_N(t),B_N(t))$ be the solution of the SDE,
\[
  \begin{cases}
  \diff A_N(t) &= {\cal P}_{\kil}\left(\rule{0mm}{4mm}(0,N),\diff t\right) 
{-}{\cal P}_{\klr}\left(\left(\rule{0mm}{4mm}0,\alpha N A_N(t{-})\right),\diff t\right),\\
  \diff B_N(t) &= {\cal P}_{\klr}\left(\left(\rule{0mm}{4mm}0,\beta N A_N(t{-})\right),\diff t\right) {-}{\cal P}_{\kqu}\left(\left(\rule{0mm}{4mm}0,\eta N B_N(t{-})\right),\diff t\right),
  \end{cases}
  \]
  with initial condition $(a,b)$.

  If $\mu_N$ is the occupation measure of $(A_N(t),B_N(t))$ on $\N^2$, then the sequence $(\mu_N)$ is converging in distribution to $\diff t{\otimes}\pi$ with 
  \[
 \pi\steq{def}\Pois{\frac{\kil}{\alpha\klr}}{\otimes}\Pois{\frac{\beta\kil}{\alpha\kqu}},
  \]
and 
  \[
  \lim_{N\to+\infty}\croc{\mu_N,f{\otimes}h}=\croc{\mu_\infty,f{\otimes}h},
  \]
  for any  function $f{\in}{\cal C}_c(\R_+)$ and $h{\in}L_1(\pi)$, in particular for any $h$ such that
  \[
|h(a,b)|\le C_1{+}C_2(a{+}b), \forall (a,b){\in}\N^2,
\]
for some constants $C_1$ and $C_2$.

For any $\delta{\in}(0,1)$, $p{\ge}1$, $T{>}0$ and $\eps{>}0$,
    \begin{equation}\label{Borne}
    \lim_{N\to+\infty}\P\left(\sup_{t\leq T}\frac{(A_N{+}B_N)(N^pt)}{N^\delta}\ge \eps\right)=0.
    \end{equation}

    Finally, the relation
    \begin{equation}\label{ErT}
      \lim_{N\to+\infty} \left(\frac{1}{N}\int_0^t A_N(u)\diff u,\frac{1}{N}\int_0^t B_N(u)\diff u \right)=\left(\frac{\kil}{\klr\beta}t,\frac{\kil\beta}{\kqu\alpha}t\right)
    \end{equation}
    holds for the convergence in distribution of processes. 
\end{proposition}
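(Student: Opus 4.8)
The plan is to regard $(A_N(t),B_N(t))$ as a single \emph{fast} Markov jump process on $\N^2$: every transition rate is proportional to $N$, so its generator is $N\Omega$ for a fixed generator $\Omega$ independent of $N$, and $N$ merely speeds up time. Two structural remarks guide the whole proof. First, the coordinate $(A_N(t))$ is autonomous and is exactly the $M/M/\infty$ queue with input rate $\kil N$ and per-capita service rate $\alpha\klr N$; its equilibrium is $\Pois{\kil/(\alpha\klr)}$ and it is of order $O(1)$. Second, conditionally on $(A_N(t))$, the coordinate $(B_N(t))$ is an $M/M/\infty$-type queue with time-varying input rate $\beta\klr N A_N(t)$ and per-capita service rate $\eta\kqu N$. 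Both coordinates are thus dominated, uniformly in $N$, by genuine $M/M/\infty$ queues, which supplies all the moment bounds used below.

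For the occupation measure I would argue in the classical averaging way. Tightness of $(\mu_N)$ on $\R_+{\times}\N^2$ follows from the uniform moment bounds, no mass escaping to infinity. To identify the limit, for $g$ of finite support consider the martingale
\[
M_N^g(t){=}g(A_N(t),B_N(t)){-}g(A_N(0),B_N(0)){-}N\int_0^t \Omega g(A_N(u),B_N(u))\diff u ;
\]
since $g$ and $\Omega g$ are bounded, dividing by $N$ forces $\croc{\mu_N,\ind{[0,t]}{\otimes}\,\Omega g}{\to}0$, so any limit point $\mu_\infty$ disintegrates as $\diff t{\otimes}\nu$ with $\croc{\nu,\Omega g}{=}0$; by irreducibility and positive recurrence the invariant measure is unique and equals $\pi$, whose explicit expression comes from solving the balance equations of $\Omega$ (the autonomy of $(A_N)$ gives the first marginal and the stationary identity $\eta\kqu\croc{\pi,b}{=}\beta\klr\croc{\pi,a}$ the mean of the second). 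Convergence of $\croc{\mu_N,f{\otimes}h}$ for $h{\in}L_1(\pi)$ of at most linear growth is then upgraded from weak convergence by the uniform integrability estimate $\sup_N\E[(A_N(t){+}B_N(t))^2]{<}{\infty}$, again through the $M/M/\infty$ domination. Finally \eqref{ErT} is exactly this last statement applied to the coordinate functions $h(a,b){=}a$ and $h(a,b){=}b$, the right-hand side being the corresponding stationary means $(t\croc{\pi,a},t\croc{\pi,b})$; equivalently, one integrates the two SDEs, divides the compensator identities by $N$, and lets the normalized boundary and martingale terms vanish.

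The estimate \eqref{Borne} is the delicate point, since it requires a bound on the \emph{sample path} over the polynomially long horizon $[0,N^pT]$, for which the time-average control of the occupation measure is useless. The plan is a nested first-passage argument. Fix $\delta'{\in}(0,\delta)$. Since $(A_N)$ is the autonomous $M/M/\infty$ queue of order $O(1)$, its first passage time to level $\lceil N^{\delta'}\rceil$ exceeds $N^pT$ with probability tending to $1$ -- this is the first-passage estimate (Proposition~6.10 of~\citet{Robert}) already underlying part~(a) of Proposition~\ref{MMIHit}, the point being that the first passage to a level tending to infinity is super-polynomial in $N$. On the event $\sup_{s\le N^pT}A_N(s){<}N^{\delta'}$ the input rate of $(B_N)$ is at most $\beta\klr N^{1+\delta'}$, so $(B_N)$ is dominated by an $M/M/\infty$ queue of order $O(N^{\delta'})$, and since $\delta'{<}\delta$ the same first-passage estimate gives $\sup_{s\le N^pT}B_N(s){<}\eps N^\delta/2$ with high probability. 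Adding the two bounds yields \eqref{Borne}.

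I expect this last step to be the main obstacle. Controlling $(B_N)$ forces one to first secure the pathwise bound on $(A_N)$ and then to propagate a first-passage estimate through the induced $M/M/\infty$ domination over a horizon that grows with $N$; it is the cooperation of the two scales $N^{\delta'}$ and $N^\delta$ that makes the argument work, the crude bound $\sup A_N{\le}\eps N^\delta$ being too weak to keep $(B_N)$ below $\eps N^\delta$. This pathwise (rather than occupation-measure) control of a fast process whose driving rate is itself another fast process is exactly the technical difficulty flagged in the introduction.
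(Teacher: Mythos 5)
Your proposal is correct and, on the decisive point --- the pathwise estimate \eqref{Borne} --- it coincides with the paper's argument: an intermediate exponent $\delta'{<}\delta$ (the paper's $\delta_0$), the super-polynomial first-passage bound underlying Proposition~\ref{MMIHit}(a) applied to the autonomous $M/M/\infty$ coordinate $(A_N)$, and then, on that event, domination of $(B_N)$ by an $M/M/\infty$ queue of order $N^{\delta'}$ to which the same hitting-time estimate is applied; the treatment of \eqref{ErT} by integrating the SDEs and killing the normalized martingale and boundary terms is also the paper's (and your identification of the limit as the stationary means $(t\croc{\pi,a},t\croc{\pi,b})$ is the correct reading of \eqref{ErT}, whose displayed constants and normalization contain slips). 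The one place where you genuinely diverge is the occupation-measure step. You identify $\pi$ through the martingale problem for the generator $\Omega$ and then upgrade weak convergence to unbounded test functions via a uniform second-moment bound; the paper instead proves positive recurrence with the linear Lyapounov function $a_1x_1{+}a_2x_2$, $\beta a_2{<}\alpha a_1$, obtains $\croc{\pi,b}{<}{+}\infty$ from the stationary balance $\eta\kqu\croc{\pi,b}{=}\beta\klr\croc{\pi,a}$, and then applies the pointwise ergodic theorem to $\frac{1}{N}\int_0^{NT}h(Y(s))\diff s$. The ergodic-theorem route is what delivers $\croc{\mu_N,f{\otimes}h}{\to}\croc{\mu_\infty,f{\otimes}h}$ for \emph{every} $h{\in}L_1(\pi)$ as claimed; your uniform-integrability argument only reaches the ``in particular'' clause (linear growth), which is all that is used downstream but is strictly weaker than the statement. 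Two smaller cautions: the early assertion that both coordinates are dominated ``uniformly in $N$ by genuine $M/M/\infty$ queues'' is not literally true for $(B_N)$, whose input rate is driven by the unbounded process $(A_N)$ --- you repair this for \eqref{Borne}, but the moment bounds invoked for tightness also require either the conditional-Poisson representation or the first-moment balance above, not a bare domination; and a limit point of the occupation measures disintegrates a priori as $\int_0^\cdot\pi_s\,\diff s$ with $s$-dependent kernels, each of which must be shown invariant before uniqueness collapses the limit to $\diff t{\otimes}\pi$.
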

Recall that, Section~\ref{SDE-Sec},  for $a{>}0$, $\Pois{a}$ is the Poisson distribution on $\N$ with parameter $a$. In Relation~\eqref{ErT}, the convergence in distribution of the marginals is a direct consequence of the convergence of $(\mu_N)$. 
\begin{proof}
Let $(Y(t)){=}(A(t),B(t))$ be the Markov process on $\N^2$ with $Q$-matrix given by 
  \[
x{\longrightarrow} x{+}
\begin{cases}
e_{1},&\kil,\\
 e_{2}{-}e_{1},&\klr \alpha  x_1,\\
 e_{2},&\klr (\beta-\alpha)x_1,\\
{-}e_{2},&\kqu \eta x_2,
\end{cases}
\]
and initial point $(a,b)$. It is clear that $(A_N(t),B_N(t)){\steq{dist}}(Y(Nt))$. Note that the the first coordinate $(A(t))$ is an $M/M/\infty$ process, its invariant measure is a Poisson distribution with parameter $\kil/(\alpha\klr)$.  The process $(Y(t))$ itself is a  positive recurrent Markov process. 
 Indeed, it is easily checked that the function $F$ on $\N^2$ defined by $F(r,l){=}a_1x_1{+}a_2x_2$ is a Lyapounov function satisfying the assumptions of Proposition~8.14 of~\citet{Robert} provided that  the positive constants $a_1$ and $a_2$ are chosen so that $\beta a_2{<}\alpha a_1$ holds. We denote by $\pi$ its invariant distribution. 

By representing  $(Y(t))$ as the solution of an SDE, one gets  that if $Y(0)\steq{dist}(0,0)$ and $K{>}0$, then  one gets the relation
\[
\eta\kqu \E\left(\int_0^t \left(B(s){\wedge}K\right)\diff s \right)\le \beta\klr\E\left(\int_0^t A(s)\diff s\right),
\]
and by dividing this relation by $t$ and by letting $t$ go to infinity, the mean ergodic theorem for positive recurrent Markov processes gives the relation
\[
\eta\kqu \int_{\N^2} \left(b{\wedge} K\right)\pi(\diff a,\diff b)\le \beta\klr \int_{\N^2} a\pi(\diff a,\diff b)=\frac{\beta\kil}{\alpha},
\]
for all $K{>}0$. By letting $K$ go to infinity, one gets that the first moments of  $\pi$ are finite. 

For $\eps{>}0$, there exists $K_0{>}0$ such that $\pi([0,K_0]^2){\ge}1{-}\eps$, so that, for $T{>}0$, 
\[
\croc{\mu_N,[0,T]{\times}[0,K_0]^2}=\int_0^T\ind{Y(Ns){\in}[0,K_0]^2}\diff s=\frac{1}{N}\int_0^{NT} \ind{Y(Ns){\in}[0,K_0]^2}\diff s,
\]
and therefore, again by the mean ergodic theorem, 
\[
\lim_{N\to+\infty}\E\left(\croc{\mu_N,[0,T]{\times}[0,K_0]^2}\right)=T\pi([0,K_0]^2.
\]
From Lemma~1.3 of~\citet{Kurtz}, we obtain that $(\mu_N)$ is tight for the convergence in distribution.
If $h{:}\N^2{\to}\R$ is in $L_1(\pi)$ then,
\[
\croc{\mu_N,\mathbbm{1}_{[0,T]}{\otimes}h}=\frac{1}{N}\int_0^{NT} h(Y(s))\diff s\stackrel{N{\to}{+}\infty}{\longrightarrow}T\int h(y)\pi(\diff y),
\]
almost surely,  by the pointwise ergodic theorem.  We get that $(\mu_N)$ is converging in distribution to $\diff t{\otimes}\pi$ and that the convergence holds for all functions $f{\otimes}h$ such that,  $f{\in}{\cal C}_c(\R_+)$ and $h{\in}L_1(\pi)$.

Fix  $\delta_0{<}\delta$, from Lemma~\eqref{MMIHit},   for any $\eps{>}0$, and $T{>}0$, we have $\P({\cal E}_N){\ge}1{-}\eps$, for $N$ sufficiently large, with 
\[
{\cal E}_N\steq{def} \left\{ \sup_{t\leq T}\frac{A_N(t)}{N^{\delta_0}}\le 1\right\},
\]
if $(F_N(t))$ is the solution of the SDE
\[
  \diff F_N(t) = {\cal P}_{\klr}\left(\left(\rule{0mm}{4mm}0,\beta N^{\delta_0}\right),\diff t\right) {-}{\cal P}_{\kqu}\left(\left(\rule{0mm}{4mm}0,\eta F_N(t{-})\right),\diff t\right),
  \]
  with $F_N(0){=}B_N(0)$. It is easily checked that the relation $B_N(t){\le}F_N(Nt)$, for all $t{\le}T$ on the event ${\cal E}_N$.
  From Proposition~\ref{MMIHit}, there exists $K_0{>}0$ such that
  \[
  \lim_{N\to+\infty}\P\left(\sup_{t\leq T}\frac{B_N(N^p t)}{N^{\delta_0}}\geq K\right)=0.
  \]
For the last assertion of the proposition, since the possible limit is determined by the ergodic theorem,  it is enough to prove the convergence of each of the coordinates of the sequence of processes and therefore the tightness of this sequence. From the SDE for $(A_N(t))$, we have 
\[
\frac{A_N(t)}{N}=\frac{a}{N}+M_N^A(t){+}\kil t{-}\beta\klr\frac{1}{N}  \int_0^t A_N(u)\diff u,
\]
where $(M_N^A(t))$ is a martingale. With Doob's Inequality, it is easily seen $(M_N(t))$ is converging in distribution to $(0)$. We have shown, Relation~\eqref{Borne}, that $(A_N(t)/N)$ is  converging to $(0)$ and, hence, the convergence of the first coordinate of Relation~\eqref{ErT}. Using again the SDE, one gets
\[
\beta\frac{A_N(t)}{N}+\alpha\frac{A_N(t)}{N}=\beta a{+}\alpha b{+}M_N^B(t){+}\alpha\kil t-\kqu\int_0^t B_N(u)\diff u,
\]
the desired convergence follows in the same way. The proposition is proved. 
\end{proof}

To prove that the process $(Y(t))$ is a  positive recurrent Markov process, we have used a Lyapounov criterion. In fact  it is easily verified that
\[
\pi= \Pois{\frac{\kil}{\alpha\klr}}{\otimes}\Pois{\frac{\beta\kil}{\alpha\kqu}}
\]
is its invariant distribution. We could have used this explicit representation to simplify our proof. It is unfortunately not always available for all the cases studied in the following. The above method with Lyapounov function and ergodic theorems applies in these cases. 

We now prove another technical result on the perturbation of a scaled $M/M/\infty$ queue. 
\begin{proposition}\label{Prop1}
Let $(V_N(t))$ and $(W_N(t))$ be non-negative optional processes with values in $[0,N]$, resp. in $[0,U_N]$, we denote by $(A_N(t))$ the solution of
\begin{multline}\label{SDELV}
  \diff A_N(t)={\cal P}_{\kil}\left(\left(\rule{0mm}{4mm}0,N{-}V_N(t{-})\right),\diff t\right) \\
{-}{\cal P}_{\klr}\left(\left(\rule{0mm}{4mm}0,A_N(t{-})(U^0_N{-}W_N(t{-}))\right),\diff t\right),
\end{multline}
with the initial condition $A_N(0){=}a{\in}\N$

If  there exists $\delta{\in}(0,1)$ such that, for any $T{>}0$,
  \[
  \lim_{N\to+\infty}\P\left(\sup_{t{\le} T}\frac{V_N(t)}{N^\delta}\ge 1, \sup_{t{\le} T}\frac{W_N(t)}{N^\delta}\ge 1\right)=0,
  \]
then the sequence of occupation measures of $(A_N(t))$ converges in distribution to the measure $\diff t{\otimes}\Pois{{\kil}/{(\klr C_U)}}$.
\end{proposition}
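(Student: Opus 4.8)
The plan is to recognise $(A_N(t))$ as a scaled $M/M/\infty$ queue subject to negligible perturbations: its input rate is $\kil(N{-}V_N(t{-}))$ and its per-capita death rate is $\klr(U^0_N{-}W_N(t{-}))$, so that once $V_N$ and $W_N$ are shown to be $o(N)$ the dynamics are those of an $M/M/\infty$ queue with input $\kil N$ and per-capita service rate $\klr U^0_N$, whose stationary parameter $\kil N/(\klr U^0_N)$ tends to $\kil/(\klr C_U)$ by the scaling assumption~\eqref{Scal}. I work throughout on the event $\mathcal{G}_N{=}\{\sup_{t\le T}V_N(t){<}N^\delta\}\cap\{\sup_{t\le T}W_N(t){<}N^\delta\}$, which by the hypothesis satisfies $\lim_N\P(\mathcal{G}_N){=}1$; since everything is tested on a fixed $[0,T]$ against $f{\in}{\cal C}_c(\R_+)$, the contribution of $\mathcal{G}_N^c$ is asymptotically negligible. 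On the same probability space and from the same Poisson processes ${\cal P}_{\kil},{\cal P}_{\klr}$ I would build two genuine $M/M/\infty$ queues $(A_N^-(t))$ and $(A_N^+(t))$ with $A_N^\pm(0){=}a$: $A_N^-$ with input $\kil(N{-}N^\delta)$ and per-capita death $\klr U^0_N$, and $A_N^+$ with input $\kil N$ and per-capita death $\klr(U^0_N{-}N^\delta)$. On $\mathcal{G}_N$ the birth intensity $\kil(N{-}V_N(t{-}))$ lies between the two bounding intensities and $U^0_N{-}N^\delta{\le}U^0_N{-}W_N(t{-}){\le}U^0_N$, so the standard monotone graphical coupling of $M/M/\infty$ queues preserves the order $A_N^-(t){\le}A_N(t){\le}A_N^+(t)$ for all $t{\le}T$; both bounding parameters converge to $\rho{\steq{def}}\kil/(\klr C_U)$ since $\delta{<}1$ and $U^0_N/N{\to}C_U$.

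The domination $A_N{\le}A_N^+$ yields tightness of the occupation measures $(\mu_N)$ of $(A_N)$. As in the proof of Proposition~\ref{Prop0}, the honest fast queue $A_N^+$ has invariant distribution $\Pois{\kil N/(\klr(U^0_N{-}N^\delta))}$ with a bounded parameter, so the mean ergodic theorem controls the expected time it spends above a level $K$ on $[0,T]$, uniformly in $N$ and arbitrarily small for $K$ large. The pathwise bound transfers this control to $A_N$, and Lemma~1.3 of~\citet{Kurtz} then gives tightness of $(\mu_N)$ and, crucially, that no mass escapes to infinity, so that every limit point has the form $\diff t{\otimes}\pi_t$ with $\pi_t$ a probability measure on $\N$.

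To identify $\pi_t$ I would use a generator/averaging argument. For a finitely supported $g{:}\N{\to}\R$, Dynkin's formula reads
\[
g(A_N(t)){-}g(a)=\int_0^t\Omega_N g(A_N(s))\diff s+M_N(t),
\]
with $M_N$ a martingale and $\Omega_N g(x){=}\kil(N{-}V_N)(g(x{+}1){-}g(x)){+}\klr x(U^0_N{-}W_N)(g(x{-}1){-}g(x))$. Dividing by $N$, the left-hand side tends to $0$, and so does the martingale term by Doob's inequality together with the quadratic-variation estimate $\E\langle M_N\rangle(t){=}O(N\int_0^t\E(1{+}A_N(s))\diff s)$, the latter integral being $O(1)$ thanks to the tightness above. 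On $\mathcal{G}_N$ one has $V_N/N,W_N/N{<}N^{\delta-1}{\to}0$ and $U^0_N/N{\to}C_U$, so $\Omega_N g/N$ converges uniformly over the finite support of the increments of $g$ to $\widetilde\Omega g(x){\steq{def}}\kil(g(x{+}1){-}g(x)){+}\klr C_U\,x(g(x{-}1){-}g(x))$, the generator of the $M/M/\infty$ queue with input $\kil$ and service rate $\klr C_U$. Passing to the limit gives $\croc{\mu_\infty,\mathbbm{1}_{[0,t]}{\otimes}\widetilde\Omega g}{=}0$ for all $t$ and all such $g$, i.e. $\int\widetilde\Omega g\,\diff\pi_s{=}0$ for almost every $s$. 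As this generator is irreducible and positive recurrent, its unique stationary distribution is $\Pois{\kil/(\klr C_U)}$, forcing $\pi_s{\equiv}\Pois{\kil/(\klr C_U)}$ and hence $\mu_\infty{=}\diff t{\otimes}\Pois{\kil/(\klr C_U)}$.

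The main obstacle is precisely that $(V_N)$ and $(W_N)$ are arbitrary optional processes, controlled only on the high-probability event $\mathcal{G}_N$, and entering multiplicatively in the transition rates; the crux is to show that on $\mathcal{G}_N$ their effect on the generator is $o(N)$ uniformly over the states actually visited, so that the fast dynamics reduce to the unperturbed $\widetilde\Omega$. An alternative to the generator step, avoiding the martingale estimates, is to sandwich directly: for the monotone functions $\mathbbm{1}_{[k,\infty)}$, whose tails determine a distribution on $\N$, the pathwise inequalities $A_N^-{\le}A_N{\le}A_N^+$ squeeze $\croc{\mu_N,f{\otimes}\mathbbm{1}_{[k,\infty)}}$ between two quantities both converging to $(\int f)\,\Pois{\rho}([k,\infty))$, after truncating the tail by means of the tightness established above.
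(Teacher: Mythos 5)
Your proposal is correct and follows essentially the same route as the paper: tightness of the occupation measures via Lemma~1.3 of Kurtz, then the martingale/generator identity for finitely supported test functions, division by $N$, vanishing of the martingale and of the perturbation terms involving $V_N$ and $W_N$ on the high-probability event, and identification of the limit kernel $\pi_s$ as the invariant distribution $\Pois{\kil/(\klr C_U)}$ of the limiting $M/M/\infty$ generator. Your explicit sandwich coupling with the two bounding $M/M/\infty$ queues is a slightly more detailed way of obtaining the tightness and moment control that the paper dispatches by reference to the proof of Proposition~\ref{Prop0}, but it is not a different method.
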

\begin{proof}
In the same way as in the proof of Proposition~\ref{Prop0}, with Lemma~1.3 of~\citet{Kurtz} we can prove that the sequence $(\mu_N)$ of occupation measures of $(A_N(t))$ is tight. Lemma~1.4 of this reference shows that any limiting point $\mu_\infty$ can be represented as, for $g{\in}{\cal C}_c(\R_+{\times}\N)$, 
\[
\croc{\mu_\infty,g}=\int_0^{+\infty} \int_{\N}g(s,a)\pi_s(\diff a) \diff s,
\]
almost surely, where $(\pi_s)$ is an optional process with values in ${\cal P}(\N)$, and the convergence to $\mu_\infty$ holds for function $g$ of the type $f{\otimes}h$, for $f{\in}{\cal C}_c(\R_+)$ and $h{:}\N{\to}\R_+$, such that $(h(x)/x, x{\ge}1)$ is bounded .

  If $f:\N{\to}\R_+$ has finite support then, almost surely,  for all $t{\ge}0$,  
\begin{multline}\label{eqr2}
  f(A_N(t))=  f(a)+M_N^f(t)+\kil\int_0^t \nabla_1(f)(A_N(s))(N{-}V_N(s))\diff s
\\  +\klr\int_0^t \nabla_{-1}(f)(A_N(s))A_N(s)(U^0_N{-}W_N(s))\diff s,
\end{multline}
where $(M_N^f(t))$ is a martingale. Using the assumption on $(V_N(t))$ and $(W_N(t))$, as in the proof of Proposition~\ref{Prop0}, we obtain that the sequences of processes
\[
\left(\frac{1}{N}M_N^f(t)\right) \text{ and }\left(\frac{1}{N}\int_0^t (V_N(s){+}A_N(s)W_N(s))\diff s\right)
\]
are converging in distribution to $(0)$. By dividing Relation~\eqref{eqr2} by $N$ and by taking a convenient subsequence we obtain that the relation
\[
\left(\int_0^t \int_{\N} \left(\kil\nabla_1(f)(a)){+}\klr C_U a \nabla_{-1}(f)(a)\right)\pi_s(\diff a) \diff s\right)=(0),
\]
holds almost surely. Since the operator $\Omega$ defined by
\[
\Omega(f)(a){=}\kil\nabla_1(f)(a)){+}\klr C_U a \nabla_{-1}(f)(a)
\]
is the infinitesimal generator of the $M/M/\infty$ queue with input rate $\kil$ and service rate $\klr C_U$, by following the standard method of~\citet{Kurtz} we get that,  for $g{\in}{\cal C}_c(\R_+{\times}\N)$, the relation
\[
\croc{\mu_{\infty},g}=\int_0^{+\infty} \int_{\N}g(s,a)\Pois{\frac{\kil}{\klr C_U}}(\diff a) \diff s
\]
holds almost surely. The proposition is proved. 
\end{proof}
The Markov process of Proposition~\ref{Prop0} is related to a fast process of one of the models considered in this paper. As usual the invariant distribution  of the fast processes plays an important role in the dynamical behavior of the slow variables. We have seen that it has a product form which is a desirable property. The three-dimensional fast process  of the stability result of this section, Theorem~\ref{Stab-0}  does not have this property. Nevertheless its invariant distribution has an explicit representation. This is the result of the  following lemma. 
\begin{proposition}\label{FastInv}
For $\lambda{>}0$, $\mu_i{>}0$, $i{=}R$, $L$, $U$,  the invariant distribution of the Markov process on $\N^3$ with $Q$-matrix given by
 \[
x{\longrightarrow} x{+}
\begin{cases}
\e{R}{+}\e{U}{-}\e{L}, &\mu_{L} x_{L},\\
{-}e_R,&\mu_R x_R.\\
{-}e_U,&\mu_U x_U,\\
e_L,& \lambda,
\end{cases}
\]
is the distribution of the random vector $(X+Y_1,Z,X+Y_2)$, where $X$, $Y_1$, $Y_2$, $Z$,  are independent Poisson random variables with respective parameters
\[
\frac{\lambda}{\mu_R{+}\mu_U},\quad  \frac{\lambda\mu_U}{\mu_R(\mu_R{+}\mu_U)},\quad \frac{\lambda\mu_R}{\mu_U(\mu_R{+}\mu_U)},\quad \frac{\lambda}{\mu_L}. 
\]
\end{proposition}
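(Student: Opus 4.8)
The plan is to identify the invariant distribution $\pi$ through its generating function, verifying directly that the proposed law solves the stationary equation $\croc{\pi,\Omega f}=0$, where $\Omega$ is the generator of the process, namely, for $x{=}(x_R,x_L,x_U)$,
\[
\Omega f(x)=\lambda(f(x{+}\e{L}){-}f(x))+\mu_Lx_L(f(x{+}\e{R}{+}\e{U}{-}\e{L}){-}f(x))+\mu_Rx_R(f(x{-}\e{R}){-}f(x))+\mu_Ux_U(f(x{-}\e{U}){-}f(x)).
\]
First I would establish positive recurrence, so that $\pi$ exists and is unique. The linear function $V(x)=ax_R+bx_L+cx_U$ with $a,c>0$ and $b>a{+}c$ satisfies $\Omega V(x)\le K{-}\eps V(x)$ for suitable constants $K,\eps>0$, since the coefficient of each coordinate in $\Omega V$ is then strictly negative; Proposition~8.14 of~\citet{Robert} yields positive recurrence, and the same truncation argument as in the proof of Proposition~\ref{Prop0} shows that the first moments of $\pi$ are finite.

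The core step is the verification on generating functions. For the candidate law, with $X,Y_1,Y_2,Z$ independent Poisson variables with the stated parameters, the generating function is $G(z_R,z_L,z_U)=\E[(z_Rz_U)^X z_R^{Y_1} z_U^{Y_2} z_L^Z]$; using $\E[s^W]=\exp(a(s{-}1))$ for $W{\sim}\Pois{a}$, one writes $G=\exp(\phi)$ with
\[
\phi=\frac{\lambda}{\mu_R{+}\mu_U}(z_Rz_U{-}1)+\frac{\lambda\mu_U}{\mu_R(\mu_R{+}\mu_U)}(z_R{-}1)+\frac{\lambda\mu_R}{\mu_U(\mu_R{+}\mu_U)}(z_U{-}1)+\frac{\lambda}{\mu_L}(z_L{-}1).
\]
Applying $\Omega$ to the monomial $f(x)=z_R^{x_R}z_L^{x_L}z_U^{x_U}$, rewriting $x_if$ as $z_i\partial_{z_i}f$, and taking $\pi$-expectations turns $\croc{\pi,\Omega f}=0$ into the first-order equation
\[
\lambda(z_L{-}1)G+\mu_L(z_Rz_U{-}z_L)\partial_{z_L}G+\mu_R(1{-}z_R)\partial_{z_R}G+\mu_U(1{-}z_U)\partial_{z_U}G=0.
\]
Substituting $G=\exp(\phi)$, with $\partial_{z_L}G=(\lambda/\mu_L)G$, $\partial_{z_R}G=\tfrac{\lambda}{\mu_R+\mu_U}(z_U{+}\mu_U/\mu_R)G$ and $\partial_{z_U}G=\tfrac{\lambda}{\mu_R+\mu_U}(z_R{+}\mu_R/\mu_U)G$, the bracket collapses: the first two terms contribute $\lambda(z_Rz_U{-}1)$, while the last two combine, after the algebraic identity $(1{-}z_R)(\mu_Rz_U{+}\mu_U)+(1{-}z_U)(\mu_Uz_R{+}\mu_R)=(\mu_R{+}\mu_U)(1{-}z_Rz_U)$, to $\lambda(1{-}z_Rz_U)$, whose sum vanishes. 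By uniqueness of $\pi$ this identifies the invariant law.

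The only genuine obstacle is the passage from compactly supported test functions to the monomials $z^x$: I must justify that $\croc{\pi,\Omega f}=0$ persists there. Since $|z_i|\le1$ makes $f$ bounded and $\Omega f$ is dominated by $C(1{+}x_R{+}x_L{+}x_U)$, which is $\pi$-integrable by the finiteness of first moments from the first step, the stationary martingale identity extends and the computation is legitimate. I would close by recording the probabilistic reading that explains the formula: each $L$ departs at rate $\mu_L$, simultaneously emitting one $R$ and one $U$; at equilibrium these emissions form a Poisson flow of rate $\lambda$, and marking each emitted pair according to whether its $R$-copy (lifetime exponential with parameter $\mu_R$) and/or its $U$-copy (lifetime exponential with parameter $\mu_U$) is still present yields, by Poisson thinning, independent Poisson counts with exactly the four parameters of the statement, $X$ counting the pairs with both copies alive, while $Z$ is the stationary $M/M/\infty$ occupation of the $L$-coordinate.
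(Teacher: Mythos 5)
Your proof is correct, but it takes a genuinely different route from the paper. The paper's argument is constructive: it represents the process via a marked Poisson point process on $\R{\times}\R_+^3$ (arrivals at the $L$-node carrying independent exponential lifetimes for the $L$-, $R$- and $U$-stages), uses invariance of that point process under time shift and reversal to identify the stationary state as a vector of Poisson integrals of indicator functions, and then reads off the joint Laplace transform from the exponential formula --- the independence of $Z$ from $(X{+}Y_1,X{+}Y_2)$ and the common component $X$ fall out of the disjointness, resp.\ overlap, of the supports of those indicators. You instead establish existence and uniqueness of $\pi$ by a Foster--Lyapunov argument (your choice $b{>}a{+}c$ is the exact analogue of the condition $\beta a_2{<}\alpha a_1$ the paper uses in Proposition~\ref{Prop0}), derive the first-order PDE satisfied by the stationary generating function, and verify by direct substitution that $G{=}\exp(\phi)$ solves it; I checked the cancellation, including the identity $(1{-}z_R)(\mu_Rz_U{+}\mu_U)+(1{-}z_U)(\mu_Uz_R{+}\mu_R)=(\mu_R{+}\mu_U)(1{-}z_Rz_U)$, and it is right. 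You also correctly flag and handle the one delicate point of this route, namely extending $\croc{\pi,\Omega f}{=}0$ from finitely supported $f$ to the bounded monomials $z^x$ (for $|z_i|{\le}1$, $\Omega f$ is dominated by an affine function of $x$, which is $\pi$-integrable once the first moments are shown finite). The trade-off: your verification is shorter and more elementary, but it requires the answer to be guessed in advance and is less portable to variants of the network; the paper's construction derives the formula and makes the decomposition $(X{+}Y_1,Z,X{+}Y_2)$ probabilistically transparent --- which is precisely the thinning picture you sketch, non-rigorously, in your closing remark.
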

\begin{proof}
This Markov process can also be represented as follows: there Poisson arrivals of particles at node $2$ with rate $\lambda$, each particle leaves the node after an exponentially distributed amount of time  with rate $\mu_2$ and, at this instant, creates a particle at node $1$ and a particle at node $3$. The distribution of the lifetimes of particles at node $1$ and $3$ are exponential with respective parameters $\mu_1$ and $\mu_3$.

We use the method  of~\citet{FLR1}, see also~\citet{Rob2019}. The trick is of using a Poisson marked point process on $\R{\times}\N^3$, 
  \[
  {\cal N}={\cal N}(\diff t, \diff x)= \sum_{n{\in}\Z}\delta_{(t_n,L_{1}^{N},L_{2}^{N},L_{3}^{N})},
  \]
  where $(t_n)$ is the sequence of the points of a Poisson process on $\R_+$ with rate $\lambda$ and for $i{=}1$, $2$, $3$, $(L_{i}^N)$ is an i.i.d. sequence with an exponential distribution with parameter $\mu_i$. All theses sequences are independent. The intensity of the Poisson process is the positive measure
  \[
  m_{\cal N}(\diff u,\diff x)\steq{def}  \lambda \diff u {\otimes}\mu_1e^{-\mu_1x_1}\diff x_1{\otimes}\mu_2e^{-\mu_2x_2}\diff x_2{\otimes}\mu_3e^{-\mu_3x_3}\diff x_3
  \]
  on $\R{\times}\R_+^3$, $F$ is a non-negative Borelian function on this state space,
  \begin{multline}\label{Lap}
\E\left(\exp\left(-\int F(u,x){\cal N}(\diff u, \diff x)\right)\right)\\=\left(\exp\left(-\int\left(1{-}e^{-F(u,x)}  m_{\cal N}(\diff u,\diff x)\right)\right)\right)
  \end{multline}
  
 If the system starts empty at time $0$, for $t{\ge}$ then it is easy to see that the  state $A(t)$ of the Markov process  at time $t$ as the same distribution as $(A_1(t),A_2(t),A_3(t)$ with
\begin{multline*}
  A_2(t)=\sum_{n{\ge}0}\ind{t_n{\le}t{<} t_n{+}L_{2}^{n}}  =\int_{[0,t]{\times}\N^3}\ind{u{\le}t{<}u{+}x_2}{\cal N}(\diff u,\diff x)\\
  \steq{dist}\int_{[-t,0]{\times}\N^3}\ind{u{\le}0{<}u{+}x_2}{\cal N}(\diff u,\diff x)\stackrel{t\to{+}\infty}{\longrightarrow}
  \int_{[-\infty,0]{\times}\N^3}\ind{u{\le}0{<}u{+}x_2}{\cal N}(\diff u,\diff x)\\ \steq{dist}  \int_{[0,+\infty]{\times}\N^3}\ind{u{\le}x_2}{\cal N}(\diff u,\diff x),
\end{multline*}
the two relations in distribution of this relation are a consequence of the invariance of the distribution of ${\cal N}$ by the two transformations of $\R{\times}\N^2$, $(s,x){\mapsto}(s-t,x)$ and $(s,x){\mapsto}(-s,x)$. See Chapter~1 of~\citet{Robert}.

Similarly for $i{=}1$, $3$
\begin{multline*}
  A_i(t)=\int_{[0,t]{\times}\N^3}\ind{u{+}x_2{\le}t{<}u{+}x_2{+}x_i}{\cal N}(\diff u,\diff x)\\
  \steq{dist}\int_{[-t,0]{\times}\N^3}\hspace{-1cm}\ind{u{+}x_2{\le}0{<}u{+}x_2{+}x_i}{\cal N}(\diff u,\diff x)
  \stackrel{t\to{+}\infty}{\longrightarrow}  \int_{[-\infty,0]{\times}\N^3}\hspace{-1cm}\ind{u{+}x_2{\le}0{<}u{+}x_2{+}x_i}{\cal N}(\diff u,\diff x)\\
\steq{dist} \int_{[0,{+}\infty]{\times}\N^3}\ind{x_2{\le}u{<}x_2{+}x_i}{\cal N}(\diff u,\diff x).
\end{multline*}

We have therefore that $(A(t))$ converges in distribution to $(A_1(\infty),A_2(\infty),A_3(\infty))$, with, for $i{\in}\{1,2,3\}$ and $j{\in}\{2,3\}$,
\[
A_i(\infty)=\int f_i(u,x){\cal N}(\diff u, \diff x), \text{ with } f_2(u,x){=}\ind{0\le u{<}x_2},  f_{j}(u,x){=}\ind{x_2{\le}u{<}x_2{+}x_j}.
\]
Since, for $j{\in}\{2,3\}$, $f_2f_j{\equiv}0$, the independence properties of Poisson properties imply that $A_2(\infty)$ is independent of $(A_1(\infty),A_3(\infty))$. Since $(A_2(t))$ has the same distribution as an $M/M/\infty$ process with input rate $\lambda$ and service rate $\mu_2$, its invariant distribution, the law of $A_2(\infty)$,  is Poisson with parameter $\lambda/\mu_2$.

The Laplace transform of $(A_1(\infty),A_3(\infty))$ at $(\xi_1,\xi_3)$ is
\begin{multline*}
  \E\left(e^{-\xi_1A_1(\infty)-\xi_3A_3(\infty)}\right)\\
  =\E\left(\exp\left(-\int_{\R_+{\times}\N^3}(\xi_1f_1(u,x)+\xi_3f_3(u,x)){\cal N}(\diff u, \diff x)\right)\right).
\end{multline*}
We can now use Relation~\eqref{Lap} to get that
\begin{multline*}
\E\left(e^{-\xi_1A_1(\infty)-\xi_3A_3(\infty)}\right)=
  \exp\left(\left(1-e^{-\xi_1-\xi_3}\right)\frac{\lambda}{\mu_1{+}\mu_3}\right.\\\left. {+}\left(1-e^{-\xi_1}\right)\frac{\lambda\mu_3}{\mu_1(\mu_1{+}\mu_3)}{+}\left(1-e^{-\xi_3}\right)\frac{\lambda\mu_1}{\mu_3(\mu_1{+}\mu_3)}\right).
\end{multline*}
The proposition is proved.
\end{proof}

We can now establish our main stability result when there is no regulation. 
\begin{theorem}[Stability]\label{Stab-0}
Under the scaling assumptions~\eqref{Scal},  if $\k0q{>}\kil $, and the initial state is $x_N(0)=(r,\ell,q_N,u){\in}{\cal S}_N$, such that
  \[
  \lim_{N\to+\infty} \frac{q_N}{N}=q_0{>}0,
  \]
  and $\Lambda_N{\in}{\cal M}(\N^3)$ is the occupation measure of $(R_N(t),L_N(t),U_N(t))$,  then, for the convergence in distribution,
  \[
  \lim_{N\to+\infty} \left(\left(\frac{Q_N(t)}{N}\right),\Lambda_N\right)=((q(t)),\Lambda_\infty),
  \]
  where $(q(t))$ is the solution of the ODE,
  \[
  \dot{q}(t){=}\k0q {-}\kil {-}\kq0 q(t),
  \]
  starting at $q_0$, and $\Lambda_\infty$ is the measure on $\R_+{\times}\N^3$ defined by, for  $F{\in}{\cal C}_c(\R_+{\times}\N^3)$,
\begin{multline}\label{eqr0}
  \croc{\Lambda_\infty,F}=\int_0^{+\infty} \int_{\N^4} F(t,a{+}v,b,a{+}w)  
\Pois{\frac{\kil }{C_\kappa}} (\diff a)\\
  \times      
\Pois{\frac{\kil }{\klr C_U}} (\diff b)
\Pois{\frac{\kil \kq0 q(t)}{\kri (C_M{-}1)C_\kappa}} (\diff v)
\Pois{\frac{\kil\kri (C_M{-}1) }{\kq0 q(t)C_\kappa}} (\diff w)
\diff t,
\end{multline}
with $C_\kappa{=}\kri(C_M{-}1){+}\kq0$. 
\end{theorem}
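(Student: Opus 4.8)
The slow coordinate is the scaled resource process $\overline{Q}_N\steq{def}Q_N/N$, whereas $(R_N,L_N,U_N)$ are fast, each jumping at a rate of order $N$ on the time scale of $\overline{Q}_N$. Integrating the equation~\eqref{SDEwo} for $Q_N$ and dividing by $N$ gives
\[
\overline{Q}_N(t)=\overline{Q}_N(0)+M_N(t)+\int_0^t\left(\k0q-\kq0\overline{Q}_N(s)-\kqu\overline{Q}_N(s)U_N(s)\right)\diff s,
\]
where $(M_N(t))$ is a martingale whose predictable bracket is of order $1/N$, hence negligible. Everything is concentrated in the consumption term $\kqu\overline{Q}_N(s)U_N(s)$: once $(R_N,L_N,U_N)$ is known to relax quickly to the equilibrium of a fast dynamics in which $\overline{Q}_N\approx q(s)$ is frozen, an ergodic argument should replace $U_N(s)$ by its mean under this equilibrium. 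The plan is to show this mean equals $\kil/(\kqu q(s))$, so that the consumption term averages to the constant $\kil$ — a manifestation of flow conservation in the fast system, where in equilibrium the production rate of $P$-particles, the input $\kil N$ of the initiation reaction, and the depletion rate of free $U$-particles all coincide. The limit $(\overline{Q}_N)$ is then the solution of the autonomous linear ODE $\dot q=\k0q-\kil-\kq0 q$.

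The main obstacle, and the first step, is to control the sample paths of the fast coordinates, i.e.\ to prove that for a suitable $\delta\in(0,1)$ the stopping time
\[
\tau_N\steq{def}\inf\left\{t>0:\max(R_N(t),L_N(t),U_N(t))\ge N^\delta\right\}
\]
satisfies $\lim_N\P(\tau_N\le T)=0$. The difficulty is the circular coupling between the coordinates: $L_N$ stays small only because $U^0_N-U_N\sim C_UN$, i.e.\ because $U_N$ is small; $U_N$ stays small only because its input is proportional to $L_N$ and its per-capita service rate $\kqu Q_N$ is of order $N$; and $R_N$ is controlled through $L_N$. I would break this loop on $[0,\tau_N)$ by a staggered bootstrap with two exponents $\delta'<\delta$. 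Since $Q_N(0)\sim q_0N$ with $q_0>0$ and $Q_N$ decreases only through rates of order $N$, a lower bound $Q_N\ge cN$ holds up to some $T_0$ with high probability. On $\{U_N<N^\delta\}$ the process $L_N$ is stochastically dominated by an $M/M/\infty$ queue with input $\kil N$ and per-capita service rate $\klr(U^0_N{-}N^\delta)\sim\klr C_UN$, which does not reach level $N^{\delta'}$ before time $T$ by the hitting-time estimates of Proposition~\ref{MMIHit}. On $\{L_N<N^{\delta'}\}\cap\{Q_N\ge cN\}$, $U_N$ is in turn dominated by an $M/M/\infty$ queue with input of order $N^{1+\delta'}$ and per-capita service rate $\kqu cN$, whose stationary size is of order $N^{\delta'}\ll N^\delta$, and $R_N$ is handled identically through its pairing rate $\sim\kri(C_M{-}1)N$. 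Combining these comparisons keeps the three coordinates below $N^\delta$ on $[0,T]$, so $\P(\tau_N\le T)\to0$. This is where Propositions~\ref{MMIHit}, \ref{Prop0} and~\ref{Prop1} do the real work, the last one being exactly the perturbed scaled $M/M/\infty$ queue governing $L_N$.

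With this control, tightness is standard: the occupation measure $\Lambda_N$ of $(R_N,L_N,U_N)$ is tight by Lemma~1.3 of~\citet{Kurtz}, as in the proof of Proposition~\ref{Prop0}, and $(\overline{Q}_N)$ is tight since its drift is bounded and $M_N\to0$. To identify $\Lambda_\infty$, I would freeze the slow variable at a value $q$: after the substitutions $U^0_N-U_N\sim C_UN$, $M^0_N-(N{-}R_N)\sim(C_M{-}1)N$ and $Q_N\sim qN$, the fast generator of $(R_N,L_N,U_N)$ is exactly that of the Markov process of Proposition~\ref{FastInv} with $\lambda=\kil$, $\mu_L=\klr C_U$, $\mu_R=\kri(C_M{-}1)$ and $\mu_U=\kqu q$ the depletion rate of free $U$-particles. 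That proposition identifies its invariant law as that of $(X{+}Y_1,Z,X{+}Y_2)$ for the four independent Poisson variables, which is precisely the product measure $\pi_q$ appearing in~\eqref{eqr0}. Passing to the limit in the martingale problem associated with $\Lambda_N$, following the averaging method of~\citet{Kurtz} already used for Propositions~\ref{Prop0} and~\ref{Prop1}, shows that any limit point of $\Lambda_N$ disintegrates as $\diff t\otimes\pi_{q(t)}$.

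It remains to close the slow equation. Under $\pi_q$ the mean number of free $U$-particles is $\lambda/\mu_U=\kil/(\kqu q)$, hence the limiting consumption term is $\kqu q\cdot\kil/(\kqu q)=\kil$, and the drift of $(\overline{Q}_N)$ converges to $\k0q-\kil-\kq0 q$, giving the announced ODE. Its solution being unique, the whole sequence $((\overline{Q}_N(t)),\Lambda_N)$ converges to $((q(t)),\Lambda_\infty)$ with $\Lambda_\infty$ given by~\eqref{eqr0}. Finally, since the limit $q(t)$ is bounded below by a positive constant on $[0,T]$ — its equilibrium $(\k0q-\kil)/\kq0$ is positive and $q_0>0$ — the lower bound $Q_N\ge cN$ used in the a priori step is self-consistent on the whole interval, so no additional dynamical-system stability argument is needed in this stable regime.
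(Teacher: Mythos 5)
Your overall architecture --- stopping time on the fast coordinates, stochastic domination, tightness of the occupation measure, identification of the fast invariant law via Proposition~\ref{FastInv} with $\lambda{=}\kil$, $\mu_L{=}\klr C_U$, $\mu_R{=}\kri(C_M{-}1)$, $\mu_U{=}\kqu q$ (so that the mean of the $U$-coordinate is $\kil/(\kqu q)$ and the consumption term averages to $\kil$), closure of the slow ODE --- is the same as the paper's. The genuine gap is in the a priori control of the slow variable. Your staggered bootstrap bounds $U_N$ by an $M/M/\infty$ queue whose input rate is obtained by replacing $L_N$ with its worst-case value $N^{\delta'}$, which only yields $\sup_{t\le T}U_N(t)=O(N^{\delta'})$. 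A pointwise bound at that level is not strong enough for the three places where you use it. The consumption rate of $Q_N$ is $\kqu Q_N(t)U_N(t)$, hence of order $N^{1{+}\delta'}\gg N$ under your bound, so: (i) the claim that ``$Q_N$ decreases only through rates of order $N$'', hence $Q_N\ge cN$ up to a fixed $T_0$, is circular --- it presupposes that $U_N$ is $O(1)$, and with $U_N\sim N^{\delta'}$ the lower bound survives only on an interval of vanishing length $O(N^{-\delta'})$; (ii) the drift of $(\overline{Q}_N)$ is \emph{not} bounded, so its tightness does not follow as stated; (iii) passing to the limit in $\int_0^t\overline{Q}_N(s)U_N(s)\diff s$ means integrating the unbounded function $u$ against $\Lambda_N$, which requires a uniform first-moment (uniform integrability) estimate that a sup-norm bound at level $N^{\delta'}$ does not provide.

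What is actually needed is control of the \emph{time average}, $\int_0^tU_N(s)\diff s=O(t)$ with high probability, and this cannot be extracted by decoupling $L_N$ from $U_N$ through a worst-case pointwise substitution. One has to dominate the pair $(L_N,U_N)$ jointly by the two-dimensional Markov process of Proposition~\ref{Prop0}, in which the input rate of the second coordinate is proportional to $N$ times the \emph{current} value of the first; its invariant measure has finite first moments and Relation~\eqref{ErT} delivers precisely the $O(t)$ ergodic average. The paper packages this as the auxiliary process $(\widetilde X_N)$ of Step~1, coupled coordinatewise with $(X_N)$ up to the stopping time; the convergence of $(\widetilde Q_N/N)$, obtained from the modulus-of-continuity criterion together with Relation~\eqref{ErT}, is what shows both that $\tau_N\ge t_0$ with high probability and that the moment bounds needed in (i)--(iii) hold. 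Note also that even after this repair the convergence is first obtained only on an interval $[0,t_0]$ whose length is not at your disposal; the paper completes the argument by induction on $[kt_0,(k{+}1)t_0]$, using $\k0q{>}\kil$ to check that $q(t_0)$ remains above the threshold $\eta_0$. Your ``self-consistency'' remark points in the right direction but does need this iteration to be made rigorous.
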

A glance at the SDEs~\eqref{SDEwo} shows that, provided that the process stays in the subset of points similar to the initial point,  the three process $(R_N(t)$, $(L_N(t)$, $(L_N(t)$ are ``fast processes'' in the sense that their transition rates are of the order of $N$, and therefore with large fluctuations. The kinetics of $(Q_N(t)/N)$ are $O(1))$, it will be referred to as a ``slow variable''. The main challenge of the proof of the convergence of the theorem, is of controlling the fluctuations of the fast processes so that the process stays indeed in the subset of points similar to the initial point. 
\begin{proof}
 We first give a sketch of the strategy of the proof.   We first introduce a stopping time $\tau_N$ before that either the fast process $(U_N(t)/N)$ is ``too large'' or the slow process $(Q_N(t)/N)$ is ``too small''. On the time interval $[0,\tau_N]$ an auxiliary process $(\widetilde{X}_N(t)){=}(\widetilde{R}_N(t),\widetilde{L}_N(t),\widetilde{Q}_N(t),\widetilde{U}_N(t))$ is constructed with the following  property: it has three fast processes $(\widetilde{R}_N(t))$, $(\widetilde{L}_N(t))$, $(\widetilde{U}_N(t))$ are respectively greater than our fast processes $(R_N(t))$, $(L_N(t))$, $(U_N(t))$ and the slow variable $(\widetilde{Q}_N(t)/N)$ is smaller that $(Q_N(t)/N)$. A convergence result is established for $(\widetilde{X}_N(t))$ that will show that there exists $t_0{>}0$ such that $\tau_N{\ge}t_0$ with high probability and also a more precise control of the fast processes:   the sequence of processes $(R_N(t)/\sqrt{N})$, $(L_N(t)/\sqrt{N})$, $(U_N(t)/\sqrt{N})$ are bounded in distribution on $[0,t_0]$. From there, with the help of Propositions~\ref{Prop0} and~\ref{Prop1} and with tightness arguments for occupation measures and $(Q_N(t)/N)$, one obtains the desired convergence result on $[0,t_0]$. With the condition $\k0q{>}\kil $, the procedure can repeated on the time interval $[t_0,2t_0]$, and therefore on the whole half-line.

 We fix $T{>}0$, the time interval for the convergence in distribution is $[0,T]$.

 \bigskip
 
 \noindent
{\sc Step 1: Coupling.}\\
Let $\eta{\in}(0,(q_0{\wedge}1)/4)$ and define 
\[
\tau_N\steq{def}\inf\{t>0: Q_N(t)\le \eta N \text{ or } U_N(t)\ge \eta N \}.
\]
Let $(\widetilde{X}_N(t)){=}(\widetilde{R}_N(t),\widetilde{L}_N(t),\widetilde{Q}_N(t),\widetilde{U}_N(t))$ be the solution of the SDE,
\[
\begin{cases}
\diff \widetilde{R}_N(t)&={\cal P}_{\klr}\left(\left(\rule{0mm}{4mm}0,\overline{c}_uN\widetilde{L}_N(t{-})\right),\diff t\right){-}{\cal P}_{\kri}\left(\left(\rule{0mm}{4mm}0,(\underline{c}_M{-}1)N\widetilde{R}_N(t{-})\right),\diff t\right),\\
\diff \widetilde{L}_N(t)&={\cal P}_{\kil}\left(\rule{0mm}{4mm}(0,N),\diff t\right) 
{-}{\cal P}_{\klr}\left(\left(\rule{0mm}{4mm}0,(\underline{c}_u-\eta)N\widetilde{L}_N(t{-})\right),\diff t\right),\\
\diff \widetilde{Q}_N(t)&={\cal P}_{\k0q}\left(\left(\rule{0mm}{4mm}0,N\right),\diff t\right) 
{-}{\cal P}_{\kqu}\left(\left(\rule{0mm}{4mm}0,\widetilde{Q}_N(t{-})\widetilde{U}_N(t{-})\right),\diff t\right)\\
& \qquad {-}{\cal P}_{\kq0}\left(\left(\rule{0mm}{4mm}0,\widetilde{Q}_N(t{-})\right),\diff t\right),\\
\diff \widetilde{U}_N(t)
&={\cal P}_{\klr}\left(\left(\rule{0mm}{4mm}0,\overline{c}_uN\widetilde{L}_N(t{-})\right),\diff t\right) {-}{\cal P}_{\kqu}\left(\left(\rule{0mm}{4mm}0,\eta N \widetilde{U}_N(t{-})\right),\diff t\right),
\end{cases}
\]
with $\widetilde{X}_N(0){=}(r,\ell,q_N,u)$, where $\underline{c}_u=C_U/2$, $\underline{c}_m{=}C_M/2$ and $\overline{c}_u{=}2C_U$

Using these SDEs and Relations~\eqref{SDEwo} and with a simple, but careful, induction  on the sequence of instants of jumps of the process $(X_N(t),\widetilde{X}_N(t))$, one can show that, if $N$ is sufficiently large, the relations
\begin{equation}\label{eqcoup}
R_N(t)\le \widetilde{R}_N(t),\quad L_N(t)\le \widetilde{L}_N(t),\quad  U_N(t) \le \widetilde{U}_N(t), \quad Q_N(t) \ge \widetilde{Q}_N(t),
\end{equation}
hold for all $t{\le}\tau_N$. 

\bigskip

\noindent
{\sc Step 2: Analysis of $(\widetilde{X}_N(t))$}\\
With the same method used in the proof of Proposition~\ref{Prop0}, we obtain that the sequence of occupation measures $\widetilde{\mu}_N$ of $(\widetilde{R}_N(t),\widetilde{L}_N(t),\widetilde{U}_N(t))$ is converging in distribution to $\widetilde{\mu}_\infty{=}\diff t{\otimes}\widetilde{\pi}$, where $\widetilde{\pi}$ is the invariant distribution of the Markov process on $\N^3$ whose $Q$-matrix is given by, for $x{=}(r,\ell,u){\in}\N^3$,
  \[
x{\longrightarrow} x{+}
\quad
\begin{cases}
\e{R}{+}\e{U}{-}\e{L}, &\klr(\overline{c}_u{-}\eta),\\
\e{R}{+}\e{U}, &\klr \eta,
\end{cases}
\quad
\begin{cases}
{-}\e{R},&\kri (\underline{c}_m{-}1)r,\\
{-}\e{U},&\kqu u,\\
\e{L},&\kil.
\end{cases}
\]
Furthermore the first moments of $\widetilde{\pi}$ are finite and the convergence of $\croc{\widetilde{\mu}_N,f{\otimes}h}$ to $\croc{\widetilde{\mu}_\infty,f{\otimes}h}$ holds for all $f{\in}{\cal C}_c(\R_+)$ and $h{\in}L_1(\pi)$.

The SDE for $(\widetilde{Q}_N(t))$ gives the relation
\begin{equation}\label{eqs1}
\widetilde{Q}_N(t)=q_N+\widetilde{M}_N(t)+\kil Nt-\kqu\int_0^t \widetilde{Q}_N(s)\widetilde{U}_N(s)\diff s -\kq0\int_0^t  \widetilde{Q}_N(s)\diff s,
\end{equation}
where $(\widetilde{M}_N(t))$ is a martingale whose previsible increasing process is given by
\[
\left(\croc{\widetilde{M}_N}(t)\right)=\left(\kil Nt+\kqu\int_0^t \widetilde{Q}_N(s)\widetilde{U}_N(s)\diff s+\kq0\int_0^t  \widetilde{Q}_N(s)\diff s\right).
\]
From the SDE for the process $(\widetilde{Q}_N(t))$, one has the relation, for $t{\ge}0$, 
\[
\widetilde{Q}_N(t)\leq q_N{+}{\cal P}_{\k0q}\left(\left(\rule{0mm}{4mm}0,N\right),[0,t]\right).
\]
We get that, for any $\eps{>}0$, there exists $K_0$ such that, for $N$ sufficiently large, the relation $\P({\cal F}_N^1){\ge}1{-}\eps$ holds where ${\cal F}_N^1$ is the event 
\begin{equation}\label{eqr52}
{\cal F}_N^1\steq{def} \left\{\sup_{s\leq T}\frac{\widetilde{Q}_N(s)}{N}\le K_0\right\}.
\end{equation}
Let
\[
H_N=\inf\left\{t{\ge}0: \widetilde{U}_N(t) \ge \sqrt{N} \text{ or } \widetilde{Q}_N(t)\ge K_0 N\right\},
\]
then, by Doob's Inequality the process $(M_N(t{\wedge}H_N)/N)$ is converging in distribution to $(0)$. Proposition~\ref{Prop0} show that the sequence $\P(H_N{\le}T)$ is converging to $0$, and therefore $(M_N(t))$ is converging in distribution to $(0)$ on the time interval $[0,T]$.

For $s{\le}t\leq T$ and $\delta{>}0$, Relation~\eqref{eqs1} gives that, on the event ${\cal F}_N^1$, the relation
\begin{multline}\label{eqs2}
\sup_{\substack{s{\leq}t\leq T\\|s-t|\leq \delta}} \left|\frac{\widetilde{Q}_N(t)}{N}{-}\frac{\widetilde{Q}_N(s)}{N}\right|\\\leq
2\sup_{s\leq T} |\widetilde{M}_N(s)| +\kil Nt+\kqu K_0 \sup_{\substack{s{\leq}t\leq T\\|s-t|\leq \delta}}\frac{1}{N}\int_s^t \widetilde{U}_N(u)\diff u
+\kq0 K_0\delta
\end{multline}
holds.  Relation~\eqref{ErT} of Proposition~\ref{Prop0} gives that the sequence of processes
\[
\left(\frac{1}{N}\int_0^t \widetilde{U}_N(u)\diff u\right)
\]
is converging in distribution and is therefore tight.  Using the criterion of the modulus of continuity, see Theorem~7.3 of~\citet{Billingsley}, Relation~\eqref{eqs2} gives  that the sequence of processes $(\widetilde{Q}_N(t)/N)$ is tight and, due to the convergence of $(\widetilde{\mu}_N)$, if $(\widetilde{q}(t))$ is a limiting point  of  $(\widetilde{Q}_N(t)/N)$ then it satisfies the relation 
\[
\widetilde{q}(t)=q_0{+}\kil t-\left(\kqu\int_{\N^3}u\widetilde{\pi}(\diff \ell, \diff q, \diff u){-}\kq0\right) \int_0^t \widetilde{q}(s) \diff s,
\]
which implies the convergence in distribution of $(\widetilde{Q}_N(t)/N)$. We conclude from the convergence of $(\widetilde{\mu}_N,(\widetilde{Q}_N(t)/N))$ and Proposition~\ref{Prop0}  that there exists some $0{<}t_0{\le}T$ and $\eta_0{>}0$ such that if  $q_0{>}\eta_0$ and 
\begin{equation}\label{eqr1}
{\cal F}_N^2\steq{def}\left\{\sup_{s\le t_0} \frac{\widetilde{Q}_N(s)}{N}\ge \eta_0, \sup_{s\le t_0} \frac{\widetilde{R}_N(s){+}\widetilde{L}_N(s){+}\widetilde{U}_N(s)}{\sqrt{N}}\le 1\right\},
\end{equation}
then the relation
\[
  \lim_{N\to+\infty}\P\left({\cal F}_N^2 \right)=1
  \]
  holds. In particular, we have
  \[
  \lim_{N\to+\infty} \P\left(\tau_N\le t_0\right)=0,
  \]
  the coupling relation~\eqref{eqcoup} holds on the time interval $[0,t_0]$ with high probability. 
  From now on it is assumed that $q_0{>}\eta_0$.

\bigskip
\noindent
{\sc Step 3: Convergence}\\
In the same way as in Step~2, with Relation~\eqref{eqr1}, we get that, for any $\eps{>}0$, there exists $K_0$ such that, for $N$ sufficiently large, the relation $\P({\cal E}_N){\ge}1{-}\eps$ holds with 
\begin{equation}\label{eqr5}
{\cal E}_N\steq{def} \left\{\frac{Q_N(t)}{N}\in [\eta_0,K_0], \forall t{\in}[0,t_0]\right\}.
\end{equation}
As in the proof of Proposition~\ref{Prop0},  on the event ${\cal E}_N$,  there exists a coupling such that $(U_N(t))$ is  upper bounded by a process $(Z_1(Nt))$, where $(Z_1(t))$ is an $M/M/\infty$ process on the time interval $[0,t_0]$.

We has seen that the coupling relation~\eqref{eqcoup} on the time interval $t_0$ holds with high probability.  Since the sequence of occupation measures of $(\widetilde{R}_N(t),\widetilde{L}_N(t),\widetilde{U}_N(t))$ is converging in distribution, the sequence of occupation measures $(\Lambda_N)$  of the processes $(R_N(t),L_N(t),U_N(t))$, restricted to $[0,t_0]{\times}\N^3$ is therefore tight. From Relation~\eqref{eqr1} and Proposition~\ref{Prop1},  any limiting point $\Lambda_\infty$  of this sequence  can be represented as, for $g{\in}{\cal C}_c(\R_+{\times}\N^3)$, 
\begin{equation}\label{eqr4}
\croc{\Lambda_\infty,g}=\int_0^{+\infty} \int_{\N^3}g(s,a)\pi_s(\diff a) \diff s,
\end{equation}
almost surely, where $(\pi_s)$ is an optional process with values in ${\cal P}(\N^3)$. Furthermore the convergence to $\Lambda_\infty$, along some subsequence,  holds for a function $g$ of the type $f{\otimes}h$, for $f{\in}{\cal C}_c(\R_+)$ and $h{:}\N^3{\to}\R_+$, such that $(|h(a)|/(a_1{+}a_2{+}a_3), a{=}(a_i){\in}\N^3{\setminus}\{0\})$ is bounded .

With Relation~\eqref{SDEwo}, we have, for $t{\ge}0$,
\begin{multline}\label{eqr3}
\overline{Q}_N(t)\steq{def}\frac{Q_N(t)}{N}=\frac{q_N}{N}+M^0_N(t)+ \k0q t
\\{-}\kqu\int_0^t\overline{Q}_N(u)U_N(u),\diff u
{-}\kq0\int_0^t\overline{Q}_N(u)\diff u,
\end{multline}
where $(\overline{M}_N(t)$ is a martingale.

By using the fact that the event ${\cal F}_N^2{\cap}{\cal E}_N$ has a probability arbitrarily close to $1$ when $N$ gets large , in the same way as in Step~2, we can show that  the martingale  $(\overline{M}_N(t)$ is vanishing and, with the criterion of the modulus of continuity, that  the sequence of processes $(\overline{Q}_N(t))$ is tight on the time interval $[0,t_0]$, and if $(q(t))$ is a limiting point, it is continuous on $[0,t_0]$ and lower bounded by $\eta_0{>}0$. By taking a convenient subsequence $(N_k)$ it can be assumed that $(\Lambda_{N_k},(\overline{Q}_{N_k}(t)))$ is converging in distribution to $(\Lambda_\infty,(q(t)))$ on the time interval $[0,t_0]$.

As in the proof of Proposition~\ref{Prop0}, by using the SDEs~\eqref{SDEwo} and by writing an equation for $h(R_N(t),L_N(t),U_N(t))$, for some function $h$ on $\N^3$ with finite support, we obtain that the measure $\Lambda_\infty$ can be almost surely be represented by Relation~\eqref{eqr4} where, for $s{\in}[0,t_0]$, $\pi_s{\in}{\cal P}(\N^3)$ is the invariant distribution of the Markov process on $\N^3$ whose $Q$-matrix is given by, for $x{=}(r,\ell,u){\in}\N^3$,
  \[
x{\longrightarrow} x{+}
\begin{cases}
\e{L},& \kil,\\
\e{R}{+}\e{U}{-}\e{L}, &\klr C_U \ell,
\end{cases}
\qquad
x{\longrightarrow} x{+}
\begin{cases}
{-}\e{R},&\kri (C_M{-}1)r,\\
{-}\e{U},&\kqu q(s)u.
\end{cases}
\]
This $3$-dimensional process can be seen as a network of three  $M/M/\infty$ queues, for the nodes $L$, $R$ and $U$,  with the following specific dynamic: When a job leaves  node $L$, it generates two simultaneous arrivals at node $R$ and $U$ respectively. See Figure~\ref{FigF1}. 
\begin{center}
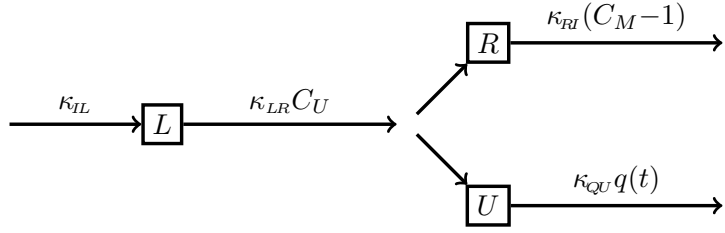
\begin{figure}[H]
\resizebox{10cm}{!}{%
\begin{tikzpicture}[->,node distance=10mm]

\node(M) at (-4,0){};
\node[black, very thick,rectangle,draw](L) at (-2,0){$L$};
\node(L0) at (1,0){};
\node[black, very thick, rectangle,draw](R) at (2,1){$R$};
\node (R0) at (5,1){};
\node[black, very thick, rectangle,draw](U) at (2,-1){$U$};
\node (U0) at (5,-1){};

\path (M) edge [black,very thick,left,midway,above] node {$\kil$} (L);
\path (L) edge [black,very thick,left,midway,above] node {$\klr C_U$} (L0);
\path (L0) edge [black,very thick,left,midway,above] node {} (R);
\path (L0) edge [black,very thick,left,midway,above] node {} (U);
\path (U) edge [black,very thick,left,midway,above] node {$\kqu q(t)$} (U0);
\path (R) edge [black,very thick,left,midway,above] node {$\kri(C_M{-}1)$} (R0);
\end{tikzpicture}}
\caption{Stable Case: $M/M/\infty$ network for fast processes}\label{FigF1}
\end{figure}
\end{center}

By Proposition~\ref{FastInv}, for $f{\in}{\cal C}([0,t_0])$,  almost surely the relation
\[
\int_0^{t_0}\int_{\N^3} f(s) u \pi_s(\diff r, \diff \ell, \diff u)=\int_0^{t_0}\int_{\N^3} f(s) \frac{\kil }{\kqu  q(s)}\diff s
\]
holds. Recall that $q(s){\ge}\eta_0$ for all $s{\leq}t_0$. See Relation~\eqref{eqr5}. 

By taking the limit in Relation~\eqref{eqr3} along the subsequence $(N_k)$, one obtains that the relation
\[
q(t)=q_0+(\k0q -\kil)q t-\kq0\int_0^t q(s)\diff s
\]
holds almost surely for $t{\le}t_0$, and therefore the convergence in distribution of $(\Lambda_N(t),(\overline{Q}_N(t)))$
to $(\Lambda_\infty,(q(t)))$ on the time interval $[0,t_0]$. The fixed point of the above ODE is $q_\infty{=}1{-}\kil/\k0q$. The condition $\kil{<}\k0q$ gives that  if $q_0{\in}(\eta_0,q_\infty)$ then $q(t_0){>}\eta_0$, and if $\eta_0{\ge}q_\infty$ then $q(t_0){\geq}q_\infty$. From this instant, we can obtain the convergence result on the time interval $[t_0,2t_0]$ and, by induction, on $[0,T]$. The proposition is proved. 
\end{proof}

The following corollary is quite intuitive, it just states that in this stable setting the production rate of $P$-particles is maximal.
Consequently, a fraction $\k0q{-}\kil$ of $Q$-particles is degraded by Reaction~\eqref{Ress-eq}. 
\begin{corollary}[Production of $P$-particles]\label{Corol-Stab}
  Under the assumptions of Theorem~\ref{Stab-0}, then for the convergence in distribution
  \[
  \lim_{N\to+\infty}\left(\frac{P_N(t)}{N}\right)=(\kil t)
  \]
 where $(P_N(t))$ is the process defined by Relation~\eqref{Prott}
\end{corollary}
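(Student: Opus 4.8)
The plan is to exploit the semimartingale decomposition of the counting process $(P_N(t))$ and to identify its compensator through the occupation measure convergence already obtained in Theorem~\ref{Stab-0}. By Relation~\eqref{Prott}, $(P_N(t))$ is the counting process with stochastic intensity $\klr L_N(t)(U^0_N{-}U_N(t))$, so that
\[
P_N(t)=M_N^P(t)+A_N(t),\qquad A_N(t)\steq{def}\klr\int_0^t L_N(s)(U^0_N{-}U_N(s))\diff s,
\]
where $(M_N^P(t))$ is a local martingale with predictable increasing process $\croc{M_N^P}(t){=}A_N(t)$. First I would analyse the compensator $A_N(t)$. On the good events constructed in the proof of Theorem~\ref{Stab-0}, the bound $\sup_{s\le T}U_N(s)/\sqrt N\le 1$ holds with high probability (obtained on each interval $[kt_0,(k{+}1)t_0]$ and concatenated up to $T$), hence, together with $U^0_N/N{\to}C_U$, one has $\sup_{s\le T}|(U^0_N{-}U_N(s))/N-C_U|\to 0$ in probability. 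Writing
\[
\frac{A_N(t)}{N}=\klr C_U\int_0^t L_N(s)\diff s+\klr\int_0^t L_N(s)\left(\frac{U^0_N{-}U_N(s)}{N}-C_U\right)\diff s,
\]
the last term is bounded by a vanishing supremum times $\int_0^t L_N(s)\diff s$, and so is negligible once the latter integral is shown to converge.

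Second, I would identify $\lim_N\int_0^t L_N(s)\diff s$. This quantity is $\croc{\Lambda_N,\mathbbm{1}_{[0,t]}{\otimes}h}$ with $h(r,\ell,u){=}\ell$, a function of linear growth for which the extended convergence of $(\Lambda_N)$ established in Theorem~\ref{Stab-0} applies, after approximating $\mathbbm{1}_{[0,t]}$ by continuous functions---legitimate since $\Lambda_\infty(\diff s,\N^3)$ is Lebesgue measure and has no atom at $t$. Under $\Lambda_\infty$ the $\ell$-coordinate has distribution $\Pois{\kil/(\klr C_U)}$, independently of the time variable, so its mean is $\kil/(\klr C_U)$ and
\[
\int_0^t L_N(s)\diff s\xrightarrow[N\to+\infty]{}\frac{\kil}{\klr C_U}\,t.
\]
Therefore $A_N(t)/N$ converges in distribution to $\klr C_U\cdot(\kil/(\klr C_U))\,t=\kil t$.

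Finally, the martingale term is handled with Doob's inequality: since $\croc{M_N^P}(t){=}A_N(t)$ and $A_N(t)/N$ converges (hence is bounded in probability), one has $\croc{M_N^P/N}(t){=}A_N(t)/N^2{\to}0$, so that $(M_N^P(t)/N)$ converges in distribution to $(0)$, uniformly on compact time intervals; a localizing sequence of stopping times removes any integrability concern. Combining the two contributions, and using that the convergences of $(\Lambda_N)$ and $(Q_N(t)/N)$ in Theorem~\ref{Stab-0} are joint, yields the convergence in distribution of the processes $(P_N(t)/N)$ to $(\kil t)$. The only delicate point is the treatment of the $O(N)$ factor $U^0_N{-}U_N(s)$ inside the compensator: it requires both the uniform smallness of $U_N(s)/N$ from the sample-path estimates of Theorem~\ref{Stab-0} and the transfer of the occupation-measure convergence to the unbounded test function $\ell$; everything else is routine.
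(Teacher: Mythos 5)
Your proposal is correct and follows essentially the same route as the paper: the same martingale-plus-compensator decomposition of $(P_N(t))$, Doob's inequality to kill the martingale part, and the occupation-measure convergence of Theorem~\ref{Stab-0} (extended to the linear-growth test function $\ell$, together with $U_N(t)/N\to 0$) to identify the limit $\klr C_U\cdot\kil/(\klr C_U)\,t=\kil t$. You merely spell out more explicitly the splitting of the factor $(U^0_N{-}U_N(s))/N$ into $C_U$ plus a uniformly vanishing remainder, which the paper leaves implicit.
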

\begin{proof}
  By using Doob's Inequality, it is not difficult to prove that
\[
\left( \frac{P_N(t)}{N}{-}   \klr\int_0^tL_N(s)\left(\frac{U^0_N}{N}{-}\frac{U_N(s)}{N}\right)\diff s\right)
  \]
  is a martingale converging in distribution to $(0)$.

  The convergence of the occupation measure $(\Lambda_N)$ of Theorem~\ref{Stab-0} and the fact that $(U_N(t)/N)$ converges in distribution to $(0)$   give the relation
\begin{multline*}
  \lim_{N\to+\infty}\left( \klr\int_0^tL_N(s)\left(\frac{U^0_N}{N}{-}\frac{U_N(s)}{N}\right)\diff s\right)\\
  =  \left( \klr\int_0^t\int_{\N} b C_U \Pois{\frac{\kil }{\klr C_U}} (\diff b)\diff s\right)=(\kil t)
\end{multline*}

\end{proof}
\begin{theorem}[Under-Loaded System]\label{Sub-0}
Under the scaling assumptions~\eqref{Scal},   if $\k0q{<}\kil$ and  $x_N(0){=}(r,\ell_N,q,U^0_N{-}u)$, with $r$, $q$, $u{\in}\N$ and
  \[
  \lim_{N\to+\infty} \frac{\ell_N}{N}=\ell_0{\in}(0,1),
  \]
  and  $\Lambda_N{\in}{\cal M}(\N^3)$ is the occupation measure of $(R_N(t),Q_N(t),U^0_{N}{-}U_N(t))$, then, for the convergence in distribution,
  \[
  \lim_{N\to+\infty} \left(\left(\frac{L_N(t)}{N}\right),\Lambda_N\right)=((\ell(t)),\Lambda_\infty),
  \]
  where $(\ell(t))$ is the solution of the ODE,  $\dot{\ell}(t){=}\kil (1{-}\ell(t)){-}\k0q $,  starting at $\ell_0$, and $\mu_\infty$ is the measure defined by
\begin{multline*}
 \croc{\Lambda_\infty,F} =\int_0^{+\infty} \int_{\N^3} F(t,r,q,v)\times\\ \Pois{\frac{\k0q }{\kri(C_M{-}1)}} (\diff r)
      \Pois{\frac{\k0q }{\kqu  C_U}} (\diff q)
      \Pois{\frac{\k0q}{\klr  \ell(t)}} (\diff v)\diff t,
\end{multline*}
for any function $F{\in}{\cal C}_c(\R_+{\times}\N^3)$,  .
\end{theorem}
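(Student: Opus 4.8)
The proof follows the three-step scheme of the proof of Theorem~\ref{Stab-0}, with the roles of the coordinates adapted to the under-loaded regime: the slow variable is now $(L_N(t)/N)$ and the fast vector is $(R_N(t),Q_N(t),U^0_N{-}U_N(t))$, the free $U$-particles satisfying $U_N(t){\approx}C_UN$. The self-consistent mean-field picture is that, for $L_N/N{\approx}\ell$, the fast process is (after speeding up time by a factor $N$) a \emph{series} of three $M/M/\infty$ queues: external $\Pois{\k0q}$ arrivals feed the $Q$-queue (service rate $\kqu C_U$); its departures (the ${\cal U}{+}{\cal Q}{\to}{\cal UQ}$ reaction) feed the $(U^0_N{-}U_N)$-queue (service rate $\klr\ell$); its departures (the production reaction) feed the $R$-queue (service rate $\kri(C_M{-}1)$), whose customers then leave. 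The degradation term $\kq0 Q_N$ is $O(1)$ and hence negligible on the fast $O(N)$ timescale. For such a tandem of infinite-server queues the stationary law is the product of three Poisson distributions with parameters $\k0q/(\kri(C_M{-}1))$, $\k0q/(\kqu C_U)$ and $\k0q/(\klr\ell)$; this can be seen by adapting the marked-Poisson-point-process computation of Proposition~\ref{FastInv}, where now the three sojourn intervals of a tagged customer occupy disjoint age-ranges, so the three occupation counts are automatically independent Poisson variables. This is exactly the integrand defining $\Lambda_\infty$.

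\emph{Step 1 (coupling).} I fix a compact interval $(a',b')\subset(0,1)$ containing the whole limiting trajectory $\{\ell(t):t\le T\}$ (which is compact in $(0,1)$ since the linear ODE~\eqref{eq-ell} drives $\ell_0\in(0,1)$ monotonically to $\eta{=}1{-}\k0q/\kil\in(0,1)$), fix $\zeta>0$ small, and set
\[
\tau_N = \inf\big\{t > 0 : L_N(t)/N \notin (a',b') \text{ or } U^0_N - U_N(t) \ge \zeta N\big\}.
\]
On $[0,\tau_N]$ the $L$-input rate $\kil(N{-}R_N{-}L_N)$ is squeezed between constant multiples of $N$, the $v$-departure rate $\klr L_N(U^0_N{-}U_N)$ between $\klr a'N(\cdot)$ and $\klr b'N(\cdot)$, and $U_N\in[(C_U{-}2\zeta)N,C_UN]$ for large $N$. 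As in Step~1 of Theorem~\ref{Stab-0}, by induction on the jump instants of the joint process I would build auxiliary processes with frozen worst-case constant rates that sandwich the true coordinates: upper bounds for the fast coordinates $R_N,Q_N,U^0_N{-}U_N$ and two-sided bounds for the slow $L_N$.

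\emph{Step 2 (the slow variable stays in the good region), and Step 3 (convergence).} The auxiliary fast process is an honest tandem $M/M/\infty$ network with constant rates; exactly as in Proposition~\ref{Prop0}, a linear Lyapunov function $a_1r{+}a_2q{+}a_3v$ (or the explicit product form) gives positive recurrence, finite first moments, and convergence of its occupation measure to $\diff t{\otimes}\widetilde\pi$. Compensating the SDE for $L_N$ and dividing by $N$,
\[
\frac{L_N(t)}{N} = \frac{\ell_N}{N} + M_N^L(t) + \kil\int_0^t\Big(1 - \frac{R_N(s)+L_N(s)}{N}\Big)\diff s - \klr\int_0^t \frac{L_N(s)}{N}\big(U^0_N-U_N(s)\big)\diff s,
\]
where $(M_N^L(t))$ vanishes in distribution by Doob's inequality and Relation~\eqref{Borne}-type control. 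Tightness of $(L_N/N)$ follows from the modulus-of-continuity criterion, and the averaging of the fast coordinate $U^0_N{-}U_N$ (mean $\k0q/(\klr\ell(s))$) against the joint limit of $(\Lambda_N,L_N/N)$ identifies every limit as the solution of $\dot\ell{=}\kil(1{-}\ell){-}\k0q$. Since $\ell([0,t_0])\subset(a',b')$, there is $t_0>0$ with $\P(\tau_N\le t_0)\to0$, so the coupling persists on $[0,t_0]$. There $\Lambda_N$ is tight (dominated by the convergent auxiliary measure), and writing the martingale equation for $h(R_N,Q_N,U^0_N{-}U_N)$ with $h$ of finite support and dividing by the $O(N)$ rates identifies, by the generator method of Propositions~\ref{Prop0} and~\ref{Prop1}, every limit $\pi_s$ as the invariant law of the frozen tandem network with $\ell$ replaced by $\ell(s)$, i.e. the product of Poissons above. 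As $\ell(t_0)\in(a',b')$, the argument reruns on $[t_0,2t_0]$ with the same interval and, by induction, on all of $[0,T]$.

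\emph{Main obstacle.} The genuinely new difficulty relative to Theorem~\ref{Stab-0} is the \emph{two-sided} control of the slow variable. There a one-sided bound on $Q_N$ (bounded away from $0$) sufficed, whereas here $L_N/N$ must be kept away from both $0$ and $1$, because the fast network degenerates at either end: the $v$-service rate $\klr\ell$ collapses as $\ell\to0$, while the $L$-input rate $\kil(N{-}R_N{-}L_N)$ ceases to be of order $N$ as $\ell\to1$. This forces a two-sided coupling and a sharper use of the stability of~\eqref{eq-ell} to keep $\tau_N$ bounded below; by contrast, establishing the product form of the fast network is, here, classical and lighter than Proposition~\ref{FastInv}, since the disjointness of the sojourn intervals yields independence directly.
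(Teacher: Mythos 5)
Your proposal is correct and follows essentially the same route as the paper: the same coupling/stopping-time scheme with an auxiliary sandwiching process, the same identification of the fast process as a tandem of three $M/M/\infty$ queues ($Q\to V\to R$) with product-form Poisson invariant law, and the same averaging and induction-on-$[kt_0,(k{+}1)t_0]$ argument. The only (harmless) deviation is your insistence on a two-sided control of $L_N(t)/N$; the paper's stopping time only keeps $L_N(t)/N$ bounded below and $U^0_N{-}U_N(t)$ of order $o(N)$, which suffices because the fast network's rates degenerate only through the service rate $\klr\ell$ of the $V$-queue as $\ell\to0$, not as $\ell\to1$.
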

\begin{proof}
The proof follows the same lines as the proof of Theorem~\ref{Stab-0}. We just give a  description of the coupling used, of the stopping time and finally of the associated fast processes. 

  First we define $(V_N(t)){=}(U^0_N{-}V_N(t))$ and the stopping time to consider in this case is 
  \[
\tau_N=\inf\left\{t{\ge}0: V_N(t){\ge}\eta N, \frac{L_N(t)}{N}{\le}\eta\right\}.
\]
An auxiliary process $(\widetilde{X}_N(t)){=}(\widetilde{R}_N(t),\widetilde{L}_N(t),\widetilde{Q}_N(t),\widetilde{V}_N(t))$ is introduced as the solution of the SDE
\[
\begin{cases}
\diff \widetilde{R}_N(t)&={\cal P}_{\klr}\left(\left(\rule{0mm}{4mm}0,N\widetilde{V}_N(t{-})\right),\diff t\right){-}{\cal P}_{\kri}\left(\left(\rule{0mm}{4mm}0,(M^0_N{-}N)\widetilde{R}_N(t{-})\right),\diff t\right),\\
  \diff \widetilde{L}_N(t)&={\cal P}_{\kil}\left(\left(\rule{0mm}{4mm}0,N{-}\widetilde{R}_N(t{-}){-}\widetilde{L}_N(t{-})\right),\diff t\right) \\
&\hspace{2cm}{-}{\cal P}_{\klr}\left(\left(\rule{0mm}{4mm}0,\widetilde{L}_N(t{-})\widetilde{V}_N(t{-})\right),\diff t\right),\\
  \diff \widetilde{Q}_N(t)&={\cal P}_{\k0q}\left(\left(\rule{0mm}{4mm}0,N\right),\diff t\right){-}{\cal P}_{\kqu}\left(\left(\rule{0mm}{4mm}0,\widetilde{Q}_N(t{-})(U^0_N{-}\eta N)\right),\diff t\right),\\
\diff \widetilde{V}_N(t)&={\cal P}_{\kqu}\left(\left(\rule{0mm}{4mm}0,\widetilde{Q}_N(t{-})U^0_N\right),\diff t\right){-}{\cal P}_{\klr}\left(\left(\rule{0mm}{4mm}0,\eta N\widetilde{V}_N(t{-})\right),\diff t\right),
\end{cases}
\]
with the initial condition $(R_N(0),L_N(0),Q_N(0),V_N(0))$.

Using these SDEs and Relations~\eqref{SDEwo} and with an induction on the sequence of instants of jumps of the process $(X_N(t),\widetilde{X}_N(t))$, one can show that the relations
\begin{equation}\label{eqcoup2}
R_N(t)\le \widetilde{R}_N(t),\quad L_N(t)\ge \widetilde{L}_N(t), \quad Q_N(t) \le \widetilde{Q}_N(t), \quad  V_N(t) \le \widetilde{V}_N(t),
\end{equation}
hold for all $t{\le}\tau_N$. 

The tightness of the process  $(\widetilde{R}_N(t),\widetilde{Q}_N(t),\widetilde{V}_N(t))$ is a straightforward consequence of Proposition~\ref{Prop0}. The tightness of $(\widetilde{L}_N(t)/N)$ follows from standard stochastic calculus along the same lines of the proof of Proposition~\ref{Prop1}.

Coming back to our process, the corresponding SDEs are
\[
\begin{cases}
  \diff R_N(t)&={\cal P}_{\klr}\left(\left(\rule{0mm}{4mm}0,L_N(t{-})V_N(t{-})\right),\diff t\right)\\
 &\hspace{2cm}{-}{\cal P}_{\kri}\left(\left(\rule{0mm}{4mm}0,(M^0_N{-}(N{-}R_N(t{-})))R_N(t{-})\right),\diff t\right),\\
  \diff Q_N(t)&={\cal P}_{\k0q}\left(\left(\rule{0mm}{4mm}0,N\right),\diff t\right) \\
 &\hspace{5mm} {-}{\cal P}_{\kqu}\left(\left(\rule{0mm}{4mm}0,Q_N(t{-})(U^0_N{-}V_N(t{-}))\right),\diff t\right)
{-}{\cal P}_{\kq0}\left(\left(\rule{0mm}{4mm}0,Q_N(t{-})\right),\diff t\right),\\
\diff V_N(t)&={\cal P}_{\kqu}\left(\left(\rule{0mm}{4mm}0,Q_N(t{-})(U^0_N{-}V_N(t{-}))\right),\diff t\right)\\
 &\hspace{2cm}{-}{\cal P}_{\klr}\left(\left(\rule{0mm}{4mm}0,L_N(t{-})V_N(t{-})\right),\diff t\right),
\end{cases}
\]
and the SDE for $(L_N(t))$ is 
\[
  \diff L_N(t)={\cal P}_{\kil}\left(\left(\rule{0mm}{4mm}0,N{-}R_N(t{-}){-}L_N(t{-})\right),\diff t\right) 
{-}{\cal P}_{\klr}\left(\left(\rule{0mm}{4mm}0,L_N(t{-})V_N(t{-})\right),\diff t\right).
\]
For $x{=}(r,q,v){\in}\N^2$, the $Q$-matrix associated to the fast process at time $t$,
  \[
x{\longrightarrow} x{+}
\begin{cases}
\e{R}-\e{V}, &\klr r\ell(t),\\
{-}\e{R},&\kri (C_M{-}1)r,\\
\end{cases}
\qquad
x{\longrightarrow} x{+}
\begin{cases}
\e{Q},& \k0q,\\
\e{V}{-}\e{Q},& \kqu C_U q.
\end{cases}
\]
The associated $M/M/\infty$ network is simple in this case. It consists of three queues in series. The invariant distribution is the product of the three Poisson distributions of the statement of the theorem.  See Figure~\ref{FigF2} below. 
\begin{center}
\begin{figure}[H]
\resizebox{10cm}{!}{%
\begin{tikzpicture}[->,node distance=10mm]

\node(M) at (-4,0){};
\node[black, very thick,rectangle,draw](Q) at (-2,0){$Q$};
\node[black, very thick, rectangle,draw](V) at (2,0){$V$};
\node[black, very thick, rectangle,draw](R) at (5,0){$R$};
\node(R0) at (8,0){};

\path (M) edge [black,very thick,left,midway,above] node {$\k0q$} (Q);
\path (Q) edge [black,very thick,left,midway,above] node {$\kqu C_U$} (V);
\path (V) edge [black,very thick,left,midway,above] node {$\klr \ell(t)$} (R);
\path (R) edge [black,very thick,left,midway,above] node {$\kri(C_M{-}1)$} (R0);
\end{tikzpicture}}
\caption{Under-loaded Case:$M/M/\infty$ network for fast processes}\label{FigF2}
\end{figure}
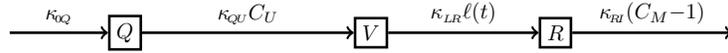
\end{center}
Again, the convergence result is obtained on a time interval $[0,t_0]$, as before the condition  $\k0q{<}\kil$ gives that the solution of the ODE $(\ell(t))$ is converging to its positive fixed point, the same procedure can be used starting at time $t_0$ on the time interval $[t_0,2t_0]$, and by induction on the half line. The theorem is proved. 
\end{proof}
The result for the production of $P$-particles is given by the following corollary.
\begin{corollary}\label{Corol-Sub}
  Under the assumptions of Theorem~\ref{Sub-0}, then for the convergence in distribution
  \[
  \lim_{N\to+\infty}\left(\frac{P_N(t)}{N}\right)=(\k0q t)
  \]
 where $(P_N(t))$ is the process defined by Relation~\eqref{Prott}
\end{corollary}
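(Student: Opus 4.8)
The plan is to mimic the two-step argument of the proof of Corollary~\ref{Corol-Stab}: compensate the production counting process, then average the resulting integral against the limiting occupation measure. The essential new feature is that the integrand of $(P_N(t))$ now pairs the \emph{slow} component $(L_N(t)/N)$ with a \emph{fast} component, whereas in the stable case both factors were fast.

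First I would write, with $V_N(t){=}U^0_N{-}U_N(t)$ (the $V$-coordinate of Theorem~\ref{Sub-0}),
\[
P_N(t)=\klr\int_0^t L_N(s)V_N(s)\diff s+M_N(t),
\]
where $(M_N(t))$ is the compensated martingale associated with Relation~\eqref{Prott}, with previsible increasing process $(\klr\int_0^t L_N(s)V_N(s)\diff s)$. The increasing process of $(M_N(t)/N)$ is then $N^{-1}(\klr\int_0^t (L_N(s)/N)V_N(s)\diff s)$, which is $O(1/N)$ once the integral is shown to be bounded (next step); Doob's inequality then gives that $(M_N(t)/N)$ converges in distribution to $(0)$ on any compact interval.

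It remains to identify the limit of the scaled compensator $\klr\int_0^t (L_N(s)/N)V_N(s)\diff s$. I would first replace $L_N(s)/N$ by its continuous deterministic limit $\ell(s)$: Theorem~\ref{Sub-0} gives $\sup_{s\le t}|L_N(s)/N{-}\ell(s)|{\to}0$ in probability and, as in Proposition~\ref{Prop0}, the occupation-measure convergence extends to the linear coordinate $v$, so that $\int_0^t V_N(s)\diff s=\croc{\Lambda_N,\mathbbm{1}_{[0,t]}{\otimes}v}$ stays bounded in probability; hence $\klr\int_0^t(L_N(s)/N{-}\ell(s))V_N(s)\diff s$ vanishes. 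For the remaining term $\klr\croc{\Lambda_N,\mathbbm{1}_{[0,t]}\ell{\otimes}v}$ I would invoke the convergence $(\Lambda_N){\to}(\Lambda_\infty)$ for test functions of the form (continuous in $s$)$\times$(linear in $v$), sandwiching the indicator $\mathbbm{1}_{[0,t]}$ between continuous functions, which is harmless since $\Lambda_\infty(\diff s,\N^3)$ is the Lebesgue measure and carries no atom at $t$. As the $V$-marginal of $\Lambda_\infty$ at time $s$ is $\Pois{\k0q/(\klr\ell(s))}$, with mean $\k0q/(\klr\ell(s))$, the limit is $\klr\int_0^t\ell(s)[\k0q/(\klr\ell(s))]\diff s=\k0q t$, the factor $\ell(s)$ cancelling. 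Combining the two steps yields $(P_N(t)/N){\to}(\k0q t)$.

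The only genuinely delicate point is the passage to the limit in the slow--fast product $\int_0^t(L_N(s)/N)V_N(s)\diff s$: the occupation measure of the fast coordinates alone does not suffice, and one needs the joint convergence of $((L_N(t)/N),\Lambda_N)$ supplied by Theorem~\ref{Sub-0} together with the uniform control of the slow component. I note that this very quantity, with the same limit $\k0q t$, already occurs as the $\klr$-consumption term in the SDE for $(L_N(t)/N)$ used in Theorem~\ref{Sub-0} to derive the limiting ODE $\dot\ell(t){=}\kil(1{-}\ell(t)){-}\k0q$; one may therefore simply quote that computation rather than reproduce it.
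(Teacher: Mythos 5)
Your proposal is correct and follows essentially the same route as the paper: compensate the counting process~\eqref{Prott}, kill the martingale via Doob's inequality, and evaluate the compensator $\klr\int_0^t(L_N(s)/N)V_N(s)\diff s$ by combining the uniform convergence of the slow component $(L_N(t)/N)$ to $(\ell(t))$ with the occupation-measure limit of the $V$-coordinate, whose mean $\k0q/(\klr\ell(s))$ cancels the factor $\ell(s)$. The paper's proof is just a terser version of this same computation, deferring the technical justifications to the proof of Corollary~\ref{Corol-Stab}.
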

\begin{proof}
We proceed in the same way as in the proof of Corollary~\ref{Corol-Stab}, by noting that the relations
  \begin{multline*}
  \lim_{N\to+\infty}\left(\frac{P_N(t)}{N}\right)=  \lim_{N\to+\infty}\left( \klr\int_0^t\overline{L}_N(s)\left(U^0_N{-}U_N(s)\right)\diff s\right)\\
  =  \left( \klr\int_0^t\int_{\N} \ell(s) v \Pois{\frac{\k0q }{\klr \ell(s)}} (\diff v)\diff s\right)=(\k0q t)
  \end{multline*}
  hold. 
\end{proof}
\section{A Model with Regulation}\label{Reg-1-Sec}
We return to the model of Section~\ref{Model-Sec}, the process
\[
(X_N(t)){=}(S_N(t),R_N(t),L_N(t),Q_N(t),U_N(t))
\]
is defined by the SDEs of Section~\ref{SDE-Sec}. 

\subsection{Stable Network}\label{R1sec}\
\addcontentsline{toc}{section}{\ref{R1sec}. Stability Regime}

In this case the flow of resources, $Q$-particles,  is sufficiently important. As it can be expected, the regulation  mechanism does not have a significant impact. The following result is essentially identical to Theorem~\ref{Stab-0}. The proof is skipped for this reason.

Intuitively, starting from a convenient subset of initial states, a free $R$-particle is paired to an $M$-particle at a rate at least $M_M{-}N{\sim}(C_M{-}1)N$,  the sequestration rate  being $\krs$, sequestration is unlikely, given that there is a finite number $R$-particles.  The process $(S_N(t))$ is not in fact a fast process,   in the limit for the occupation measure, it is a Dirac measure at $0$. 
\begin{theorem}[Stability]\label{Stab-1}
Under the scaling assumptions~\eqref{Scal},  if $\k0q{>}\kil $, and the initial state is $x_N(0)=(s,r,\ell,q_N,u){\in}{\cal S}_N$, such that
  \[
  \lim_{N\to+\infty} \frac{q_N}{N}=q_0{>}0,
  \]
then, for the convergence in distribution,
  \[
  \lim_{N\to+\infty} \left(\frac{Q_N(t)}{N}\right)=(q(t)),
  \]
  where $(q(t))$ is the solution of the ODE,
  \[
  \dot{q}(t){=}\k0q {-}\kil {-}\kq0 q(t),
  \]
starting at $q_0$ and the occupation measure of $(S_N(t),R_N(t),L_N(t),U_N(t))$ is converging in distribution to $\mu_\infty$ defined by, for  $F{\in}{\cal C}_c(\R_+{\times}\N^4)$,
\[
\croc{\mu_\infty,F}=\croc{\Lambda_\infty,\widetilde{F}},
\]
where $\widetilde{F}(t,(x)){=}F(t,(0,x))$, for $t{\ge}0$ and $x{\in}\N^3$, and $\Lambda_\infty$ is the measure on $\R_+{\times}\N^3$ defined by Relation~\eqref{eqr0}.
\end{theorem}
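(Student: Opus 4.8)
The plan is to reproduce, almost verbatim, the three-step scheme used for Theorem~\ref{Stab-0}, the only genuinely new ingredient being the sequestration coordinate $(S_N(t))$. Fix a horizon $T$ and work first on a short interval $[0,t_0]$. As in Step~1 there, I introduce a stopping time $\tau_N$ at which either the fast coordinate $(U_N(t)/N)$ becomes too large or the slow coordinate $(Q_N(t)/N)$ becomes too small, and on $[0,\tau_N]$ I dominate/minorize $(X_N(t))$ by a decoupled auxiliary process obtained by freezing the prefactors $M^0_N{-}(N{-}R_N{-}S_N)$ and $U^0_N{-}U_N$ at their extreme $O(N)$ values; the only addition relative to Theorem~\ref{Stab-0} is a fifth auxiliary coordinate feeding $\widetilde S_N$ at a rate of order $\krs\widetilde R_N\widetilde U_N$ and draining it at rate $\ksr\widetilde S_N$. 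Analysing this auxiliary process through Proposition~\ref{Prop0}, exactly as in Step~2, yields a time $t_0{>}0$ with $\P(\tau_N{\le}t_0){\to}0$ together with the $\sqrt N$-boundedness of the fast coordinates on $[0,t_0]$.

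The next point is that the presence of $(S_N(t))$ does not alter the fast dynamics at leading order. Since $R_N$ and $U_N$ are $O(1)$ fast variables, the sequestration input rate $\krs R_N(t)U_N(t)$ is itself $O(1)$, so $(S_N(t))$ is a non-fast coordinate: coupling it from above with a queue of bounded input and service rate $\ksr$ and invoking Proposition~\ref{MMIHit} shows $(\sup_{t\le t_0}S_N(t))$ is bounded in distribution, i.e. $S_N{=}O(1)$. Consequently $N{-}R_N{-}S_N{-}L_N{\sim}N$ and $M^0_N{-}(N{-}R_N{-}S_N){\sim}(C_M{-}1)N$ remain valid under~\eqref{Scal}, so the generator of the three-dimensional fast process $(R_N,L_N,U_N)$ converges to the very same limit as in the unregulated model. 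The occupation-measure argument of Step~3, using Propositions~\ref{Prop0}, \ref{Prop1} and the product-form invariant measure of Proposition~\ref{FastInv}, then produces the same limiting measure $\Lambda_\infty$ of Relation~\eqref{eqr0} for $(R_N,L_N,U_N)$, and the same integral identity, hence the same ODE, for $(Q_N(t)/N)$.

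The delicate new step, which I expect to be the main obstacle, is to identify the $S$-marginal of the limiting occupation measure, i.e. to justify the representation $\croc{\mu_\infty,F}{=}\croc{\Lambda_\infty,\widetilde F}$ with $\widetilde F(t,x){=}F(t,(0,x))$. The mechanism to exploit is that, on the timescale on which $(R_N,L_N,U_N)$ equilibrates, a free $R$-particle is recaptured into initiation at rate of order $\kri(C_M{-}1)N$ while it can be sequestered only at rate $\krs U_N{=}O(1)$; sequestration is thus a lower-order perturbation of the fast loop. The point to establish carefully is that this $O(1)$ input does not accumulate into mass of the limiting occupation measure away from $\{S{=}0\}$ — precisely the same fast/slow separation as in Theorem~\ref{Stab-0}, but now between an $O(N)$ rate and an $O(1)$ rate, which is why it must be verified uniformly on $[0,t_0]$ and not merely through the stationary balance. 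Once $[0,t_0]$ is settled, the condition $\k0q{>}\kil$ makes the fixed point of the $Q$-ODE attracting, so the whole argument reproduces on $[t_0,2t_0]$ and, by induction, on all of $[0,T]$.
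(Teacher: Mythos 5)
Your overall scheme --- stopping time, domination by a decoupled auxiliary process, occupation--measure convergence for the fast triple $(R_N,L_N,U_N)$, then induction over the intervals $[kt_0,(k{+}1)t_0]$ --- is exactly the route the paper intends: the authors skip this proof, stating that it is ``essentially identical to Theorem~\ref{Stab-0}'' and offering for the sequestration coordinate the same heuristic you give. Your adaptation of Steps~1 and~2 is sound, up to one repair: the input rate $\krs R_N(t)U_N(t)$ to $(S_N(t))$ is not uniformly bounded on the good event (only $R_N,U_N\le\sqrt N$ is available there), so the bound on $\sup_{t\le t_0}S_N(t)$ should come from the tightness of the integrated flux $\int_0^{t_0}R_N(s)U_N(s)\,\diff s$ rather than from Proposition~\ref{MMIHit}. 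That still yields $S_N{=}O(1)$ and hence $M^0_N{-}(N{-}R_N{-}S_N)\sim(C_M{-}1)N$, so the identification of the limiting fast dynamics and of the $Q$-ODE goes through as you describe.

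The genuine gap is the step you flag and defer, and the mechanism you propose cannot close it. The comparison ``recapture at rate $\kri(C_M{-}1)N$ versus sequestration at rate $\krs U_N{=}O(1)$'' only shows that an individual free $R$-particle is sequestered with probability $O(1/N)$ during one free sojourn; but the fast loop generates $O(N)$ such sojourns per unit time, so the aggregate flux into $S$ is $\krs\int_0^tR_N(s)U_N(s)\,\diff s$, which by the averaging of the fast triple has liminf at least $\krs\int_0^t\P_{\pi_s}(r{\ge}1,u{\ge}1)\,\diff s>0$ (the $R$- and $U$-marginals in Proposition~\ref{FastInv} share the common Poisson component $X$, so they are simultaneously positive with positive probability). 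Each sequestered particle is then released only at the $O(1)$ rate $\ksr$. Writing the balance from the SDE for $(S_N(t))$,
\begin{equation*}
\ksr\,\E\left(\int_0^tS_N(s)\,\diff s\right)= s+\krs\,\E\left(\int_0^tR_N(s)U_N(s)\,\diff s\right)-\E\left(S_N(t)\right),
\end{equation*}
one sees that $(S_N(t))$ is asymptotically a birth-and-death process with non-vanishing birth rate $\krs\croc{\pi_t,ru}$ and death rate $\ksr s$, whose occupation measure charges $\{s\ge1\}$; it cannot collapse onto $\{s{=}0\}$ as the representation $\croc{\mu_\infty,F}{=}\croc{\Lambda_\infty,\widetilde F}$ requires. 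So the deferred step is not a routine verification ``uniformly on $[0,t_0]$'': as stated it would fail, and completing the argument requires either showing $\croc{\pi_t,ru}{=}0$ (which Proposition~\ref{FastInv} rules out) or replacing the claimed $S$-marginal by the law of this limiting birth-and-death process. Since the paper gives no proof of Theorem~\ref{Stab-1} beyond the same heuristic, this is a point to raise rather than to paper over.
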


For the production of $P$-particles, it is easily seen that Corollary~\ref{Corol-Stab} also holds. The sequestered $R$-particles do not play any role
  in this case. 
\subsection{Under-Loaded Network: Optimal Sequestration}\label{R2sec}
\addcontentsline{toc}{section}{\ref{R2sec}. Under-Loaded Regime: Optimal Sequestration}

In this section the efficiency of the regulation mechanism is shown in the following theorem. When the input rate of $Q$-particles is too low, $\k0q{<}\kil $, if an additional condition is satisfied, Relation~\eqref{Cond3}, the  process $(L_N(t))$ is still $O(1)$. Only the processes $(S_N(t))$ and $(U_N(t))$ are of the order of $N$. This is in contrast with Theorem~\ref{Sub-0} under the same condition when there is no regulation.

This case stands out also for another reason. For this regime even if the coordinates of the initial state are all of the correct asymptotic orders of magnitude, it may happen that the Markov process may leave this part of the state space after some finite time, one of the $O(1)$ coordinates become of the order of $N$. Heuristically, this happens for example when there are too many sequestered $R$-particles initially. The remaining $R$-particles cannot cope with the (small) incoming flow of $Q$-particles, so the process $(Q_N(t))$ will be of the order of $N$ on a time interval before returning to the correct order of magnitude $O(1)$.

In the five asymptotic results of this paper, this is the only case when this phenomenon happens. The introduction of the neighborhood $V_0$ for the initial state is a way of preserving the orders of magnitudes on the whole time axis. This is related to the stability properties of a dynamical system in $\R^2$. 

\begin{theorem}\label{Stab-1-Max}
Let 
\begin{equation}\label{Fix3}
  \begin{cases}
s_\infty&=\displaystyle 1{-}\frac{\k0q}{\kil}, \\
u_\infty &=\displaystyle \frac{\ksr}{\krs}\frac{\kri}{\k0q}\left(C_M{-}\frac{\k0q}{\kil}\right)\left(1{-}\frac{\k0q}{\kil}\right),
  \end{cases}
\end{equation}
and assume that the initial state of $(X_N(t))$  is  $x_N(0){=}(s_N,r,\ell,q,u_N){\in}\N^5$, with 
  \[
  \lim_{N\to+\infty} \left(\frac{s_N}{N},\frac{u_N}{N}\right)=(s_0,u_0),
  \]
  and   $\Lambda_N$ is the  occupation measures of $(R_N(t),L_N(t),Q_N(t))$.

Under  the scaling assumptions~\eqref{Scal} and  the conditions
  \begin{equation}\label{Cond3}
\k0q{<}\kil \text{ and }\frac{\ksr}{\krs }\frac{\kri }{\k0q}\left(C_M{-}\frac{\k0q}{\kil}\right)\left(1{-}\frac{\k0q}{\kil}\right) < C_U,
  \end{equation}
 there exists a neighborhood of $V_0$ of $(s_\infty,u_\infty)$ in $(0,1){\times}(0,C_U)$ such that if  $(s_0,u_0){\in}V_0$, then the relation
\[
  \lim_{N\to+\infty} \left(\left(\frac{S_N(t)}{N},\frac{U_N(t)}{N}\right),\Lambda_N\right)=((s(t),u(t)),\Lambda_\infty),
\]
holds  for the  convergence in distribution, where $(s(t), u(t))$ is the solution of the ODE,
 \begin{equation}\label{ODE3}
   \begin{cases}  
\dot{s}(t)&{=}\displaystyle \krs u(t)\frac{\kil(1{-}s(t)){+}\ksr s(t)}{\kri (C_M{-}(1{-}s(t))){+}\krs u(t)}-\ksr s(t),\\
\dot{u}(t)&{=}\displaystyle\kil(1{-}s(t)){-}\k0q,
\end{cases}
 \end{equation}
starting at $(s_0,u_0)$, and $\Lambda_\infty{\in}{\cal M}(\R_+{\times}\N^3)$ is the measure defined by
\begin{multline}\label{FPeq3}
\croc{\Lambda_\infty,F}=\int_0^{+\infty} \int_{\N^3} F(t,r,l,q)
\Pois{\frac{\kil(1{-}s(t)){+}\ksr s(t)}{\kri (C_M{-}(1{-}s(t))){+}\krs u(t)}} (\diff r)\\
\Pois{ \frac{\kil(1{-}s(t))}{\klr (C_U{-}u(t))}} (\diff \ell)
\Pois{\frac{\k0q}{\kqu u(t)}} (\diff q)\,\diff t,
\end{multline}
for any function $F{\in}{\cal C}_c(\R_+{\times}\N^3)$.

The point $(s_\infty,u_\infty)$ is a locally stable point of the dynamical system $(s(t),u(t))$. 
\end{theorem}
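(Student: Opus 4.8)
The plan is to reproduce the four-step scheme of the proof of Theorem~\ref{Stab-0}, now with $(S_N/N, U_N/N)$ as the slow variables and $(R_N, L_N, Q_N)$ as the fast ones. Fixing $T{>}0$, I would first introduce a stopping time $\tau_N$ at which either $U_N/N$ leaves a neighborhood of $u_0$ compactly contained in $(0, C_U)$, or $S_N/N$ leaves a neighborhood of $s_0$ in $(0,1)$, or one of $R_N, L_N, Q_N$ exceeds $\sqrt N$. On $[0, \tau_N]$ the quantities $C_U{-}U_N/N$, $U_N/N$ and $1{-}S_N/N$ stay bounded away from $0$, so the fast coordinates have transition rates genuinely of order $N$ with denominators bounded below; a decoupled auxiliary process $(\widetilde X_N)$ with frozen rates can then be coupled pathwise to the true process, exactly as in Step~1 of Theorem~\ref{Stab-0}, with the comparison inequalities established by induction over jump times.

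Second, I would identify the fast process obtained by freezing the slow variables at $(s,u)$. Reading the rates off~\eqref{QMat}, the three fast coordinates form a Jackson network of infinite-server queues: node $L$ has external input $\kil(1{-}s)$ and per-customer service $\klr(C_U{-}u)$ and routes its departures to node $R$, which additionally receives the external desequestration input $\ksr s$ and has total per-customer service $\kri(C_M{-}(1{-}s)){+}\krs u$; node $Q$ is an independent $M/M/\infty$ queue with input $\k0q$ and service $\kqu u$ (the degradation term $\kq0 Q_N$ being of lower order in $N$). By quasi-reversibility of infinite-server queues the stationary law is the product of the three Poisson distributions of~\eqref{FPeq3}, with parameters given by the traffic equations $\rho_L{=}\kil(1{-}s)/(\klr(C_U{-}u))$, $\rho_R{=}(\kil(1{-}s){+}\ksr s)/(\kri(C_M{-}(1{-}s)){+}\krs u)$ and $\rho_Q{=}\k0q/(\kqu u)$. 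This is the product-form situation, so in contrast with Theorem~\ref{Stab-0} Proposition~\ref{FastInv} is not needed here. Propositions~\ref{MMIHit}, \ref{Prop0} and~\ref{Prop1} then yield the sample-path bounds on the fast coordinates together with the tightness and convergence of their occupation measures, whence $\tau_N{\ge}t_0$ with high probability for some $t_0{>}0$; averaging the SDEs for $S_N/N$ and $U_N/N$ against the limiting measure, the Poisson means $\rho_R$, $\rho_L$, $\rho_Q$ produce the drifts of~\eqref{ODE3} on $[0,t_0]$.

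The hard part is the extension of the convergence from $[0,t_0]$ to all of $\R_+$, and this is exactly where the present regime is singular: for an arbitrary starting point in the correct region the solution of~\eqref{ODE3} may drive $u(t)$ up to $C_U$, or $s(t)$ out of $(0,1)$, at which point a denominator in~\eqref{FPeq3} degenerates and the order-of-magnitude structure underpinning the whole argument collapses. The remedy is to take $V_0$ to be a neighborhood of the fixed point $(s_\infty,u_\infty)$ on which every trajectory of~\eqref{ODE3} remains in a compact subset of $(0,1){\times}(0,C_U)$ and converges to $(s_\infty,u_\infty)$; the local stability proved below, together with the fact that Condition~\eqref{Cond3} places $(s_\infty,u_\infty)$ strictly inside $(0,1){\times}(0,C_U)$, guarantees that such a $V_0$ exists. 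On $V_0$ the endpoint of each interval again lies in $V_0$, so the bootstrap of Theorem~\ref{Stab-0} iterates over $[t_0,2t_0], [2t_0,3t_0],\dots$ and yields convergence on $[0,T]$ for every $T$.

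Finally, for the stability statement I would first check that $(s_\infty,u_\infty)$ is stationary: $\dot u{=}0$ forces $s_\infty{=}1{-}\k0q/\kil$, and $\dot s{=}0$ then gives the value of $u_\infty$ in~\eqref{Fix3}. Writing $(f,g){=}(\dot s,\dot u)$, $A{=}\kil(1{-}s){+}\ksr s$ and $B{=}\kri(C_M{-}(1{-}s)){+}\krs u$, one has $g_s{=}{-}\kil$ and $g_u{=}0$, so the Jacobian is $\left(\begin{smallmatrix} f_s & f_u\\ -\kil & 0\end{smallmatrix}\right)$ with trace $f_s$ and determinant $\kil f_u$. A short differentiation gives $f_u{=}\krs A\,\kri(C_M{-}(1{-}s))/B^2{>}0$, so the determinant is positive; and using the stationarity relation $\krs u_\infty A{=}\ksr s_\infty B$ one reduces $f_s$ to $\tfrac1B\bigl(\krs u_\infty(\ksr{-}\kil){-}\ksr s_\infty\kri\bigr){-}\ksr$, whose negativity is equivalent, after multiplying by $B{>}0$ and cancelling the common $\ksr\krs u_\infty$ term, to $-\krs u_\infty\kil-\ksr s_\infty\kri<\ksr\kri(C_M{-}(1{-}s_\infty))$, which holds since the left side is negative and the right side positive. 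A negative trace together with a positive determinant forces both eigenvalues to have negative real part, giving the asserted local asymptotic stability of $(s_\infty,u_\infty)$.
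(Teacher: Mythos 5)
Your proposal is correct and follows essentially the same route as the paper: the same stopping-time/coupling scheme borrowed from Theorem~\ref{Stab-0}, the same identification of the frozen fast process as a feed-forward network of $M/M/\infty$ queues whose product-form invariant law gives the Poisson parameters in~\eqref{FPeq3} (the paper likewise notes that Proposition~\ref{FastInv} is not needed here), and the same use of local asymptotic stability of $(s_\infty,u_\infty)$ to choose $V_0$ and iterate over $[t_0,2t_0],\dots$. The only cosmetic difference is that you establish stability via the sign of the trace and determinant of the $2{\times}2$ Jacobian (your computations of $f_u{>}0$ and $f_s{<}0$ check out) whereas the paper argues through the positivity of the coefficients of the characteristic polynomial; for a two-dimensional system these are equivalent.
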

\begin{proof}
For the beginning of the proof,  we proceed as in the proof of Theorem~\ref{Sub-0} and by following the same method used to prove Theorem~\ref{Stab-0}: First define a convenient stopping time, then introduce an auxiliary process $(\widetilde{X}_N(t))$ and a coupling with the process $(X_N(t))$. Tightness/convergence properties for $(\widetilde{X}_N(t))$ are then used to prove tightness and convergence results for $(X_N(t))$. 
  
We define
  \[
\tau_N=\inf \left\{t{>}0: \min\left(\frac{S_N(t)}{N},\frac{U_N(t)}{N},\frac{U^0_N{-}U_N(t)}{N}\right)\le \eta \right\}
\]
and $(\widetilde{S}_N(t),\widetilde{U}_N(t),\widetilde{R}_N(t),\widetilde{L}_N(t),\widetilde{Q}_N(t))$ the solution of the SDE
\begin{equation}
\begin{cases}
\diff \widetilde{S}_N(t)&= {-}{\cal P}_{\ksr}\left(\left(\rule{0mm}{4mm}0,\widetilde{S}_N(t{-})\right),\diff t\right),\\
\diff \widetilde{U}_N(t)&={-}{\cal P}_{\kqu}\left(\left(\rule{0mm}{4mm}0,\widetilde{Q}_N(t{-})\widetilde{U}_N(t{-})\right),\diff t\right),\\
\diff \widetilde{R}_N(t)&={\cal P}_{\klr}\left(\left(\rule{0mm}{4mm}0,U^0_N \widetilde{L}_N(t{-})\right),\diff t\right) 
{+}{\cal P}_{\ksr}\left(\left(\rule{0mm}{4mm}0,N\right),\diff t\right)\\
&\hspace{2cm}{-}{\cal P}_{\kri}\left(\left(\rule{0mm}{4mm}0,\left(M^0_N{-}N\right)\widetilde{R}_N(t{-})\right),\diff t\right),\\
\diff \widetilde{L}_N(t)&={\cal P}_{\kil}\left(\left(\rule{0mm}{4mm}0,N\right),\diff t\right) 
{-}{\cal P}_{\klr}\left(\left(\rule{0mm}{4mm}0,\eta N\widetilde{L}_N(t{-})\right),\diff t\right),\\
  \diff \widetilde{Q}_N(t)&={\cal P}_{\k0q}\left(\left(\rule{0mm}{4mm}0,N\right),\diff t\right) 
{-}{\cal P}_{\kqu}\left(\left(\rule{0mm}{4mm}0,\eta N \widetilde{Q}_N(t{-})\right),\diff t\right),
\end{cases}
\end{equation}
with initial point $(s_N,u_N,r,l,q){\in}{\cal S}_N$. 
\begin{multline*}\label{eqcoup3}
 S_N(t)\ge \widetilde{S}_N(t), \quad U_N(t) \ge \widetilde{U}_N(t), \\
  R_N(t)\le \widetilde{R}_N(t),\quad L_N(t)\le \widetilde{L}_N(t),   \quad Q_N(t) \le \widetilde{Q}_N(t),
\end{multline*}
hold for all $t{\le}\tau_N$.

From there, by proceeding as before,  it is not difficult to prove the convergence in distribution of the occupation measure of the sequence of occupation measures of $(\widetilde{R}_N(t),\widetilde{L}_N(t),\widetilde{Q}_N(t))$ and of the scaled processes $(\widetilde{S}_N(t)/N,\widetilde{L}_N(t)/N)$. This convergence gives the existence of $t_0$ such that
\[
\lim_{N\to+\infty}\P(\tau_N\ge t_0)=1,
\]
and, consequently, the tightness property of $(\Lambda_N,(\overline{X}_N(t)))$ on the time interval $[0,t_0]$.

For $x{=}(r,\ell,q){\in}\N^2$, the $Q$-matrix associated to the fast process $(R_N(t),L_N(t),Q_N(t))$  at time $t$ is
\[
x{\longrightarrow} x{+}
\begin{cases}
\e{R}-\e{L}, &\klr (C_U{-}u(t))\ell,\\
\e{R}, & \ksr s(t),\\
{-}\e{R}, & (\krs u(t) {+} \kri(C_M{-}1{+}s(t)))r,
\end{cases}
\quad
x{\longrightarrow} x{+}
\begin{cases}
\e{L},& \kil (1{-}s(t)),\\
\e{Q},& \k0q,\\
{-}\e{Q},& \kqu u(t)q.
\end{cases}
\]
The associated $M/M/\infty$ network is depicted in Figure~\ref{FigF3} below. It is easily checked that the product of the three Poisson distribution in the right-hand side of Relation~\eqref{FPeq3} is indeed the invariant distribution of this Markov process.  In the same way as  in the proof Theorem~\ref{Sub-0} by using tightness and continuity arguments, {\em on the time interval} $[0,t_0]$ we obtain the desired convergence of $(\Lambda_N,(S_N(t)/N,U_N(t)/N))$. 

We now investigate the dynamical system defined by the ODE~\eqref{ODE3}. It is clear that $w_{\infty}{=}(s_\infty,u_\infty)$ is its unique fixed point, and it is in the domain $(0,1){\times}(0,C_U)$ by Condition~\eqref{Cond3}. The Jacobian matrix of $(s(t),u(t))$ at $w_\infty$ is 
\[
\begin{bmatrix}
  -\frac{1}{D_0}\ksr\kil((C_M{-}1)\kil{}^2 {-} \k0q{}^2 {+} \kil{}^2) &\frac{1}{D_0}\krs\k0q{}^2\kil{}^2\\ -\kil & 0,
\end{bmatrix}
\]
with $D_0{=}\k0q\kil {-} \k0q\ksr + \kil\ksr)((C_M{-}1)\kil {-} \k0q {+} \kil$, and its characteristic polynomial is $P_3$ with 
\begin{multline*}
P_3(x)\steq{def}\kri\left(\k0q\kil {+}\ksr(\kil{-}\k0q)\right)((C_M{-}1)\kil{+}\kil{-}\k0q)x^2 \\+ \left((C_M{-}1)\kil^3\kri\ksr{+}\kil\kri\ksr(\kil{}^2{-}\k0q{}^2)\right)x{+}\kil{}^3\krs\k0q{}^2.
\end{multline*}
The condition $\kil{>}\k0q$ implies that all coefficients of $P_3$ are positive. If the zeroes of $P_3$ are real, then they are negative. Otherwise, their real part is negative. This implies that  $w_\infty$ is an asymptotically stable point of $(s(t),u(t))$ and, therefore, that there exists a neighborhood $V_0$ of $w_\infty$ such that if $(u(0),s(0)){\in}V_0$ then $(u(t),s(t)){\in}V_0$  for all $t{\ge}0$. See~\citet{Hirsch}. From there, we can start from $(u(t_0),s(t_0)){\in}V_0$ and by checking carefully the dependence of $t_0$ with respect to $(u(0),s(0))$,  repeat the same operation on the time interval $[t_0,2t_0]$ and concludes the proof of our theorem by induction. 
\end{proof}
\begin{center}
\begin{figure}[H]
\resizebox{10cm}{!}{%
\begin{tikzpicture}[->,node distance=10mm]

\node(M) at (-3,2){};
\node[black, very thick,rectangle,draw](Q) at (0,2){$Q$};
\node(N) at (4,2){};
\node(L0) at (-3,0){};
\node[black, very thick, rectangle,draw](L) at (0,0){$L$};
\node[black, very thick, rectangle,draw](R) at (4,0){$R$};
\node(R0) at (9,0){};

\path (M) edge [black,very thick,left,midway,above] node {$\k0q$} (Q);
\path (Q) edge [black,very thick,left,midway,above] node {$\kqu u(t)$} (N);
\path (L) edge [black,very thick,left,midway,above] node {$\klr (C_U{-}u(t))$} (R);
\path (L0) edge [black,very thick,left,midway,above] node {$\kil (1{-}s(t))$} (L);
\path (R) edge [black,very thick,left,midway,above] node {$\krs u(t){+} \kri(C_M{-}1{+}s(t))$} (R0);
\end{tikzpicture}}
\caption{Under-loaded Regime --- Optimal Sequestration.\\The $M/M/\infty$ network for fast processes}\label{FigF3}
\end{figure}
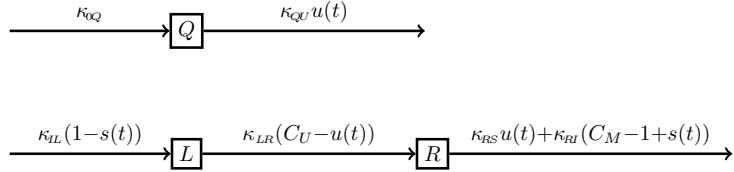
\end{center}

\begin{corollary}[Production of $P$-particles]
Under the assumptions of Theorem~\ref{Stab-1-Max}, then for the convergence in distribution
  \[
  \lim_{N\to+\infty}\left(\frac{P_N(t)}{N}\right)= \left( \kil\int_0^t(1{-}s(w))\diff w\right)
  \]
 where $(P_N(t))$, resp $(s(w))$,  is the process defined by Relation~\eqref{Prott}, resp. Relation~\eqref{ODE3}. 
\end{corollary}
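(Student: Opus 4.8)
The plan is to follow the martingale method already used for Corollaries~\ref{Corol-Stab} and~\ref{Corol-Sub}. First I would decompose the production process of Relation~\eqref{Prott} into its compensator plus a martingale. Since $(P_N(t))$ counts the points of the Poisson process ${\cal P}_{\klr}$ with predictable intensity $\klr L_N(s{-})(U^0_N{-}U_N(s{-}))$, the process
\[
M_N(t)\steq{def}\frac{P_N(t)}{N}-\klr\int_0^t L_N(s)\left(\frac{U^0_N}{N}-\frac{U_N(s)}{N}\right)\diff s
\]
is a local martingale, and it suffices to show that $(M_N(t))$ vanishes in distribution and that the scaled compensator converges to the announced limit.

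For the martingale, since $(P_N(t))$ is a simple counting process, the predictable quadratic variation of $(NM_N(t))$ equals its compensator $\klr\int_0^t L_N(s)(U^0_N{-}U_N(s))\diff s$. As $(L_N(t))$ is a fast process and $U^0_N{-}U_N(s){\le}U^0_N{=}O(N)$, I would bound $\E(\langle NM_N\rangle(t)){\le}\klr U^0_N\,\E(\int_0^t L_N(s)\diff s)$; dominating $(L_N(t))$ by an $M/M/\infty$ queue as in Proposition~\ref{Prop0} and controlling its first moment via Proposition~\ref{MMIHit} gives $\E(\int_0^t L_N(s)\diff s){=}O(1)$, hence $\E(\langle NM_N\rangle(t)){=}O(N)$. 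Doob's inequality then yields $\E(\sup_{s\le t}M_N(s)^2){=}O(1/N)$, so $(M_N(t))$ converges in distribution to $(0)$.

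It remains to identify the limit of the scaled compensator using the joint convergence of $(\Lambda_N,(U_N(\cdot)/N))$ to $(\Lambda_\infty,(u(\cdot)))$ from Theorem~\ref{Stab-1-Max}. Because the limit $(u(t))$ is deterministic and continuous, $\sup_{w\le t}|U_N(w)/N{-}u(w)|$ converges to $0$ in probability; together with $U^0_N/N{\to}C_U$ this lets me replace $(U^0_N{-}U_N(w))/N$ by $(C_U{-}u(w))$ up to an error bounded by $(\eps{+}o(1))\int_0^t L_N(w)\diff w{=}O(\eps)$. The remaining term is $\klr\croc{\Lambda_N,g}$ with $g(w,r,\ell,q){=}\ell(C_U{-}u(w))\mathbbm{1}_{[0,t]}(w)$, and using the explicit $L$-marginal of $\Lambda_\infty$ in Relation~\eqref{FPeq3}, whose mean is exactly $\kil(1{-}s(w))/(\klr(C_U{-}u(w)))$, I obtain
\[
\klr\int_0^t(C_U{-}u(w))\int_\N \ell\,\Pois{\frac{\kil(1{-}s(w))}{\klr(C_U{-}u(w))}}(\diff\ell)\,\diff w=\kil\int_0^t(1{-}s(w))\diff w,
\]
with $(s(w))$ the solution of Relation~\eqref{ODE3}, which is the claimed limit.

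The main obstacle is that the test function $\ell\mapsto\ell$ is unbounded, so the occupation-measure convergence stated in Theorem~\ref{Stab-1-Max} for $F{\in}{\cal C}_c$ must be upgraded to functions of linear growth, both in the identification of $\croc{\Lambda_N,g}$ and implicitly in the moment estimate of Step~2. As in the proof of Proposition~\ref{Prop0}, I would obtain this from a uniform first-moment control of $(L_N(t))$ under $\Lambda_N$, again via the $M/M/\infty$ domination, which guarantees the uniform integrability needed to pass $\ell$ through the limit.
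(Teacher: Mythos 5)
Your proposal is correct and follows essentially the same route as the paper: the paper's proof of this corollary also decomposes $(P_N(t)/N)$ into a compensator plus a martingale killed by Doob's inequality, and then identifies the limit of $\klr\int_0^t L_N(w)\bigl(U^0_N/N{-}U_N(w)/N\bigr)\diff w$ through the convergence of $(\Lambda_N,(U_N(\cdot)/N))$ and the mean of the Poisson $L$-marginal in Relation~\eqref{FPeq3}. You merely spell out the moment and uniform-integrability controls that the paper leaves implicit behind ``it is not difficult to prove''.
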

Note that if the initial state $(s_0,u_0)$ is the equilibrium point of the ODE~\eqref{ODE3}, then the limit of $(P_N(t)/N)$ is $(\k0q t)$ as it can be expected. 
\begin{proof}
We proceed in the same way as in the proof of Corollary~\ref{Corol-Stab}, by noting that as a consequence of Theorem~\ref{Stab-1-Max}, the relations 
\begin{multline*}
  \lim_{N\to+\infty}\left( \klr\int_0^tL_N(w)\left(\frac{U^0_N}{N}{-}\frac{U_N(w)}{N}\right)\diff w\right)\\
  =  \left( \klr\int_0^t\int_{\N} \ell (C_U{-}u(w)) \Pois{ \frac{\kil(1{-}s(w))}{\klr (C_U{-}u(w))}} (\diff \ell)\diff w\right)
\\= \left( \kil\int_0^t(1{-}s(w))\diff w\right)
\end{multline*}
  hold. 
\end{proof}
\subsection{Under-Loaded Network: Saturation}\label{R3sec}
\addcontentsline{toc}{section}{\ref{R3sec}. Under-Loaded Regime: Saturation}

When $\k0q{<}\kil$ but the second condition of Relation~\eqref{Cond3} is not true, then the regulation mechanism cannot avoid the undesirable property that the process $(L_N(t))$ is of the order of $N$. We are in the qualitative situation of Theorem~\ref{Sub-0} without regulation. 

\begin{theorem}\label{Stab-1-Sat}
Under  the scaling assumptions~\eqref{Scal} and  the conditions
\begin{equation}\label{Cond4}
\k0q{<}\kil \text{ and }\frac{\ksr}{\krs }\frac{\kri }{\k0q}\left(C_M{-}\frac{\k0q}{\kil}\right)\left(1{-}\frac{\k0q}{\kil}\right) > C_U,
\end{equation}
assume that  the initial state is  $x_N(0){=}(s_N,r,\ell_N,q,U^0_N{-}u)$, with $r$, $q$, $u{\in}\N$ and
\[
\lim_{N\to+\infty} \left(\frac{s_N}{N},\frac{l_N}{N}\right)=(s_0,\ell_0) \text{ with } l_0, s_0{>}0 \text{ and }s_0{+}\ell_0{<}1,
\]
and   $\Lambda_N{\in}{\cal M}(\N^3)$ denotes the occupation measure of $(R_N(t),Q_N(t),U^0_N{-}U_N(t))$.

For the convergence in distribution, the relation
  \[
  \lim_{N\to+\infty} \left(\left(\frac{S_N(t)}{N},\frac{L_N(t)}{N}\right),\Lambda_N\right)=((s(t),\ell(t)),\Lambda_\infty),
  \]
holds, where $(s(t), \ell(t))$ is the solution of the ODE,
\begin{equation}\label{ODE4}
\begin{cases}  
\dot{\ell}(t)&\displaystyle{=}\kil(1{-}\ell(t){-}s(t))-\k0q,\\
\dot{s}(t)&\displaystyle{=}\krs C_U\frac{\k0q{+}\ksr s(t)}{\kri (C_M{-}1{+}s(t)){+}\krs C_U}-\ksr s(t)
\end{cases}
\end{equation}
starting at $(s_0,\ell_0)$, and $\Lambda_\infty{\in}{\cal M}(\R_+{\times}\N^3)$ is the measure defined by
\begin{multline*}
\croc{\Lambda_\infty,F}=\int_0^{+\infty} \int_{\N^3} F(t,r,q,v)
\Pois{\frac{\k0q{+}\ksr s(t)}{\kri \left(C_M{-}1{+}s(t))\right){+}\krs C_U}} (\diff r)\\
\Pois{\frac{\k0q}{\kqu C_U}} (\diff q)
\Pois{\frac{\k0q}{\klr \ell(t)}} (\diff v)\,\diff t,
\end{multline*}
for any function $F{\in}{\cal C}_c(\R_+{\times}\N^3)$.

The equilibrium point  $(s_\infty,\ell_\infty)$ of $(s(t),\ell(t))$,
  \begin{equation}\label{Fix4}
  \begin{cases}
s_\infty&=\displaystyle \frac{1}{2}\left(-(C_M{-}1)+\sqrt{(C_M{-}1)^2+4C_U\frac{\k0q}{\kri}\frac{\krs}{\ksr}}\right)\\ \ &\\
\ell_\infty &=\displaystyle 1{-}\frac{\k0q}{\kil}-s_\infty,
  \end{cases}
\end{equation}
is asymptotically stable.
\end{theorem}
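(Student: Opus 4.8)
The plan is to follow the same template as the proofs of Theorems~\ref{Stab-0}, \ref{Sub-0} and~\ref{Stab-1-Max}: identify the slow and fast coordinates, introduce a stopping time together with a decoupled auxiliary process, transfer tightness by a coupling, average the fast dynamics against its invariant law, and extend the one-interval result to $\R_+$ via the stability of the limiting ODE. Here the slow coordinates, of order $N$, are $(S_N(t),L_N(t))$, while the fast coordinates, $O(1)$, are $(R_N(t),Q_N(t),V_N(t))$ with $V_N(t){=}U^0_N{-}U_N(t)$; in particular $(U_N(t))$ saturates, $U_N(t){\sim}C_UN$, which is the defining feature of this regime. Fixing $T{>}0$, I would introduce, for small $\eta{>}0$ and $\delta{\in}(0,1)$, the stopping time
\[
\tau_N{=}\inf\left\{t{>}0:\min\left(\frac{S_N(t)}{N},\frac{L_N(t)}{N},1{-}\frac{S_N(t){+}L_N(t)}{N}\right){\le}\eta\ \text{ or }\ \frac{V_N(t)}{N^{\delta}}{\ge}1\right\},
\]
so that on $[0,\tau_N]$ the scaled slow variables stay in a compact subset of the feasible simplex $\{s,\ell{>}0,\ s{+}\ell{<}1\}$ and the fast variables are $o(N)$. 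On this interval I would build $(\widetilde X_N(t))$ by decoupling exactly as in Theorem~\ref{Sub-0}, freezing the slow coefficients at their extreme feasible values and driving $(\widetilde S_N(t))$ by the release term alone, and establish the pathwise ordering by induction on jumps. Propositions~\ref{MMIHit}, \ref{Prop0} and~\ref{Prop1} then give the convergence of the occupation measure of the auxiliary fast chain and of $(\widetilde S_N/N,\widetilde L_N/N)$, producing a $t_0{\in}(0,T]$ with $\lim_N\P(\tau_N{\ge}t_0){=}1$ and the tightness of $(\Lambda_N,(\overline X_N(t)))$ on $[0,t_0]$.

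Second, I would identify and average the fast dynamics. Freezing the slow variables at $(s(t),\ell(t))$ and using $U_N{\sim}C_UN$ and $M^0_N{-}(N{-}R_N{-}S_N){\sim}(C_M{-}1{+}s(t))N$, the fast process $(R_N,Q_N,V_N)$ has, on the $O(1)$ timescale, the generator
\[
x{\longrightarrow}x{+}
\begin{cases}
\e{Q},&\k0q,\\
\e{V}{-}\e{Q},&\kqu C_U q,\\
\e{R}{-}\e{V},&\klr \ell(t) v,\\
\e{R},&\ksr s(t),\\
{-}\e{R},&\bigl(\kri(C_M{-}1{+}s(t)){+}\krs C_U\bigr)r.
\end{cases}
\]
This is a feedforward network of three $M/M/\infty$ queues $Q{\to}V{\to}R$ with an extra external Poisson input of rate $\ksr s(t)$ at node $R$; the throughput of the series being $\k0q$, node $R$ receives a Poisson input of rate $\k0q{+}\ksr s(t)$, and one checks directly that the product of the three Poisson distributions in the statement is its invariant law. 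Averaging the drifts of $S_N$ and $L_N$ against this law, namely creation $\krs C_U\cdot\E_\pi(r)$ against destruction $\ksr s(t)$ for $S_N$, and creation $\kil(1{-}s(t){-}\ell(t))$ against destruction $\klr\ell(t)\cdot\E_\pi(v){=}\k0q$ for $L_N$, yields the ODE~\eqref{ODE4} on $[0,t_0]$ by the tightness-and-continuity argument of Theorem~\ref{Sub-0}.

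Extending from $[0,t_0]$ to $\R_+$ is cleaner here than in Theorem~\ref{Stab-1-Max}, and requires no restriction of the initial state to a neighborhood: the second line of~\eqref{ODE4} is autonomous, its vector field being positive below and negative above its unique positive zero $s_\infty$ of~\eqref{Fix4}, so $s(t){\to}s_\infty$, while $(\ell(t))$ is then slaved to a stable linear equation with limit $\ell_\infty$. A short computation shows that Condition~\eqref{Cond4} is exactly equivalent to $\ell_\infty{>}0$, and since $s_\infty{+}\ell_\infty{=}1{-}\k0q/\kil{<}1$ one verifies that the slow trajectory stays in the open feasible simplex; the good region is therefore never left and the one-interval estimate iterates by induction over $\R_+$. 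Finally, asymptotic stability is immediate: because $\dot s$ does not depend on $\ell$, the Jacobian of~\eqref{ODE4} at $(s_\infty,\ell_\infty)$ is triangular, with diagonal entries $-\kil{<}0$ and, using the fixed-point identity $\krs C_U(\k0q{+}\ksr s_\infty){=}\ksr s_\infty\bigl(\kri(C_M{-}1{+}s_\infty){+}\krs C_U\bigr)$,
\[
\left.\frac{\partial\dot s}{\partial s}\right|_{(s_\infty,\ell_\infty)}={-}\ksr\frac{\kri(C_M{-}1{+}2s_\infty)}{\kri(C_M{-}1{+}s_\infty){+}\krs C_U}<0,
\]
so both eigenvalues are real and negative.

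As throughout the paper, the main obstacle is the first step: controlling the \emph{sample paths} of the fast processes well enough to prove that $\tau_N$ does not collapse to $0$, i.e.\ that $U_N$ genuinely stays at saturation and that the initiation pool $N{-}R_N{-}S_N{-}L_N$ stays of order $N$ over a fixed interval. This is precisely where the coupling with $(\widetilde X_N(t))$ and the pathwise estimates of Propositions~\ref{MMIHit}, \ref{Prop0} and~\ref{Prop1} do the real work; once $\tau_N{\ge}t_0$ is secured, the averaging and the (here particularly clean) stability analysis are routine.
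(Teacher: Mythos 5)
Your proposal is correct and follows essentially the same route as the paper: the same stopping-time/coupling construction, the same identification of the fast process as the feedforward $M/M/\infty$ network of Figure~\ref{FigF4} with product-form invariant law, and the same reduction of the stability question to the autonomous equation for $s$ via the polynomial whose positive root is $s_\infty$. Your explicit triangular-Jacobian computation at $(s_\infty,\ell_\infty)$ is in fact slightly more detailed than the paper's argument, which only records the sign of $P_1$ and the monotonicity of $(s(t))$.
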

\begin{proof}
We denote $(V_N(t)){=}(U^0_N{-}U_N(t))$.   The corresponding SDEs are
\[
\begin{cases}
  \diff R_N(t)&={\cal P}_{\klr}\left(\left(\rule{0mm}{4mm}0,L_N(t{-})V_N(t{-})\right),\diff t\right)
  {+}{\cal P}_{\ksr}\left(\left(\rule{0mm}{4mm}0,S_N(t{-})\right),\diff t\right)\\
&\hspace{1cm}{-}{\cal P}_{\krs}\left(\left(\rule{0mm}{4mm}0,R_N(t{-})U_N(t{-})\right)\diff t\right)\\
 &\hspace{1cm}{-}{\cal P}_{\kri}\left(\left(\rule{0mm}{4mm}0,(M^0_N{-}(N{-}R_N(t{-}){-}S_N(t{-})))R_N(t{-})\right),\diff t\right),\\
  \diff Q_N(t)&={\cal P}_{\k0q}\left(\left(\rule{0mm}{4mm}0,N\right),\diff t\right) \\
 &\hspace{5mm} {-}{\cal P}_{\kqu}\left(\left(\rule{0mm}{4mm}0,Q_N(t{-})(U^0_N{-}V_N(t{-}))\right),\diff t\right)
{-}{\cal P}_{\kq0}\left(\left(\rule{0mm}{4mm}0,Q_N(t{-})\right),\diff t\right),\\
\diff V_N(t)&={\cal P}_{\kqu}\left(\left(\rule{0mm}{4mm}0,Q_N(t{-})(U^0_N{-}V_N(t{-}))\right),\diff t\right)\\
 &\hspace{2cm}{-}{\cal P}_{\klr}\left(\left(\rule{0mm}{4mm}0,L_N(t{-})V_N(t{-})\right),\diff t\right),
\end{cases}
\]
and the SDEs for $(S_N(t),L_N(t))$ are
\[
\begin{cases}
  \diff S_N(t)&={\cal P}_{\krs}\left(\left(\rule{0mm}{4mm}0,R_N(t{-})U_N(t{-})\right),\diff t\right) 
        {-}{\cal P}_{\ksr}\left(\left(\rule{0mm}{4mm}0,S_N(t{-})\right),\diff t\right),\\
  \diff L_N(t)&={\cal P}_{\kil}\left(\left(\rule{0mm}{4mm}0,N{-}R_N(t{-}){-}S_N(t{-}){-}L_N(t{-})\right),\diff t\right) \\
        &\hspace{3cm}{\cal P}_{\klr}\left(\left(\rule{0mm}{4mm}0,L_N(t{-})V_N(t{-})\right),\diff t\right).
\end{cases}
\]
We define
  \[
\tau_N=\inf \left\{t{>}0: \min\left(\frac{S_N(t)}{N},\frac{L_N(t)}{N}\right)\le \eta\text{ or }\max\left(\frac{R_N(t)}{N},\frac{U_N(t)}{N},\frac{V_N(t)}{N}\right)\ge \eta \right\}
\]
The auxiliary process $(\widetilde{X}_N(t)){=}(\widetilde{R}_N(t),\widetilde{L}_N(t),\widetilde{Q}_N(t),\widetilde{V}_N(t))$ is the solution of the SDE
\[
\begin{cases}
    \diff \widetilde{S}_N(t)&={\cal P}_{\krs}\left(\left(\rule{0mm}{4mm}0,\widetilde{R}_N(t{-})\widetilde{U}_N(t{-})\right),\diff t\right) 
        {-}{\cal P}_{\ksr}\left(\left(\rule{0mm}{4mm}0,\widetilde{S}_N(t{-})\right),\diff t\right),\\
        \diff \widetilde{R}_N(t)&={\cal P}_{\klr}\left(\left(\rule{0mm}{4mm}0,N\widetilde{V}_N(t{-})\right),\diff t\right)
  {+}{\cal P}_{\ksr}\left(\left(\rule{0mm}{4mm}0,N\right),\diff t\right)\\
 &\hspace{2cm}{-}{\cal P}_{\kri}\left(\left(\rule{0mm}{4mm}0,(M^0_N{-}N)\widetilde{R}_N(t{-})\right),\diff t\right),\\
  \diff \widetilde{L}_N(t)&={\cal P}_{\kil}\left(\left(\rule{0mm}{4mm}0,(1{-}\eta)N{-}\widetilde{L}_N(t{-})\right),\diff t\right) \\
&\hspace{2cm}{-}{\cal P}_{\klr}\left(\left(\rule{0mm}{4mm}0,\widetilde{L}_N(t{-})\widetilde{V}_N(t{-})\right),\diff t\right),\\
  \diff \widetilde{Q}_N(t)&={\cal P}_{\k0q}\left(\left(\rule{0mm}{4mm}0,N\right),\diff t\right){-}{\cal P}_{\kqu}\left(\left(\rule{0mm}{4mm}0,\widetilde{Q}_N(t{-})(U^0_N{-}\eta N)\right),\diff t\right),\\
\diff \widetilde{V}_N(t)&={\cal P}_{\kqu}\left(\left(\rule{0mm}{4mm}0,\widetilde{Q}_N(t{-})U^0_N\right),\diff t\right){-}{\cal P}_{\klr}\left(\left(\rule{0mm}{4mm}0,\eta N\widetilde{V}_N(t{-})\right),\diff t\right),
\end{cases}
\]
with the initial condition  $(S_N(0),R_N(0),L_N(0),Q_N(0),V_N(0))$.

\[
\dot{s}(t){=}-\frac{P_1(s)}
{((C_M{-}1)\kri + C_U\krs + \kri s)},
\]
with
\[
P_1(s)\steq{def}\kri\ksr s^2+(C_M{-}1)\kri\ksr s  - C_U\k0q\krs,
\]
$P_1$ has two roots,
\[
\frac{1}{2}\left(-(C_M{-}1)\pm\sqrt{(C_M{-}1)^2+4C_U\frac{\k0q}{\kri}\frac{\krs}{\ksr}}\right),
\]
only one, $s_\infty$ is positive, the other is negative. It is easily checked that the second condition of Relation~\eqref{Cond4} implies that $s_\infty{\in}(0,1)$. The equilibrium point $\ell_\infty$ of $(\ell(t))$ is positive if and only if $s_\infty{<}1{-}\k0q/\kil$.  This is equivalent to the relation
$P_1(1{-}\k0q/\kil){>}0$ and, with  trite calculations, this is  the condition
\[
\left(C_M{-}\frac{\k0q}{\kil}\right)\left(1{-} \frac{\k0q}{\kil}\right)-C_U\frac{\k0q}{\kri}\frac{\krs}{\ksr} {>}0,
\]
which is exactly the second condition of Relation~\eqref{Cond4}.

If $s_0{\in}(0,s_\infty)$, resp. if $s_0{\in}(s_\infty,1)$, one gets that $(s(t))$ is non-decreasing, resp. $(s(t))$ is non-increasing.

\end{proof}
\begin{center}
\begin{figure}[H]
\resizebox{10cm}{!}{%
\begin{tikzpicture}[->,node distance=10mm]

\node(M) at (-3,0){};
\node[black, very thick,rectangle,draw](Q) at (-2,0){$Q$};
\node(L0) at (2,-1){};
\node[black, very thick, rectangle,draw](V) at (0,0){$V$};
\node[black, very thick, rectangle,draw](R) at (2,0){$R$};
\node(R0) at (8,0){};

\path (M) edge [black,very thick,left,midway,above] node {$\k0q$} (Q);
\path (Q) edge [black,very thick,left,midway,above] node {$\kqu C_U$} (V);
\path (V) edge [black,very thick,left,midway,above] node {$\klr \ell(t)$} (R);
\path (L0) edge [black,very thick,left,midway] node {$\ksr s(t)$} (R);
\path (R) edge [black,very thick,left,midway,above] node {$\krs C_U{+} \kri(C_M{-}1{+}s(t))$} (R0);
\end{tikzpicture}}
\caption{Under-loaded Regime --- Saturation.\\The $M/M/\infty$ network for fast processes}\label{FigF4}
\end{figure}
\end{center}

\begin{corollary}
  Under the assumptions of Theorem~\ref{Stab-1-Sat}, then for the convergence in distribution
  \[
  \lim_{N\to+\infty}\left(\frac{P_N(t)}{N}\right)=(\k0q t),
  \]
 where $(P_N(t))$ is the process defined by Relation~\eqref{Prott}
\end{corollary}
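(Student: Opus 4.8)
The plan is to follow verbatim the template of Corollaries~\ref{Corol-Stab} and~\ref{Corol-Sub}: replace the counting process $(P_N(t))$ by its compensator and then feed the stochastic averaging already established in Theorem~\ref{Stab-1-Sat} into the resulting integral. By Relation~\eqref{Prott}, $(P_N(t))$ is a point process with stochastic intensity $\klr L_N(s{-})(U^0_N{-}U_N(s{-})){=}\klr L_N(s{-})V_N(s{-})$, where $(V_N(t)){=}(U^0_N{-}U_N(t))$ is the fast coordinate tracked by $\Lambda_N$. Consequently
\[
\left(\frac{P_N(t)}{N}-\klr\int_0^t\frac{L_N(s)}{N}V_N(s)\diff s\right)
\]
is a martingale, and computing its previsible increasing process and applying Doob's inequality exactly as in the proof of Corollary~\ref{Corol-Stab} shows that it converges in distribution to $(0)$. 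It therefore suffices to identify the limit of the compensator.

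For this I would invoke the joint convergence of $((L_N(t)/N),\Lambda_N)$ to $((\ell(t)),\Lambda_\infty)$ from Theorem~\ref{Stab-1-Sat}, the $v$-marginal of $\Lambda_\infty$ at time $s$ being the Poisson distribution $\Pois{\k0q/(\klr\ell(s))}$ of mean $\k0q/(\klr\ell(s))$. The integrand is a product of the slow factor $(L_N(s)/N)$, converging to the deterministic continuous function $(\ell(s))$, and the fast factor $(V_N(s))$, whose time averages are read off $\Lambda_N$. Splitting
\[
\klr\int_0^t\frac{L_N(s)}{N}V_N(s)\diff s
=\klr\int_0^t\ell(s)V_N(s)\diff s
+\klr\int_0^t\left(\frac{L_N(s)}{N}{-}\ell(s)\right)V_N(s)\diff s,
\]
the last term is bounded by $\klr\sup_{s\le t}|L_N(s)/N{-}\ell(s)|\int_0^tV_N(s)\diff s$ and vanishes in distribution, whereas the first term equals $\klr\croc{\Lambda_N,g}$ with $g(s,r,q,v){=}\ind{s\le t}\ell(s)v$ and converges to
\[
\klr\int_0^t\ell(s)\int_{\N}v\,\Pois{\frac{\k0q}{\klr\ell(s)}}(\diff v)\diff s
=\klr\int_0^t\ell(s)\frac{\k0q}{\klr\ell(s)}\diff s
=\k0q t,
\]
which is the announced limit.

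Rather than a genuine obstacle, the one point requiring care is the passage to the limit in the product of the slow factor $(L_N/N)$ and the fast factor $(V_N)$, together with the unbounded test function $h(v){=}v$. The splitting above reduces the product to a single averaged integral against $\Lambda_N$; the unboundedness of $v$ is absorbed by the linear-growth extension of occupation-measure convergence used throughout the paper (as for functions $h{\in}L_1(\pi)$ in Proposition~\ref{Prop0}), and the tightness of $\int_0^tV_N(s)\diff s$ follows from the convergence of $(\Lambda_N)$ in Theorem~\ref{Stab-1-Sat}. One must only record that $(\ell(s))$ remains strictly positive on $[0,t]$, so that $\k0q/(\klr\ell(s))$ is well defined; this is ensured by Theorem~\ref{Stab-1-Sat} itself, since $\ell_0{>}0$ and $(\ell(t))$ converges to the positive equilibrium $\ell_\infty$ of~\eqref{Fix4}.
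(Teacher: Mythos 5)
Your proposal is correct and follows essentially the same route as the paper: the paper's proof of this corollary simply refers back to Corollary~\ref{Corol-Sub}, which replaces $(P_N(t))$ by its compensator via a martingale/Doob argument and then evaluates $\klr\int_0^t \ell(s)\,v$ against the $\Pois{\k0q/(\klr\ell(s))}$ marginal of $\Lambda_\infty$ to obtain $(\k0q t)$, exactly as you do. The additional care you take with the slow--fast product splitting, the linear-growth test function, and the positivity of $(\ell(t))$ fills in details the paper leaves implicit but does not change the argument.
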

\begin{proof}
The proof is similar to the proof of Corollary~\ref{Corol-Sub}.
\end{proof}

\printbibliography

\end{document}